\newtheorem{theorem}{Theorem}[section]
\newtheorem{proposition}[theorem]{Proposition}
\theoremstyle{definition}
\newtheorem{example}[theorem]{Example}
\newtheorem{remark}[theorem]{Remark}
\newtheorem{definition}[theorem]{Definition}
\font\black=cmbx10 \font\sblack=cmbx7 \font\ssblack=cmbx5 \font\blackital=cmmib10  \skewchar\blackital='177
\font\sblackital=cmmib7 \skewchar\sblackital='177 \font\ssblackital=cmmib5 \skewchar\ssblackital='177
\font\sanss=cmss11 \font\ssanss=cmss8 scaled 900 \font\sssanss=cmss8 scaled 600 \font\blackboard=msbm10
\font\sblackboard=msbm7 \font\ssblackboard=msbm5 \font\caligr=eusm10 \font\scaligr=eusm7 \font\sscaligr=eusm5
\font\bsymb=cmsy10 scaled\magstep2
\def\all#1{\setbox0=\hbox{\lower1.5pt\hbox{\bsymb
       \char"38}}\setbox1=\hbox{$_{#1}$} \box0\lower2pt\box1\;}
\def\exi#1{\setbox0=\hbox{\lower1.5pt\hbox{\bsymb \char"39}}
       \setbox1=\hbox{$_{#1}$} \box0\lower2pt\box1\;}
\def\tx#1{{\fam0\relax#1}}
\def\sss#1{{\fam\ssfam\relax#1}}
\def\pmb#1{\setbox0\hbox{${#1}$} \copy0 \kern-\wd0 \kern.2pt \box0}
\def\pmbb#1{\setbox0\hbox{${#1}$} \copy0 \kern-\wd0
      \kern.2pt \copy0 \kern-\wd0 \kern.2pt \box0}
\def\pmbbb#1{\setbox0\hbox{${#1}$} \copy0 \kern-\wd0
      \kern.2pt \copy0 \kern-\wd0 \kern.2pt
    \copy0 \kern-\wd0 \kern.2pt \box0}
\def\pmxb#1{\setbox0\hbox{${#1}$} \copy0 \kern-\wd0
      \kern.2pt \copy0 \kern-\wd0 \kern.2pt
      \copy0 \kern-\wd0 \kern.2pt \copy0 \kern-\wd0 \kern.2pt \box0}
\def\pmxbb#1{\setbox0\hbox{${#1}$} \copy0 \kern-\wd0 \kern.2pt
      \copy0 \kern-\wd0 \kern.2pt
      \copy0 \kern-\wd0 \kern.2pt \copy0 \kern-\wd0 \kern.2pt
      \copy0 \kern-\wd0 \kern.2pt \box0}
\mathchardef\za="710B  %\alpha
\mathchardef\zb="710C  %\beta
\mathchardef\zg="710D  %\gamma
\mathchardef\zd="710E  %\delta
\mathchardef\zve="710F %\epsilon
\mathchardef\zz="7110  %\zeta
\mathchardef\zh="7111  %\eta
\mathchardef\zvy="7112 %\theta
\mathchardef\zi="7113  %\iota
\mathchardef\zk="7114  %\kappa
\mathchardef\zl="7115  %\lambda
\mathchardef\zm="7116  %\mu
\mathchardef\zn="7117  %\nu
\mathchardef\zx="7118  %\xi
\mathchardef\zp="7119  %\pi
\mathchardef\zr="711A  %\rho
\mathchardef\zs="711B  %\sigma
\mathchardef\zt="711C  %\tau
\mathchardef\zu="711D  %\upsilon
\mathchardef\zvf="711E %\phi
\mathchardef\zq="711F  %\chi
\mathchardef\zc="7120  %\psi
\mathchardef\zw="7121  %\omega
\mathchardef\ze="7122  %\varepsilon
\mathchardef\zy="7123  %\vartheta
\mathchardef\zf="7124  %\varomega
\mathchardef\zvr="7125 %\varrho
\mathchardef\zvs="7126 %\varsigma
\mathchardef\zf="7127  %\varphi
\mathchardef\zG="7000  %\Gamma
\mathchardef\zD="7001  %\Delta
\mathchardef\zY="7002  %\Theta
\mathchardef\zL="7003  %\Lambda
\mathchardef\zX="7004  %\Xi
\mathchardef\zP="7005  %\Pi
\mathchardef\zS="7006  %\Sigma
\mathchardef\zU="7007  %\Upsilon
\mathchardef\zF="7008  %\Phi
\mathchardef\zW="700A  %\Omega
\newcommand{\be}{\begin{equation}}
\newcommand{\ee}{\end{equation}}
\newcommand{\raa}{\rightarrow}
\newcommand{\bea}{\begin{eqnarray}}
\newcommand{\eea}{\end{eqnarray}}
\newcommand{\beas}{\begin{eqnarray*}}
\newcommand{\eeas}{\end{eqnarray*}}
\def\*{{\textstyle *}}
\newcommand{\ot}{\otimes}
\newcommand{\pa}{\partial}
\newcommand{\ti}{\times}
\newcommand{\ad}{{\rm ad}}
\newcommand{\Ll}{{\pounds}}
\def\ran{\rangle}
\def\cL{{\mathcal L}}
\def\cR{{\mathcal R}}
\def\cO{{\mathcal O}}
\def\wt{\widetilde}
\def\ul{\underline}
\def\sJ{{\sss J}}
\def\sT{{\sss T}}
\def\sV{{\sss V}}
\def\sj{{\sss j}}
\def\xd{\tx{d}}
\newcommand{\la}{\langle}
\newcommand{\Z}{\mathbb{Z}}
\newcommand{\R}{\mathbb{R}}
\newcommand{\Pe}{\mathbb{P}}
\newcommand{\n}{\nabla}
\newcommand{\op}[1]{\!\!\mathop{\rm ~#1}\nolimits}
\newcommand{\id}{\op{id}}
\def\Rt{{\R^\ti}}
\def\Lt{{L^\ti}}
\def\VB{{\rm V\!B}}
\DeclareMathOperator{\GL}{GL}
\newcommand{\we}{\wedge}
\newcommand{\p}{\mathbf{p}}
\begin{document}
\title{\bf A geometric approach to contact Hamiltonians \\ and contact Hamilton-Jacobi theory}
\date{}
\author{\\ Katarzyna  Grabowska$^1$\\ Janusz Grabowski$^2$ %(Corresponding author)
        \\ \\
         $^1$ {\it Faculty of Physics}\\
                {\it University of Warsaw}\\
                \\$^2$ {\it Institute of Mathematics}\\
                {\it Polish Academy of Sciences}
                }
%\author{Janusz Grabowski}
\maketitle
\begin{abstract}
\noindent We propose a novel approach to contact Hamiltonian mechanics which, in contrast to the one dominating in the literature, serves also for non-trivial contact structures. In this approach Hamiltonians are no longer functions on the contact manifold $M$ itself but sections of a line bundle over $M$ or, equivalently, 1-homogeneous functions on a certain $\GL(1,\R)$-principal bundle $\zt:P\to M$, which is equipped with a homogeneous symplectic form $\zw$. In other words, our understanding of contact geometry is that it is not an `odd-dimensional cousin' of symplectic geometry but rather a part of the latter, namely `homogeneous symplectic geometry'. This understanding of contact structures is much simpler than the traditional one and very effective in applications, reducing the contact Hamiltonian formalism to the standard symplectic picture. We develop in this language contact Hamiltonian mechanics in the autonomous, as well as the time-dependent case, and the corresponding Hamilton-Jacobi theory. Fundamental examples are based on canonical contact structures on  the first jet bundles $\sJ^1L$ of sections of line bundles $L$, which play in contact geometry a fundamental r\^ole, similar to that played by cotangent bundles in symplectic geometry.

\medskip\noindent
{\bf Keywords:} contact structures; symplectic structures; principal bundles; Hamiltonian formalism; Hamilton-Jacobi equations; jet bundles.
\par

\smallskip\noindent
{\bf MSC 2020:} 53D10; 53D35;  35F21; 70H20; 70G45; 70S05.
%\textit{Primary} ; \textit{Secondary} .	
\end{abstract}

%%%%%%%%%%%%%%%%%%%%%%%%%%%%%%%%%%%%%%%%%%%%%%%%%%%%%%%%%%%%%%%%

\section{Introduction}
The roots of contact geometry can be traced back to 1872, when Sophus Lie
introduced the concept of contact transformations to study systems of differential equations.
The contact structures reappeared in Gibbs’ work on thermodynamics, Huygens’ work
on geometric optics, Hamiltonian dynamics, fluid mechanics, etc. As a nice  source for the  history of contact geometry and topology with an extended list of references we recommend \cite{Geiges:2001}, and for comprehensive presentation of contact geometry we refer to classical monographs \cite{Arnold:1989,Geiges:2008,Libermann:1987}.

More recently, there has been a revival of interest in the study of contact dynamics and their geometric properties, for instance in mesoscopic dynamics, thermodynamics (equilibrium and non-equilibrium), statistical mechanics, and study of systems with dissipation. The geometric approach assigns a contact structure to the thermodynamic phase space, such that the dynamics is represented by contact Hamiltonian  vector fields, and Legendre submanifolds describe equilibrium states. Especially, various applications in thermodynamics appeared in numerous papers, to mention Gibbs' book \cite{Gibbs:1948}, papers by Mruga{\l}a and collaborators \cite{Mrugala:1991,Mrugala:2000,Mrugala:1993}, and more recent papers
\cite{Baldiotti:2016,Bravetti:2019,Goto:2015,Grmela:2014,Rajeev:2008,
Simoes:2020,Simoes:2021,Shaft:2018}. In the latest works by Esen and his collaborators \cite{Esen:2022,Esen:2022a} the role of contact geometry in statistical mechanics and
thermodynamics is deeply studied, including fully geometric formulation of the General Equation for Non-Equilibrium Reversible-Irreversible Coupling (GENERIC). The authors discuss many concepts we also deal with in this paper.

Contact geometry has been revealed relevant in the last years also due to its applications to describe mechanical dissipative systems, both in the Hamiltonian and Lagrangian descriptions
(for instance, \cite{deLeon:2020,deLeon:2021,deLeon:2021a,Gaset:2020}), sometimes associated with the  Herglotz' generalized variational principle, but also quantum systems \cite{Ciaglia:2018}.
The standard Hamiltonian formulation describes exclusively isolated systems with reversible dynamics, while real systems are constantly in interaction with the environment, which introduces the phenomena of dissipation and irreversibility. Therefore a major question is whether it is possible to construct a classical mechanical theory that not only contains all the advantages of the Hamiltonian formalism, but also takes into account the effects of the environment on the system.
Hence, a major goal, at least in some areas of physics, is that of finding generalizations of Hamilton equations that apply to systems exchanging energy with the environment.
A lot of papers in this subject deal with contact  Hamiltonian mechanics, e.g. \cite{Bravetti:2017,Bravetti:2017a,deLeon:2017,deLeon:2019}, including contact Hamilton-Jacobi theory \cite{Cannarsa:2019,Cruz:2018,deLeon:2021c,Esen:2021a,Grillo:2020,Rajeev:2008}.

Unfortunately, almost all of them deal with trivial (cooriented) contact structures (a global contact form $\zh$ on a manifold $M$ is given) and are slight modifications of a single (and quite old) idea: the contact  dynamics associated with a `contact Hamiltonian' $H:M\to\R$ is represented by the contact Hamiltonian vector field $X^c_H$ determined uniquely by the equations
\be\label{E3}i_{X^c_{H}}\eta =-H,\qquad i_{X^c_{H}}\xd\eta =\xd H-{\cR_\zh}(H) \eta\,,\ee
where $\mathcal{R}_\zh$ is the so called \emph{Reeb vector field} for $\zh$. The problem is that
this definition of  the contact Hamiltonian vector field is strongly associated with the choice of the contact form  $\zh$ and the corresponding Reeb vector field, and does not make any sense for more general contact structures: equations (\ref{E3}) are not invariant with respect to the choice of $\zh$ among equivalent contact forms. The point is that changing a (local) trivialization we  have to change also the Hamiltonian. In this sense, such an approach does not serve even for trivializable (but not trivial) contact structures. In other words, equations (\ref{E3}) have no geometric sense in non-trivial contact geometry in which contact structures are `maximally-nonintegrable' distributions of hyperplanes on $M$. What is more, such an approach ignores many canonical and very important examples of contact structures which are not trivial, e.g. first jets $\sJ^1(L)$ of non-trivial line bundles or projectivized cotangent bundles $\Pe(\sT^*M)$ which are non-orientable for odd-dimensional $M$. On the other hand, even authors working  exclusively with globally defined contact forms (especially with  \emph{extended cotangent bundles} $\sT^*Q\ti\R$) accept \emph{contact transformations} as preserving the contact form up to a factor, that is a slight inconsequence.

Also the proposed Lagrangian formalisms in this context are usually defined only for regular `contact Lagrangians' or Lagrangians on \emph{extended tangent bundles} $\sT Q\ti\R$, except for some recent attempts \cite{deLeon:2021,Esen:2021,Esen:2021a,Liu:2018} to extend it for singular Lagrangians in the spirit of \emph{Tulczyjew triples}, invented by Tulczyjew \cite{Tulczyjew:1974,Tulczyjew:1977,Tulczyjew:1999} as an effective  geometrical tool to deal with singular Lagrangians. However, the corresponding contact Euler-Lagrange equations have connections to Hamilton-Jacobi theory and variational approach of Herglotz \cite{Herglotz:1930}, who used a generalization of the well-known Hamilton principle, that miraculously provides the same equations that we can obtain using contact geometry. Of course, all the above-mentioned papers form by no means a complete list  of references in the subject. Note that a full contact version of the Tulczyjew triples we proposed recently in \cite{Grabowska:2022a}.

\smallskip
In the present paper we propose a new picture for contact Hamiltonian mechanics and contact Hamilton-Jacobi theory, which is geometrically intrinsic and valid for general contact structures, not necessarily trivial (cooriented). The point is that contact Hamiltonians for a general contact structure on a manifold $M$ are not functions on $M$ but sections of a certain line bundle over $M$ and there is no way in non-trivial cases to associate with them functions (Hamiltonians) on $M$. This understanding of Hamiltonians as sections of line bundles serves actually for more general structures, so called \emph{Jacobi bundles} (but also \emph{local Lie algebras} or \emph{Kirillov manifolds}.

To be more precise, let us recall that in the standard description, a contact structure on a manifold $M$ of  the odd dimension $2n+1$ is viewed as certain `maximally non-integrable' one-codimensional subbundle $C$ in the tangent bundle $\sT M$ (a distribution of hyperplanes). The fibers of this subbundle are called \emph{contact elements} in the traditional language of contact geometry. The subbundle $C$ is locally the kernel of a non-vanishing 1-form $\zh$ on a manifold $M$ which induces a rank-one subbundle in $\sT^* M$. Maximal non-integrability of $C$ is expressed by the condition that the volume form $\zh\we(\xd\zh)^n$ is nowhere vanishing. Such 1-forms are called \emph{contact forms}. Two such forms are viewed \emph{equivalent} if they have the same kernel, i.e., they differ by a factor which is an invertible function; forms equivalent to contact forms are contact forms themselves. If a global contact form determining $C$ is fixed (such a form may not exist), the contact structure we call \emph{trivial} or \emph{cooriented}. Having a trivial contact structure $C$ with the global contact form $\zh$ on $M$, we can construct a symplectic structure $\zw_\zh$ on the manifold $\bar M=\R\ti M$,
$$\zw(s,x)=\xd(e^s\cdot\zh)(s,x)=e^s\cdot\xd s\we\zh(x)+e^s\cdot\xd\zh(x)\,.$$
This symplectic form is 1-homogeneous in the sense that $\Ll_{\pa_s}\zw_\zh=\zw_\zh$.
The symplectic manifold $(\bar M,\zw_\zh)$ is called the \emph{symplectization} of $(M,\zh)$.
For general contact structures symplectizations are generally more sophisticated.
Actually, in this paper the symplectization will be understood as a principal $\GL(1,\R)$-bundle $\zt:P\to M$ equipped with a symplectic form $\zw$ which is 1-homogeneous with respect to the $\GL(1,\R)$-action. Such an object we will call a \emph{symplectic principal $\R^\ti$-bundle}. Throughout the paper, for a vector bundle $E$ we will denote with $E^\ti$ the submanifold in $E$ of non-zero vectors. In particular,  instead of the general linear group in dimension 1, $\GL(1,\R)$, we will write simply $\Rt=\R\setminus\{ 0\}$. Our main observation to start with is the following.
\begin{theorem}
There is a canonical one-to-one correspondence between contact structures $C\subset\sT M$ on a manifold $M$  and symplectic $\Rt$-principal bundles $P$ over $M$. In this correspondence  the symplectic $\Rt$-principal bundle associated with $C$ can be identified with $(C^o)^\ti\subset\sT^*M$, where $C^o$ is the annihilator of $C$.
\end{theorem}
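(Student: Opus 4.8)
The plan is to run both directions of the correspondence through the canonical (Liouville) $1$-form $\theta_M$ of $\sT^*M$.

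\emph{From a contact structure to a symplectic $\Rt$-bundle.} Given a hyperplane field $C\subset\sT M$, its annihilator $C^o\subset\sT^*M$ is a line subbundle, so $P:=(C^o)^\ti$ is, under fibrewise multiplication, a principal $\Rt$-bundle over $M$ with projection $\zt$ the restriction of $\pi\colon\sT^*M\to M$. Put $\theta:=\theta_M|_P$ and $\zw:=\xd\theta$, and let $\nabla$ be the fundamental vector field of the $\Rt$-action, i.e.\ the restriction to $P$ of the Euler vector field of $\sT^*M$. Since $\theta_M$ is $1$-homogeneous and vanishes on vectors tangent to the fibres of $\sT^*M$, one gets $i_\nabla\theta=0$, $\Ll_\nabla\zw=\zw$ and $i_\nabla\zw=\theta$, so $\zw$ is $1$-homogeneous; what must be shown is that $\zw$ is non-degenerate \emph{exactly when} $C$ is a contact structure. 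This I would verify locally: a local nowhere-vanishing section $\zh$ of $C^o$ (a local contact form) trivialises $P\cong\Rt\times U$ via $(t,x)\mapsto t\,\zh_x$, and in this chart $\theta$ becomes $t\,\zh$ and $\zw$ becomes $\xd t\we\zh+t\,\xd\zh$; since $(\xd\zh)^{n+1}=0$ on the $(2n{+}1)$-dimensional $U$, one computes $\zw^{\,n+1}=(n{+}1)\,t^{\,n}\,\xd t\we\zh\we(\xd\zh)^n$, which is nowhere zero on $\Rt\times U$ precisely when $\zh\we(\xd\zh)^n$ is nowhere zero on $U$, i.e.\ precisely when $C$ is maximally non-integrable. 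Hence $((C^o)^\ti,\zw)$ is a symplectic $\Rt$-principal bundle.

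\emph{From a symplectic $\Rt$-bundle to a contact structure.} Conversely, let $(\zt\colon P\to M,\zw)$ be a symplectic $\Rt$-principal bundle with fundamental vector field $\nabla$, so $\zw$ is $1$-homogeneous; in particular $\Ll_\nabla\zw=\zw$. Set $\theta:=i_\nabla\zw$; Cartan's formula together with $\xd\zw=0$ gives $\xd\theta=\zw$, while $i_\nabla\theta=i_\nabla i_\nabla\zw=0$ and $1$-homogeneity yields $h_t^*\theta=t\,\theta$ for the action of $t\in\Rt$. Note $\theta$ is nowhere zero: $\theta_p=0$ would force $\nabla_p\in\ker\zw_p=\{0\}$, impossible for a free action. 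Since $i_\nabla\theta=0$ and $\nabla$ spans the $1$-dimensional vertical subbundle $\ker(\sT\zt)$, the prescription $\langle\iota(p),\sT_p\zt(v)\rangle:=\theta_p(v)$ well-defines a bundle map $\iota\colon P\to\sT^*M$ over $\id_M$, which satisfies $\iota^*\theta_M=\theta$ by the very definition of $\theta_M$; using $h_t^*\theta=t\,\theta$ and $\zt\circ h_t=\zt$ one checks $\iota(t\cdot p)=t\cdot\iota(p)$, so $\iota$ is $\Rt$-equivariant. As $\theta$ never vanishes, $\iota$ takes values in non-zero covectors and is injective on each fibre (there it is $t\mapsto t\cdot\iota(p)$), hence globally injective; for any local section $s$ of $P$ the covectors $x\mapsto(\iota\circ s)(x)$ are smooth, nowhere zero and, by equivariance, span a line subbundle $\ell\subset\sT^*M$ independent of $s$, and in the trivialisations these sections induce $\iota$ reads as the identity --- so $\iota$ is a diffeomorphism of $P$ onto $\ell^\ti$.

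\emph{Gluing the two directions.} Put $C:=\ell^o\subset\sT M$, a field of hyperplanes with $C^o=\ell$. Then $\iota$ becomes an isomorphism $P\xrightarrow{\ \sim\ }(C^o)^\ti$ of principal $\Rt$-bundles with $\iota^*\theta_M=\theta$, whence $\iota^*(\xd\theta_M)=\zw$: $\iota$ is a symplectomorphism, so $\xd\theta_M|_{(C^o)^\ti}$ is symplectic, and the local computation of the first part then forces $C$ to be maximally non-integrable, i.e.\ a contact structure. The two assignments are mutually inverse: starting from $C$ one has $\theta=\theta_M|_{(C^o)^\ti}$ and the tautological property of $\theta_M$ makes the associated map $\iota$ the inclusion $(C^o)^\ti\hookrightarrow\sT^*M$, so the recovered distribution is $\ell^o=(C^o)^o=C$; starting from $(P,\zw)$, the map $\iota$ built above is exactly the asserted identification of $P$ with $(C^o)^\ti$. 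I expect the delicate part to be the converse construction --- showing that $\iota$ is well defined, $\Rt$-equivariant, and a diffeomorphism onto a subbundle --- which is where $1$-homogeneity is genuinely used and where one must also be careful about the two connected components of $\Rt=\GL(1,\R)$ and about fixing the correct notion of isomorphism of symplectic $\Rt$-principal bundles so that ``canonical one-to-one correspondence'' is meaningful; by contrast, the non-degeneracy computation relating maximal non-integrability to symplecticity is entirely routine once the local trivialisation is in place.
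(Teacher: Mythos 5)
Your proposal is correct and follows essentially the same route as the paper: both directions hinge on contracting the homogeneous symplectic form with the Euler vector field to obtain a semi-basic, $1$-homogeneous $1$-form that defines the embedding of $P$ onto $(C^o)^\ti\subset\sT^*M$ (the paper's map $\Psi(t,x)=t\cdot\za(x)$ is your $\iota$), combined with the local computation $\zw^{n+1}=(n+1)t^n\,\xd t\we\zh\we(\xd\zh)^n$ equating non-degeneracy with maximal non-integrability. The only cosmetic difference is that you invoke the tautological property of the Liouville form where the paper works with the local decomposition $\zw=\xd t\we\za+\zw'$ and appeals to its earlier equivalence theorem.
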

\noindent In consequence of the homogeneity of the symplectic form, 1-homogeneous functions on $P$ are closed with respect to the Poisson bracket $\{\,\cdot,\cdot\,\}_\zw$ associated with the symplectic form $\zw$. On the other hand, 1-homogeneous functions on $P$ may be identified with sections $\zs$ of a certain line bundle (vector bundle of rank 1) $\zt^*_P:L^*_P\to M$ and the Poisson bracket $\{\,\cdot,\cdot\,\}_\zw$ induces a \emph{Jacobi bracket} $\{\zs,\zs'\}^J_\zw$ on sections of $L^*_P$ which makes the whole structure a \emph{local Lie algebra} in the sense of Kirillov \cite{Kirillov:1976}, a \emph{Jacobi bundle} in the sense of Marle \cite{Marle:1991}, or a \emph{Kirillov manifold (Kirillov bracket)} \cite{Bruce:2017,Grabowski:2013}. In our terminology, the latter objects are \emph{Poisson $\Rt$-principal bundles}, and the Hamiltonian geometry can be naturally extended to them. A more general algebraic treatment of Lie brackets of these types one can find in \cite{Grabowski:2003a,Grabowski:2013a}. Unfortunately, this nice geometrically intrinsic approach is nearly not present in the literature on contact  Hamiltonian mechanics.
In our contact case, the function on $P$ associated with a section $\zs$ we will denote $\zi_\zs$.
Passing to a (local) trivialization $P=\Rt\ti M$ of the principal bundle $P$ we determine a (local) contact form $\zh$ generating the corresponding contact structure, for which $\zw=\xd s\we\zh+s\cdot\xd\zh$. Note that the use of the multiplicative group $\Rt$ (which is non-connected) instead of the additive group $\R$ is crucial for including non-trivial contact structures into the picture.\newline

Elements of this concept of the symplectization of a contact structure with concrete $(P,\zw)$ one may find in the literature \cite{Arnold:1989,Libermann:1987}. However, we decided to draw full consequences of these constructions and regard the symplectizations themselves as genuine contact structures \cite{Bruce:2017,Grabowski:2013}. In other words, contact structures on $M$ are in our setting just abstract symplectic principal $\R^\ti$-bundles $\zt:P\to M$. In  this sense, contact geometry is not an `odd dimensional version' of symplectic geometry but rather `homogeneous symplectic geometry'. This approach is very simple and, as we will show in this paper, has many advantages with respect to the standard treatment of contact manifolds. For instance, the celebrated contact (Legendre) bracket of functions on $M$ induced by a global contact form is just the symplectic Poisson bracket $\{\,\cdot,\cdot\,\}_\zw$ reduced to 1-homogeneous functions, Legendre submanifolds in $M$ can be interpreted just as $\Rt$-invariant Lagrangian submanifolds in $(P,\zw)$, and contact Hamiltonians $H$ are 1-homogeneous functions on $P$ (equivalently: sections of the line bundle $L^*_P$). The \emph{clou} is that the Hamiltonian vector fields $X_{\zi_\zs}$ associated with a 1-homogeneous Hamiltonians $H=\zi_\zs$ on the symplectic manifold $(P,\zm)$ are projectable onto $M$, so that it is precisely $X^c_\zs=\zt_*(X_{\zi_\zs})$ which is the contact Hamiltonian vector field on $M$. Only for trivial contact structures $P=\Rt\ti M$, 1-homogeneous Hamiltonians on $P$ are of the form $H(s,x)=s\cdot \ul{H}(x)$, where $\ul{H}$ is a function  (`contact Hamiltonian') on $M$, and we recover the contact dynamics (\ref{E3}). The nonautonomous (time-dependent) version of contact dynamics is based on the extension $\bar M=M\ti\sT^*\R$ of the contact manifold and the  corresponding extension $\bar P=P\ti\sT^*\R$ of the $\Rt$-principal bundle $P$, supplemented by the extension of the symplectic form $\bar\zw=\zw+\zw_{\R}$, where $\zw_{\R}=\xd t\we\xd  p$ is the canonical symplectic form on $\sT^*\R$. To  1-homogeneous time-dependent Hamiltonians $H$ on $P$  we associate homogeneous autonomous Hamiltonians $\bar H$ on $\bar  P$  and  proceed as in the autonomous case.
The starting point for the contact Hamilton-Jacobi theory is then the observation that the contact Hamiltonian vector field $X^c_\zs$ is tangent to a Legendre submanifold $\cL_0$ in $M$ if and only if the section $\zs$ vanishes on $\cL_0$.

\medskip
The fundamental models of contact structures are provided by the first jet bundles $\sJ^1(L)\to Q$ of sections of line bundles $L\to Q$: the canonical contact structure of $\sJ^1(L^*)$ is represented by $P=\sT^*(L^\ti)$, where $L^\ti$ is the $\Rt$-principal bundle of non-zero vectors in $L=(L^*)^*$. Such a contact structure is cooriented if and only if $L$ is trivial, $L=\R\ti Q$.
In \cite{Grabowski:2013} it was shown that any \emph{linear} contact structure is equivalent to the one of these canonical contact structures. This is a contact analog of the well-known fact that every linear symplectic structure, i.e., a 1-homogeneous symplectic structure on a vector bundle $E\to M$, is equivalent to the canonical symplectic structure on $\sT^*M$.

In the case $P=\sT^*(L^\ti)$, the 1-homogeneous Hamiltonians are interpreted as sections of the line  bundle $\sJ^1(L^*)\ti_QL^*$ over $\sJ^1(L^*)$, i.e., maps $\zs:\sJ^1(L^*)\to L^*$ covering the identity on $Q$.  Canonical  Legendre submanifolds in $\sJ^1(L^*)$ are of the  form $\cL_0(S)=\sj^1(S)(Q)$, where $S:Q\to L^*$ is a  section of $L^*$ and $\sj^1(S):Q\to\sJ^1(L^*)$ is the corresponding section of the jet bundle $\sJ^1(L^*)$.  The corresponding  Lagrangian  submanifolds  in  $P=\sT^*(\Lt)$ are of the standard  form $\cL(S)=(\xd\,\zi_S)(\sJ^1(L^*))$. The canonical projection $\sJ^1(L^*)\to Q$ maps diffeomorphically the Legendre submanifold $\cL_0(S)$ onto  $Q$, and  hence  the contact Hamiltonian vector field $X^c_\zs$ restricted to $\cL_0(S)$  onto a vector field $\ul{X}^c_\zs$  on $Q$. The  contact Jacobi Theorem states that the vector  fields  $\ul{X}^c_\zs$ and $X^c_\zs$  are $\sj^1(S)$-related if and only if $\zs$  vanishes on  $\cL_0(S)$. A compact  form of the \emph{contact Hamilton-Jacobi equation} is then the PDE
$$\zs\circ\sj^1(S)=0$$
in the autonomous case, and
$$\zs\bigg(\sj^1(S_t),t\bigg)+\frac{\pa S}{\pa t}=0$$
for time-depending sections. For particular Hamiltonians we recover some versions of Hamilton-Jacobi equations present already  in the literature, e.g. \emph{discounted Hamilton-Jacobi equations} \cite{Cannarsa:2020,Jin:2000}.

To sum up, in our paper we develop a geometrically intrinsic model for contact Hamiltonian dynamics, which generalizes majority of approaches to contact Hamiltonian mechanics present in the literature for the case of non-trivial contact structures. It is based on the understanding of contact structures as certain homogeneous symplectic structures, so that almost all symplectic methods present in the standard Hamiltonian mechanics can be easily translated to the contact case. In consequence, topological problems (like cohomology) become important in this setting, that agrees with recent trends in Mathematical Physics. We do not consider particular questions in Physics, providing just a tool for a mathematical formulation of such questions, so that physicists can freely choose a  proper Hamiltonian for their problems in Physics, which needs of course, as usual, a mathematical interpretation of physical quantities.

\medskip The structure of the paper is the following. In the next section we present relationships between  line and $\Rt$-principal bundles, which play a relevant r\^ole in the sequel. Section 3 contains a discussion about various concepts of a contact structure and some crucial examples, while the next two sections are devoted to the development of contact Hamiltonian formalism in the autonomous (Section 4), as well as in the time-dependent case (Section 5). The latter section contains Example \ref{FE} in which you can trace all subtleties of our approach. The corresponding contact Hamilton-Jacobi theory one can find in Section 6. We end up with concluding remarks containing perspectives of our further activity in the subject.

\section{Line and $\R^\ti$-principal bundles}
The well-known construction of vector bundles $E$ of rank $n$ over a manifold $M$ as associated with  $\GL(n,\R)$-principal bundles $P$ over $M$ and a $\GL(n,\R)$-action on $\R^n$ are particularly simple in the case $n=1$. The rank $1$ vector bundles we will call \emph{line bundles}. For any line bundle $\zt_0:L\to M$ the manifold $L^\ti=L\setminus 0_M$, i.e., the open-dense subset of $L$ remaining after the removing points of the zero-section (identified often with just $M$), is canonically a $\GL(1,\R)$-principal bundle. The action is just the multiplication of non-zero vectors of $L$ by non-zero reals. We will write consequently $\R^\ti$ for $\GL(1,\R)=\R\setminus\{ 0\}$, so that $P=L^\ti$ is canonically an $\Rt$-principal bundle over $M$ with the $\Rt$-action
$$h:\Rt\ti L^\ti\to L^\ti\,,\quad h_s(v)=h(s,v)=s\cdot v\,,$$
for any non-zero vector $v\in L$. The line bundle $L_P$ over $M$, associated with an $\Rt$-principal bundle $\zt:P\to M$ with the $\R^\ti$-action $h_s$, is the set of orbits (cosets) of the $\Rt$-principal bundle $P\ti\R$ (and simultaneously a vector bundle over $P$) with the action $\tilde h_s(v_x,a)=(s\cdot v_x,s^{-1}\cdot a)$.
It is easy to see that $L_{L^\ti}=L$, so that from the principal bundle $P=L^\ti$ we reconstruct $L$ as the associated vector bundle. Indeed, it is easy to see that the map
$$\zf:L^\ti\ti\R\to L\,,\quad \zf(v_x,a)=a\cdot v_x$$
is constant on orbits of $\tilde h$ and yields the identification $L_{L^\ti}\simeq L$.
The dual bundle $\zt_0:L^*\to M$ can be, in turn, identified with the vector bundle \emph{coassociated} with $P=L^\ti$, i.e., the vector bundle of orbits of the $\Rt$-action $\tilde h^*$ on $P\ti\R$,
$$\tilde h^*_s(v_x,a^*)=(s\cdot v_x,s\cdot a^*)\,.$$
The canonical pairing between cosets $[(v_x,a)]\in (L_P)_x$ and $[(v_x,a^*)]\in (L^*_P)_x$ gives $a\,a^*\in\R$.

\medskip
It is well known that $G$-principal bundles $\zt:P\to M$ are determined by their transformation data. More precisely, any family of local trivializations $\zf_\za:\zt^{-1}(U_\za)\to U_\za\ti  G$, associated with an open covering $\{ U_\za\}$ of $M$, induces transition functions $F_{\za\zb}:U_\za\cap U_\zb\to G$ such that on
$$\zf_\za\circ\zf_\zb^{-1}:(U_\za\cap U_\zb)\ti G\to(U_\za\cap U_\zb)\ti G\,,\quad \zf_\za\circ\zf_\zb^{-1}(x,g)=(x,F_{\za\zb}(x)\cdot g)\,,$$
and the collection $\{F_{\za\zb}\}$ (defining a \v{C}ech 1-cocycle) completely determines $P$. Note that the same transitions functions serve for the corresponding local trivializations of the associated vector bundle. An important observation in the case of $\Rt$-principal bundles (or line bundles) is that we can always find trivializations with the transition functions taking only values $\pm 1$. Indeed, a part of the $\Rt$-action on $P$ is a $\Z_2$-action of multiplication by $\pm 1$. The quotient $P'=P/\Z_2$ is clearly a principal bundle over $M$ with the principal action of the multiplicative group $\R_{>0}$ of positive reals (isomorphic to the additive group of reals \emph{via} the exponential map). But the fibers of $P'$ are diffeomorphic to $\R$, so contractible, and the corresponding topological result states that in this case the bundle admits a global section $\zs:M\to P/\Z_2$. Locally,  $\zs\,\big|_{U_\za}$ can be identified with a set of two (non-ordered pair of)  local sections $\zs_\za,\zs'_\za$ of $P$ which differ by sign. This family of local sections defines a family of local trivializations of $P$ such that the transition functions take only the values $\pm 1$.
Of course, the same is valid for line bundles, so that we get the following (cf. \cite{Grabowski:2006,Grabowski:2012a}).
\begin{theorem}\label{atlas}
For any line bundle $\zt_0:L\to M$ we can find an atlas of local trivializations
$U_\za\ti\R$ with the transition functions
$$\zf_{\za\zb}:(U_\za\cap U_\zb)\ti\R\to (U_\za\cap U_\zb)\ti\R\,,\quad
\zf_{\za\zb}(x,a)=(x,\pm a)\,.$$
\end{theorem}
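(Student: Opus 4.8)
The plan is to reduce the structure group of $L$ from $\Rt=\GL(1,\R)$ to the subgroup $\{\pm 1\}\cong\Z_2$, exploiting the splitting $\Rt\cong\Z_2\ti\R_{>0}$ in which the factor $\R_{>0}\cong\R$ is contractible. First I would pass to the associated $\Rt$-principal bundle $P=L^\ti$ of nonzero vectors of $L$ (as in Section~2) and quotient by the residual $\Z_2$-action of multiplication by $\pm 1$, forming $P':=P/\Z_2$. Since $\Z_2$ is a normal subgroup of the abelian group $\Rt$, the projection $P'\to M$ is again a principal bundle, now with structure group $\Rt/\Z_2\cong\R_{>0}$, while $P\to P'$ is a $\Z_2$-principal bundle, i.e. a double covering.

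The substantive input is that $P'\to M$ is trivial: after a local trivialization its fibres are diffeomorphic to $\R_{>0}$, hence convex and contractible, so the usual partition-of-unity argument produces a global section $\zs:M\to P'$, and a principal bundle admitting a global section is trivial. I expect this to be the only genuinely non-formal point, the remainder being bookkeeping with the $\pm 1$ ambiguity; to control the latter I would fix once and for all a \emph{good} cover $\{U_\za\}$ of $M$, so that each $U_\za$ and each nonempty intersection $U_\za\cap U_\zb$ is contractible, in particular connected and simply connected. Over each $U_\za$ the $\Z_2$-bundle $\zs^\*(P\to P')$, restricted to $U_\za$, is a double cover of a simply connected set, hence trivial, and therefore splits into an unordered pair $\{\zs_\za,-\zs_\za\}$ of local lifts, i.e. of nowhere-vanishing local sections of $P=L^\ti$. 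Each $\zs_\za$ is a nowhere-vanishing section of $L$ over $U_\za$ and so determines a local trivialization $\zf_\za:\zt_0^{-1}(U_\za)\to U_\za\ti\R$ whose inverse sends $(x,a)\mapsto a\cdot\zs_\za(x)$.

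Finally I would read off the transition functions. On an overlap $U_\za\cap U_\zb$ the sections $\zs_\za(x)$ and $\zs_\zb(x)$ both lie over the same point $\zs(x)\in P'$, i.e. in the single $\Z_2$-orbit $\{\zs_\za(x),-\zs_\za(x)\}$, whence $\zs_\zb=\eps_{\za\zb}\cdot\zs_\za$ for some function $\eps_{\za\zb}:U_\za\cap U_\zb\to\{\pm 1\}$. As the ratio of two sections of a line bundle, $\eps_{\za\zb}$ is continuous, hence locally constant, hence constant because $U_\za\cap U_\zb$ is connected; therefore the change of trivialization is $\zf_{\za\zb}(x,a)=(x,\eps_{\za\zb}\,a)=(x,\pm a)$, exactly the asserted form. (The same argument applies verbatim with an abstract $\Rt$-principal bundle in place of $L^\ti$, and it can also be phrased cohomologically as the statement that $\Z_2\hookrightarrow\Rt$ induces a bijection on isomorphism classes of principal bundles, the quotient $\R_{>0}$ giving a soft coefficient sheaf.)
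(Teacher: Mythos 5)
Your proof is correct and follows essentially the same route as the paper: quotient $L^\ti$ by the $\Z_2\subset\Rt$ action of multiplication by $\pm 1$ to obtain an $\R_{>0}$-principal bundle, which admits a global section because its fibres are contractible, and then lift that section locally to an unordered pair of sections of $L^\ti$ differing by sign. The only addition is your use of a good cover to make the signs $\eps_{\za\zb}$ constant on overlaps, a harmless refinement the paper does not insist on, since the statement only requires the transition functions to take values in $\{\pm 1\}$.
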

The above theorem has important consequences and applications. For instance, it follows immediately that line bundles are classified by the cohomology group $H^1(M;\Z_2)$. Second, every manifold has a nowhere vanishing 1-density (cf. \cite{Grabowski:2006,Grabowski:2012}). Third, as the transition functions for the dual $L^*$ of a line bundle $L$ are algebraic inverses of the transitions functions for $L$, the bundles $L$ and $L^*$ are (non-canonically) isomorphic. Since
$L^*$ is the inverse of $L$ in the sense of K-theory, i.e., $L\ot_M L^*$ is a trivial line bundle $M\ti\R$ with the trivialization $\psi:L\ot_ML^*\to M\ti\R$ given by
$$\psi(a_x\ot a^*_x)=(x,\la a_x,a^*_x\ran)\,,$$
the line bundle $L\ot_ML$ is trivializable for any line bundle $\zt_0:L\to M$.

\medskip
It was observed in \cite{Grabowski:2009} that vector bundles are completely determined by their Euler (called also Liouville) vector fields (equivalently, by the multiplication by reals), so that lifting the Euler vector fields from a vector bundle $E\to M$ to the tangent $\sT E$ and the cotangent bundle $\sT^*E$ provides double vector bundle structures, where the second vector bundle structures are those of $\sT E\to\sT M$ and $\sT^*E\to E^*$.
Note that \emph{double vector bundles} are interesting geometric objects (see e.g. \cite{Konieczna:1999, Pradines:1974}) with a particular interest in geometrical mechanics; they allow for a natural generalization of the Hamiltonian and Lagrangian formalisms to the so called mechanics on Lie algebroids \cite{Grabowska:2006,Grabowska:2008}.
In terms of the multiplication by reals $h_s(v)=s\cdot v$, where $s\in\R$ (which is a particular action on $E$ of the multiplicative monoid $(\R,\cdot)$), the lifted multiplications by reals are
\cite{Bruce:2017,Grabowski:2009,Grabowski:2013}
\be\label{lifts}(\sT h)_s=\sT h_s\quad\text{and}\quad \hat h_s=(\sT^* h)_s=s\cdot(\sT h_{s^{-1}})^*\,,\ee
respectively (the latter makes sense even for $s=0$). In other words,
for $\za_{v_q}\in\sT^*_{v_q}(L^\ti)$ we have $\hat h_s(\za_{v_q})=s\cdot\za_{s\cdot v_q}$, where $v_q$ is clearly an element of the fibre $L_q$. Note that the phase lift  $\sT^*h$ is not the standard lift of a group action  to the cotangent bundle, that is true for the tangent lift. The Euler vector field of the vector bundle structure $\sT E\to\sT M$ is the \emph{complete} tangent lift of the Euler vector field $\n_E$ of $E$. Actually, we can lift to the tangent bundle $\sT M$ all tensor fields on $M$ \cite{Yano:1966,Yano:1973}, in particular symplectic forms and Poisson structures \cite{Grabowski:1995}.

All this can be generalized (cf. \cite{Bruce:2016,Bruce:2017}) to arbitrary actions of the monoid $(\R,\cdot)$ of multiplicative reals on a manifold $F$ which were studied and called \emph{homogeneity bundles} in \cite{Grabowski:2012}. Of course, the formulae (\ref{lifts}) can be restricted to $s\ne 0$ which produces \emph{tangent} and \emph{phase lifts} of $\Rt$-actions, as defined in \cite{Grabowski:2013}.
\begin{example}\label{pl}
For a trivial $\Rt$-principal bundle $P=\Rt\ti M$ (or for a local trivialization of an arbitrary $\Rt$-principal bundle $\zt:P\to M$) with coordinates $(s,x^i)$ and the $\Rt$-action $h_{s'}(s,x^i)=(s'\cdot s,x^i)$, the tangent lift $\sT h$ is the $\Rt$-action on $\sT P=\sT\Rt\ti \sT M$ which in the adapted coordinates $(s,\dot s,x^i,\dot x^j)$ looks like
$$(\sT h)_{s'}(s,\dot s,x^i,\dot x^j)=(s'\cdot s,s'\cdot\dot s,x^i,\dot x^j)\,.$$
The corresponding base manifold is the \emph{Atiyah algebroid} of $\sT P$ which can be identified with $\R\ti\sT M$.
Similarly, the phase lift $\hat h=\sT^*h$ is the $\Rt$-action on $\sT^*\Rt\ti\sT^*M$ given it the adapted coordinates by
$$\hat h_{s'}(s,p,x^i,p_j)=(s'\cdot s,p,x^i,s'\cdot p_j)\,.$$
In this case, the base manifold can be identified with the bundle $\sJ^1(M\ti\R^*)\simeq \sT^*M\ti\R^*$ of first jets of sections of the (trivial in this case) line bundle $M\ti\R^*\to M$. Both identifications of the base manifolds serve also in nontrivial cases (see the next paragraph) and are crucial for understanding of contact Hamiltonian and Lagrangian dynamics.
\end{example}
From the above example it is clear that the lifted $\R^\ti$-actions are again principal, so that we obtain the lifted $\Rt$-principal bundles. In the case of the tangent lift, $\sT P$ is an $\Rt$-principal bundle over the so called \emph{Atiyah algebroid} of $P$. In the case of the phase lift, $\sT^*P$ is an $\Rt$-principal bundle over $\sJ^1(L^*_P)$, i.e., the bundle of first jets of sections of the line bundle $L^*_P\to M$ (cf. \cite{Grabowski:2013}). We will study closer the latter case in the next section. The tangent lift is related to the contact Lagrangian formalism and will not be discussed in this paper.\newline

If $F:P\to\R$ is a function on an $\Rt$-principal bundle $\zt:P\to M$ with the $\Rt$-action $h$, and $k\in\Z$, then we say that $F$ is \emph{$k$-homogeneous} (or \emph{homogeneous of degree (weight) $k$}) if $F\circ h_s=s^k\cdot F$. A similar definition serves for $r$-forms $\zw$ and vector fields $X$ on $P$:
$$h^*_s(\zw)=s^k\cdot\zw\quad\text{and}\quad (h_s)_*(x)=s^k\cdot X\,.$$
Denote with $\n_P$ the vector field on $P$ which is the generator of the one-parameter group of diffeomorphisms $t\mapsto h_{e^t}$. If $P\,\big|_{U_\za}\simeq \R^\ti\ti U_\za$ is a local trivialization of $P$, then $h_{s'}(s,x)=(s'\,s,x)$ and $\n_P=s\,\pa_s$. In other words, $\n_P$ is the opposite of the fundamental vector field of the $\R^\ti$-action. An $r$-form $\zw$ (resp., a vector field $X$) on $P$ is $k$-homogeneous if and only if $\Ll_{\n_P}\zw=k\cdot \zw$ (resp.,
$\Ll_{\n_P}X=[\n_p,X]=k\cdot X$) and $(h_{-1})^*\zw=(-1)^k\,\zw$ (resp., $(h_{-1})_*X=(-1)^k\,X$).

The fundamental concept we will deal with in this paper is the following (cf. \cite{Bruce:2017,Grabowski:2013}).
\begin{definition}
A \emph{symplectic $\Rt$-principal bundle} is an $\Rt$-principal bundle $\zt:P\to M$ with the $\Rt$-action $h:\Rt\ti P\to P$, equipped additionally with a 1-homogeneous symplectic form $\zw$ on $P$. The whole structure we will denote $(P,\zt,M,h,\zw)$.
\end{definition}
\begin{example}\label{ex1}
Let $\zt_0:L\to Q$ be a line bundle. Then $P=\sT^*(\Lt)$ is a symplectic $\Rt$-principal bundle with the $\Rt$ action being the phase lift of the multiplication by non-zero reals on $\Lt$ and the canonical symplectic form $\zw_\Lt$ on $\sT^*\Lt$. Indeed, it suffices to check it for trivial line bundles $L=\R\ti Q$. Let $(s,q^i)$ be the corresponding local coordinates and $(s,p,q^i,p_j)$ be the adapted local coordinates on $\sT^*\Lt$. The lifted action is $s'.(s,p,q^i,p_j)=(s'\cdot s,p,q^i,s'\cdot p_j)$, so that the coordinates $p,q^i$ are homogeneous of degree 0, and $s,p_j$ are homogeneous of degree 1. Then the canonical symplectic form
$$\zw_\Lt=\xd s\we\xd p+\xd q^i\we\xd p_i$$
is clearly 1-homogeneous.
\end{example}
\begin{remark}
Note that the 1-homogeneous symplectic form $\zw$ is always exact. Indeed, as $\Ll_{\n_P}\zw=\zw$, we have $\zw=\Ll_{\n_P}\zw=\xd\, i_{\n_P}\zw$. Symplectic $\Rt$-principal bundles $(P,\zw)$ are particular \emph{symplectic Liouville manifolds} in the terminology of \cite{Libermann:1987}, and the vector field $\n_P$ is a particular \emph{Liouville vector field} there. However, the degree of homogeneity of a vector field on an $\Rt$-principal bundle is in \cite{Libermann:1987} different from ours.
\end{remark}
For any function on an $\Rt$-principal bundle $P$ (Hamiltonian) we associate the Hamiltonian vector field $X_H$ in the standard way, $i_{X_H}\zw=\xd H$.
\begin{proposition}\label{prop1}
On any $\Rt$-principal bundle $(P,\zt,M,h,\zw)$, 1-homogeneous functions are closed with respect to the Poisson bracket $\{ H,H'\}_\zw$ associated with the symplectic form $\zw$ (called sometimes the \emph{Lagrange bracket}). As 1-homogeneous functions are of the form $H=\zi_\zs$ for $\zs$ being sections of the line bundle $L^*_P$, this Poisson bracket induces a \emph{Jacobi bracket}
$\{\zs,\zs'\}^J_\zw$ on the $C^\infty(M)$-module of sections of $L^*_P$ defined by
$$\zi_{\{\zs,\zs'\}^J_\zw}=\{\zi_\zs,\zi_{\zs'}\}_\zw\,.$$
\end{proposition}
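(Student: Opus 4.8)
The plan is to exploit the homogeneity of $\zw$ to show that the Poisson bracket of two $1$-homogeneous functions is again $1$-homogeneous, and then to transport this bracket to sections of $L^*_P$ via the identification $\zs\mapsto\zi_\zs$, checking that the resulting operation on sections satisfies the axioms of a Jacobi bracket (a Lie bracket which is a first-order bidifferential operator in each argument).

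\emph{Step 1: Homogeneity of Hamiltonian vector fields.} First I would establish that if $H$ is $1$-homogeneous, then $X_H$ is $0$-homogeneous, i.e.\ $(h_s)_*X_H=X_H$, equivalently $[\n_P,X_H]=0$. This follows by applying $\Ll_{\n_P}$ to the defining equation $i_{X_H}\zw=\xd H$: since $\Ll_{\n_P}$ commutes with $\xd$ and $\Ll_{\n_P}\zw=\zw$, $\Ll_{\n_P}(\xd H)=\xd(\Ll_{\n_P}H)=\xd H$ (as $H$ is $1$-homogeneous), while $\Ll_{\n_P}(i_{X_H}\zw)=i_{[\n_P,X_H]}\zw+i_{X_H}\Ll_{\n_P}\zw=i_{[\n_P,X_H]}\zw+i_{X_H}\zw$. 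Comparing, $i_{[\n_P,X_H]}\zw=0$, and nondegeneracy of $\zw$ forces $[\n_P,X_H]=0$.

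\emph{Step 2: Closure of $1$-homogeneous functions under the bracket.} Now $\{H,H'\}_\zw = X_H(H')$, and I compute $\Ll_{\n_P}\{H,H'\}_\zw = \n_P\big(X_H(H')\big) = [\n_P,X_H](H') + X_H\big(\n_P(H')\big) = 0 + X_H(H') = \{H,H'\}_\zw$, using Step 1 and the $1$-homogeneity of $H'$. The parity condition $(h_{-1})^*\{H,H'\}_\zw=-\{H,H'\}_\zw$ follows similarly, since $(h_{-1})^*$ is an algebra automorphism intertwining the Poisson brackets and sends $H\mapsto -H$, $H'\mapsto -H'$, so $\{H,H'\}_\zw\mapsto (-1)(-1)(-1)^{-1}$-type bookkeeping — more cleanly, $(h_{-1})^*$ preserves $\zw$ hence the Poisson bracket, and acts as $-1$ on each factor, giving the sign $(-1)\cdot(-1)=1$ on the product but an extra $-1$ from the one derivative; alternatively one just records that both homogeneity conditions ($\Ll_{\n_P}$-eigenvalue $1$ and $(h_{-1})^*$-eigenvalue $-1$) together characterize $1$-homogeneity, and both are preserved. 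Hence $\{H,H'\}_\zw$ is again $1$-homogeneous, so there is a unique section, which we name $\{\zs,\zs'\}^J_\zw$, with $\zi_{\{\zs,\zs'\}^J_\zw}=\{\zi_\zs,\zi_{\zs'}\}_\zw$; this uses that $\zs\mapsto\zi_\zs$ is a $C^\infty(M)$-module isomorphism from $\Sec(L^*_P)$ onto the $1$-homogeneous functions on $P$ (established in Section 2).

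\emph{Step 3: Jacobi bracket axioms.} Bilinearity and the Jacobi identity for $\{\cdot,\cdot\}^J_\zw$ are immediate transports of the corresponding properties of $\{\cdot,\cdot\}_\zw$ through the isomorphism $\zs\mapsto\zi_\zs$, as is antisymmetry. The one genuinely non-formal point — and the step I expect to require a little care — is the \emph{locality / first-order} property: that $\{\zs,\cdot\}^J_\zw$ is a first-order differential operator on $\Sec(L^*_P)$ (the Kirillov/Marle condition), which amounts to showing $\{f\zs,\zs'\}^J_\zw$ depends on $f\in C^\infty(M)$ through at most its first jet. Under $\zi$, multiplication of a section by $f\in C^\infty(M)$ corresponds to multiplication of the associated function by $\zt^*f$ (a $0$-homogeneous function), and $\{\zt^*f\cdot\zi_\zs,\zi_{\zs'}\}_\zw = \zt^*f\,\{\zi_\zs,\zi_{\zs'}\}_\zw + \zi_\zs\,\{\zt^*f,\zi_{\zs'}\}_\zw$ by the Leibniz rule; the first term is $\zt^*f$ times a $1$-homogeneous function, the second is $\zi_\zs$ (a $1$-homogeneous function) times $X_{\zi_{\zs'}}(\zt^*f)$, which is $0$-homogeneous and depends only on the first jet of $f$ since $X_{\zi_{\zs'}}$ is projectable to $M$ (itself a consequence of its $0$-homogeneity, as noted in the introduction). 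Thus $\{f\zs,\zs'\}^J_\zw = f\{\zs,\zs'\}^J_\zw + (\ul{X}^c_{\zs'}f)\,\zs$ is a first-order operator in $f$, which is exactly the defining feature of a Jacobi (local Lie algebra) bracket. Carrying out Step 3 rigorously — in particular verifying that the symbol of $\{\zs,\cdot\}^J_\zw$ descends to $M$ — is where the principal-bundle bookkeeping must be done carefully; everything else is a straightforward translation across the isomorphism $\zs\mapsto\zi_\zs$.
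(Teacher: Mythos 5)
The paper states this proposition without proof, treating it as a known fact and pointing to the Kirillov/Marle literature, so there is no in-paper argument to compare against; your job here was to supply the missing proof, and what you wrote does that correctly. Steps 1 and 2 (Hamiltonian vector fields of $1$-homogeneous functions are $\n_P$-invariant, hence the bracket of two $1$-homogeneous functions has $\Ll_{\n_P}$-eigenvalue $1$) are the standard argument and are fine, and Step 3 correctly isolates the one non-formal point, namely the Leibniz identity $\{f\zs,\zs'\}^J_\zw=f\{\zs,\zs'\}^J_\zw\pm(\ul{X}^c_{\zs'}f)\,\zs$ obtained from $\{\zt^*f\cdot\zi_\zs,\zi_{\zs'}\}_\zw=\zt^*f\,\{\zi_\zs,\zi_{\zs'}\}_\zw+\zi_\zs\,\{\zt^*f,\zi_{\zs'}\}_\zw$, which is exactly the first-order (local Lie algebra) property.

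One genuine slip to fix in Step 2: the claim that ``$(h_{-1})^*$ preserves $\zw$'' is false. Since $\zw$ is $1$-homogeneous, $(h_{-1})^*\zw=-\zw$, so $(h_{-1})^*$ is an anti-automorphism of the Poisson bracket: $(h_{-1})^*\{F,G\}_\zw=-\{(h_{-1})^*F,(h_{-1})^*G\}_\zw$. Applied to $F=H$, $G=H'$ with $(h_{-1})^*H=-H$ and $(h_{-1})^*H'=-H'$ this gives $(h_{-1})^*\{H,H'\}_\zw=-\{H,H'\}_\zw$, which is the parity required for $1$-homogeneity under the full (disconnected) group $\Rt$ --- so your conclusion stands, but the reason is the sign reversal of $\zw$, not its invariance. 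The cleanest bookkeeping is that the Poisson bivector $\zw^{-1}$ is $(-1)$-homogeneous, so the bracket of functions of degrees $k$ and $l$ has degree $k+l-1$. A second, cosmetic point: the projectability of $X_{\zi_{\zs'}}$ that you invoke in Step 3 is only proved in the paper in Section 4; since it follows immediately from the $\Rt$-invariance you already established in Step 1, it would be cleaner to cite that rather than the introduction.
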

\begin{remark}
This bracket makes the line bundle $L^*_P$ into a \emph{local Lie algebra} in the sense of Kirillov \cite{Kirillov:1976} (see also \cite{Guedira:1984}) or \emph{Jacobi bundle} in the sense of Marle \cite{Marle:1991} (see also \cite{Bruce:2017}). This Jacobi bracket is sometimes called the \emph{Legendre bracket}.
For trivial $P$ it is a particular case of a Jacobi bracket on the $C^\infty(M)$-module $C^\infty(M)$ in the sense of Lichnerowicz \cite{Lichnerowicz:1978} (for graded Jacobi brackets we refer to \cite{Grabowski:2001,Grabowski:2003} and for supergeometric version to \cite{Grabowski:2013,Mehta:2013}). A standard misunderstanding present in the literature is that the Jacobi bracket on $C^\infty(M)$ is viewed as a bracket on the algebra $C^\infty(M)$ and not on $C^\infty(M)$ as a $C^\infty(M)$-module. In fact, the corresponding Jacobi structure (cf. \cite{Dazord:1991}) on $M$ in the form of a pair $(\zL,\zG)$, where $\zL$ is a bivector field and $\zG$ is a vector field on $M$ (satisfying some additional conditions), comes from taking the constant function 1 as the basic section for this module, while choosing another basic section leads to other tensors for the same Jacobi bracket. In contrast to that, for Poisson brackets the structure of an associative algebra on $C^\infty(M)$ is crucial. For a nice characterization of Poisson and Jacobi brackets we refer to \cite{Grabowski:1979a}.
\end{remark}
\noindent In the next section we will try to convince the reader that symplectic $\Rt$-principal bundles are nothing but contact structures.

\section{Contact structures}
Let us start with the following simple observation.
\begin{theorem}\label{t1} Let $\zh$ be a 1-form on a manifold $M$ of dimension $2n+1$. The following are equivalent.
\begin{description}
\item{(a)} The volume form $\zn=\zh\we(\xd\zh)^n$ is nowhere vanishing, $\zh\we(\xd\zh)^n\ne 0$.
\item{(b)} The 1-form $\zh$ is nowhere vanishing and the 2-form $\xd\zh$ is nondegenerate on the distribution $C=\ker(\zh)$.
\item{(c)} The vector bundle morphism
$$\#_\zh:\sT M\to\sT^*M\,,\quad \#_\zh(X)=\la X,\zh\ran\cdot\zh+i_X\xd\zh$$
is an isomorphism.
\item{(d)} The two form $\zw_\zh=\xd(s\cdot\zh)=\xd s\we\zh+s\cdot\xd\zh$ on $\bar M=\Rt\ti M$ is symplectic.
\item{(e)} The 1-form $\zh$ is nowhere vanishing and $[\zh]^\ti$ is a symplectic submanifold of $\sT^*M$ equipped with the canonical symplectic form $\zw_M$ on $\sT^*M$, where $[\zh]\subset\sT^*M$ is the line subbundle in $\sT^*M$ generated (spanned) by $\zh$.
\end{description}
\end{theorem}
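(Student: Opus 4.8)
The plan is to prove the equivalences by a cycle, organizing them so that the main linear-algebra content is isolated at a single point. First I would set up the pointwise picture: fix $x\in M$ and work in the $(2n+1)$-dimensional vector space $V=\sT_xM$ with the covector $\zh_x\in V^*$ and the alternating form $\zb=(\xd\zh)_x\in\Lambda^2V^*$. The whole theorem is really a statement about the pair $(\zh_x,\zb)$ that holds at every point, plus — for (d) and (e) — a global assembly. So the core claim to establish is the pointwise equivalence of: (a$'$) $\zh_x\we\zb^n\ne 0$; (b$'$) $\zh_x\ne 0$ and $\zb$ restricted to $W=\ker\zh_x$ is nondegenerate; (c$'$) the map $X\mapsto\la X,\zh_x\ran\zh_x+\iota_X\zb$ is a linear isomorphism $V\to V^*$. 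The implication (b$'$)$\Rightarrow$(a$'$) is the standard observation that if $\zb|_W$ is symplectic then $W$ has even dimension $2n$, one can pick a symplectic basis of $W$ together with any vector $Z$ with $\la Z,\zh_x\ran=1$, and evaluating $\zh_x\we\zb^n$ on this basis gives a nonzero multiple of $n!$; conversely (a$'$)$\Rightarrow$(b$'$) because $\zh_x\we\zb^n\ne0$ forces $\zh_x\ne0$, and then writing $V=W\oplus\R Z$ one checks $\zh_x\we\zb^n$ evaluated on $Z$ and a basis of $W$ equals $\zh_x(Z)\cdot(\zb|_W)^n$ up to sign, so $(\zb|_W)^n\ne0$, i.e.\ $\zb|_W$ is nondegenerate.

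Next I would handle (c$'$). For (b$'$)$\Rightarrow$(c$'$): suppose $\#_\zh(X)=0$. Pairing with any $Y\in W$ kills the $\zh_x$-term and gives $\zb(X,Y)=0$ for all $Y\in W$; pairing with a vector $Z$ with $\la Z,\zh_x\ran=1$ will, after one first deduces $\la X,\zh_x\ran=0$ (obtained by pairing $\#_\zh(X)$ with a suitable vector, or more cleanly by noting $\la\#_\zh(X),X\ran=\la X,\zh_x\ran^2$ since $\iota_X\zb(X)=0$), show $X\in W$ and $X$ is in the radical of $\zb|_W$, hence $X=0$; so $\#_\zh$ is injective, and by dimension count it is an isomorphism. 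For (c$'$)$\Rightarrow$(b$'$): if $\zh_x=0$ then $\#_\zh$ has image inside $\operatorname{im}(\iota_{(\cdot)}\zb)\subset (\text{something proper})$ and also $\#_\zh$ is not injective since $\zb$ alone on an odd-dimensional space is degenerate — so $\zh_x\ne0$; and the identity $\la\#_\zh(X),X\ran=\la X,\zh_x\ran^2$ shows that any $X$ in the radical of $\zb|_W$ satisfies $\#_\zh(X)=0$ after checking $\la X,\zh_x\ran=0$ for such $X$ (which holds since $X\in W$), forcing that radical to be trivial. Passing from the pointwise statements back to $M$, (a)$\Leftrightarrow$(b)$\Leftrightarrow$(c) is immediate since nowhere-vanishing / isomorphism-at-each-point are exactly the global versions.

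Finally the two symplectization statements. For (a)$\Rightarrow$(d): on $\bar M=\Rt\ti M$ with fiber coordinate $s$, compute $\zw_\zh^{n+1}=(\xd s\we\zh+s\,\xd\zh)^{n+1}$; all terms with two or more factors of $(\xd s\we\zh)$ vanish, leaving $(n+1)\,s^n\,\xd s\we\zh\we(\xd\zh)^n$, which is a top form on $\bar M$ and is nowhere zero precisely because $s\ne0$ on $\Rt$ and $\zh\we(\xd\zh)^n\ne0$ by (a); closedness of $\zw_\zh$ is clear as it is exact. Conversely (d)$\Rightarrow$(a): nondegeneracy of $\zw_\zh$ means $\zw_\zh^{n+1}\ne0$ everywhere, and by the same computation this equals $(n+1)s^n\,\xd s\we\zh\we(\xd\zh)^n$, forcing $\zh\we(\xd\zh)^n\ne0$. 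For (e), I would first observe that $[\zh]\subset\sT^*M$ is a genuine rank-one subbundle exactly when $\zh$ is nowhere vanishing, and that $[\zh]^\ti$ is then an $\Rt$-principal subbundle of $\sT^*M$ over $M$; the point is to identify it with $\bar M=\Rt\ti M$ via $(s,x)\mapsto s\,\zh_x$ and to check that the pullback of the canonical symplectic form $\zw_M=\xd\zq_M$ (with $\zq_M$ the Liouville 1-form) under this embedding is exactly $\xd(s\,\zh)=\zw_\zh$ — because the tautological 1-form of $\sT^*M$ restricted to the image of $x\mapsto s\,\zh_x$ pulls back to $s\,\zh$. Then $[\zh]^\ti$ is a symplectic submanifold iff $\zw_\zh$ is symplectic, i.e.\ iff (d), closing the cycle (a)$\Rightarrow$(d)$\Rightarrow$(a) and (d)$\Leftrightarrow$(e).

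The main obstacle I anticipate is organizing the (c)-equivalence cleanly: the operator $\#_\zh$ mixes the ``$\zh$-direction'' and the ``$\zb$-on-$C$-direction'', and the temptation is to split $V=\ker\zh_x\oplus(\text{complement})$ and write $\#_\zh$ as a block matrix, but the complement is not canonical and the blocks are not independent, so one must instead argue directly with the two identities $\la\#_\zh(X),X\ran=\la X,\zh_x\ran^2$ and $\#_\zh(X)|_{\ker\zh_x}=(\iota_X\zb)|_{\ker\zh_x}$ to extract first $\la X,\zh_x\ran=0$ and then $X\in\operatorname{rad}(\zb|_{\ker\zh_x})$. Everything else is bookkeeping with the wedge-power computation and the standard description of the tautological form on $\sT^*M$.
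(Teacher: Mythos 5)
Your overall route coincides with the paper's in all but one place: the pointwise linear algebra for (a)$\Leftrightarrow$(b)$\Leftrightarrow$(c), the computation $\zw_\zh^{n+1}=(n+1)\,s^n\,\xd s\we\zh\we(\xd\zh)^n$ linking (a) and (d), and the identification $(s,x)\mapsto s\cdot\zh(x)$ together with the pullback of the Liouville form for (d)$\Leftrightarrow$(e) are exactly the paper's arguments. The only structural difference is that the paper closes the cycle via a direct (c)$\Rightarrow$(d), while you close it via a pointwise (c$'$)$\Rightarrow$(b$'$) — and it is precisely there that your argument has a genuine gap.

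You claim that any $X$ in the radical of $\zb|_W$ (where $\zb=(\xd\zh)_x$ and $W=\ker\zh_x$) satisfies $\#_\zh(X)=0$. This is false. For such an $X$ one only gets $\#_\zh(X)=i_X\zb$ (the $\zh_x$-term dies because $X\in W$), and $i_X\zb$ is merely known to annihilate the hyperplane $W$; it can therefore be a \emph{nonzero} multiple of $\zh_x$. Concretely, take $V=\R^5$, $\zh_x=e_5^*$, $\zb=e_1^*\we e_2^*+e_3^*\we e_5^*$: then $e_3$ lies in the radical of $\zb|_W$ but $\#_\zh(e_3)=i_{e_3}\zb=e_5^*\ne 0$. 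The identity $\la\#_\zh(X),X\ran=\la X,\zh_x\ran^2$ you invoke gives nothing here, since $\la i_X\zb,X\ran=\zb(X,X)=0$ automatically. Two repairs are available. (i) Parity: the radical of an alternating form on the $2n$-dimensional space $W$ has even dimension, so if nontrivial it has dimension at least $2$, while the map $X\mapsto i_X\zb$ sends it into the line $\R\,\zh_x$; hence some nonzero radical element satisfies $i_X\zb=0$ and does lie in $\ker\#_\zh$. (ii) The paper's surjectivity argument: if $i_X\zb=c\,\zh_x$ with $c\ne 0$ for a radical element $X$, then $\zh_x\in\operatorname{im}(\zb^\flat)$, so the image of $\#_\zh$, which is always contained in $\operatorname{im}(\zb^\flat)+\R\,\zh_x$, collapses to $\operatorname{im}(\zb^\flat)$, of even dimension at most $2n<2n+1$, contradicting surjectivity of $\#_\zh$. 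Either patch closes the argument; the remainder of your proposal is correct.
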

\begin{proof}
(a)$\Rightarrow$(b)
The 1-form is clearly nowhere vanishing, so that $C=\ker{\zh}$ is a well-defined hyperplane field (distribution of codimension 1). Let $X_1,\dots,X_{2n}$ be vector fields from $C$ forming a local basis of $C$. Then $\xd\zh$ is nondegenerate on $C$ if and only if
$F=(\xd\zh)^n(X_1,\dots,X_{2n})$ is a nowhere vanishing function.
Suppose $F(x_0)=0$ and let us take a vector $v\in\sT_{x_0}M$ transversal to $C$, i.e., $i_v\zh(x_0)=a\ne 0$. It is clear that
$$\zn(x_0)\left(v,X_1(x_0),\dots,X_{2n}(x_0)\right)=a\cdot F(x_0)=0\,,$$
that contradicts (a).

\medskip\noindent (b)$\Rightarrow$(c) Suppose
\be\label{E1}\la X,\zh\ran(x_0)\cdot\zh(x_0)+(i_X\xd\zh)(x_0)=0\ee
for some $x_0\in M$ and a vector field $X$, $X(x_0)\ne 0$.
Contracting (\ref{E1}) with $X(x_0)$ we get $[\la X,\zh\ran(x_0)]^2=0$, so that $X(x_0)\in C_{x_0}$. Using (\ref{E1}) once more we get $(i_X\xd\zh)(x_0)=0$ that contradicts (b).

\medskip\noindent (c)$\Rightarrow$(d)
Let us see first that $\zh$ is nowhere vanishing. Since $\#_\zh$ is an isomorphism, in the case $\zh(x_0)$ we would have that
$$(\xd\zh)^{\flat}:\sT M\to\sT^*M\,,\quad (\xd\zh)^{\flat}(X)=i_X\xd\zh$$
is an isomorphism at $x_0$, so $\xd\zh$ is symplectic in a neighbourhood $U$ of $x_0$. But the manifold is odd-dimensional and does not admit symplectic forms.
Suppose now that $\zw_\zh$ is degenerate, so that $i_Y\zw_\zh(s_0,x_0)=0$ for some nonzero $Y=f\,\pa_s+X(x_0)\in\sT_{s_0}\Rt\ti\sT_{x_0}M$. This means
$$f\cdot\zh(x_0)-\la X(x_0),\zh(x_0)\ran\,\xd s+s_0\cdot i_{X(x_0)}\xd\zh(x_0)=0\,,$$
which implies that $\la X(x_0),\zh(x_0)\ran=0$ and
$$f\cdot\zh(x_0)+s_0\cdot i_{X(x_0)}\xd\zh(x_0)=0\,.$$
As $s_0\ne 0$, in the case $f=0$ we would have
$$\#_\zh(X(x_0))=\la X(x_0),\zh(x_0)\ran\cdot\zh(x_0)+i_{X(x_0)}\xd\zh(x_0)=i_{X(x_0)}\xd\zh(x_0)=0\,,
$$
so $X(x_0)=0$. But $Y\ne 0$ and hence $f\ne 0$; a contradiction. Suppose therefore that $f\ne 0$ which implies
\be\label{E2}i_{X(x_0)}\xd\zh(x_0)=(\xd\zh)^\flat(X(x_0))=-f\cdot\zh(x_0)\,.\ee
The map $\#_\zh$ on the $2n$-dimensional distribution $\ker(\zh)$ coincides with $(\xd\zh)^\flat$, so that $V=(\xd\zh)^\flat(\ker(\zh))$ is a $2n$-dimensional vector subbundle in $\sT^*M$
which, according to (\ref{E2}), contains $\zh(x_0)$. But the image of $\#_\zh$ is contained in $V+[\zh]$, where $[\zh]\subset\sT^*M$ is the line subbundle generated by the 1-form $\zh$. As $V(x_0)$ contains $\zh(x_0)$, the image of $\#_\zh(x_0)$ is contained in $V(x_0)+[\zh(x_0)]=V(x_0)$
which is not the full $\sT^*_{x_0}M$, since $V(x_0)$ has dimension $2n$.

\medskip\noindent (d)$\Leftrightarrow$(e)
The both statements imply that $\zh$ is nowhere vanishing, so that $[\zh]$ is a well-defined line subbundle of $\sT^*M$. Consider the smooth map
$$I_\zh:\Rt\ti M\to\sT^*M\,,\quad I_\zh(s,x)=s\cdot\zh(x)\,.$$
It is obvious that it provides a diffeomorphic identification of $[\zh]^\ti$ with $\Rt\ti M$. The canonical symplectic form $\zw_M$ on $\sT^*M$ can be written as $\zw_\zh=-\xd \zvy_M$ (the sign is a question of convention), where $\zvy_M$ is the canonical (tautological) Liouville 1-form on $\sT^*M$. It is well known that the pullback $\za^*(\zvy_M)=\za$ for any 1-form $\za$ on $M$, viewed as the section $\za:M\to\sT^*M$. In consequence,
$$I^*_\zh(\zw_M)(s,x)=I^*_\zh(\xd\zvy_M)(s,x)=\xd(I^*_\zh(\zvy_M))(s,x)=
\xd((s\cdot\zh)^*(\zvy_M))(s,x)=\xd s\we\zh(x)+s\cdot\xd\zh(x)\,,$$
so that $[\za]^\ti$ is a symplectic submanifold of $\sT^*M$ if and only if $\zw_\zh$ is symplectic on $\Rt\ti M$.

\medskip\noindent (d)$\Rightarrow$(a)
Since $\zw_\zh$ is symplectic on the $(2n+2)$-dimensional manifold $\bar M=\Rt\ti M$, the Liouville volume form $(\zw_\zh)^{n+1}$ is nowhere vanishing. But
$$(\zw_\zh)^{n+1}=(\xd s\we\zh+s\cdot\xd\zh)^{n+1}=(n+1)s^n\cdot\xd s\we\zh\we(\xd\zh)^n\,,$$
so that also $\zh\we(\xd\zh)^n$ is nowhere vanishing.

\end{proof}
\begin{definition} A 1-form defined on a $(2n+1)$-dimensional manifold $M$ which satisfies one of the conditions (a) -- (e) (thus all of them) we call a \emph{contact form}.
\end{definition}
The fundamental result in this area is the fact that all contact forms in dimension $2n+1$ are locally equivalent.
\begin{theorem}[contact Darboux Theorem] If $\zh$ is a contact form on a manifold $M$ of dimension $(2n+1)$, then for each $x_0\in M$ there is a neighbourhood of $x_0$ with local coordinates $(z,q^i,p_j)$, $i,1=1,\dots, n$, such that
$$\zh=\xd z-p_i\,\xd q^i\,.$$
\end{theorem}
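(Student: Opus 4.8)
The plan is to reduce the claim to the classical symplectic Darboux theorem by straightening out the \emph{Reeb vector field} of $\zh$. Since $\zh$ is a contact form, by Theorem~\ref{t1}(c) the morphism $\#_\zh$ is an isomorphism, so there is a unique vector field $R$ with $\#_\zh(R)=\zh$, i.e. $\la R,\zh\ran\,\zh+i_R\xd\zh=\zh$. Contracting this equation with $R$ gives $\la R,\zh\ran^2=\la R,\zh\ran$, and the case $\la R,\zh\ran=0$ is excluded because by Theorem~\ref{t1}(b) it would put $R$ in the radical of $\xd\zh|_{\ker\zh}$, forcing $R=0$ and hence $\#_\zh(R)=0\ne\zh$. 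Thus $i_R\zh=1$ and then $i_R\xd\zh=0$, so that $\Ll_R\zh=i_R\xd\zh+\xd\,i_R\zh=0$: the form $\zh$ is invariant under the flow of $R$.

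Next, fix $x_0\in M$ and pick a local hypersurface $\zS\ni x_0$ transverse to $R$ (a level set of a function whose differential does not annihilate $R(x_0)$). Flowing $\zS$ by the local flow $\psi_t$ of $R$, the map $(t,y)\mapsto\psi_t(y)$ is a diffeomorphism from a neighbourhood of $(0,x_0)$ in $\R\ti\zS$ onto a neighbourhood $U$ of $x_0$; write $t$ for the resulting first coordinate and $\pi:U\to\zS$ for the projection, so $R=\pa_t$ on $U$. Decomposing $\zh$ into its $\xd t$-part and its $\zS$-part and using $i_{\pa_t}\zh=i_R\zh=1$ together with $\Ll_{\pa_t}\zh=0$ (which kills all $t$-dependence of the coefficients), we obtain $\zh=\xd t+\pi^*\za$ for a uniquely determined $1$-form $\za$ on $\zS$. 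From $\xd\zh=\pi^*(\xd\za)$ and $\zh\we(\xd\zh)^n=\xd t\we\pi^*\bigl((\xd\za)^n\bigr)\ne 0$ it follows that $\xd\za$ is nondegenerate, hence a symplectic form on the $2n$-dimensional $\zS$.

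Now apply the symplectic Darboux theorem on $\zS$: choose coordinates $(q^i,p_i)$ near $x_0$ with $\xd\za=\xd q^i\we\xd p_i$. Then $\xd(\za+p_i\,\xd q^i)=0$, so on a simply connected neighbourhood $\za=\xd h-p_i\,\xd q^i$ for some function $h$ on $\zS$. Setting $z:=t+h\circ\pi$ and keeping the names $q^i,p_j$ for their pullbacks by $\pi$, the tuple $(z,q^i,p_j)$ is a coordinate system on a neighbourhood of $x_0$ — the passage from $(t,q^i\circ\pi,p_j\circ\pi)$ is triangular with $\pa z/\pa t=1$ — and $\zh=\xd t+\pi^*\za=\xd z-p_i\,\xd q^i$, as desired.

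The only delicate points are the bookkeeping in the middle step — verifying that $i_{\pa_t}\zh=1$ and $\Ll_{\pa_t}\zh=0$ together force the normal form $\zh=\xd t+\pi^*\za$ with $\za$ basic for $\pi$ — and checking that the final substitution is genuinely a local diffeomorphism; all the geometric content is carried by the symplectic Darboux theorem applied to $(\zS,\xd\za)$. An alternative, more in the spirit of this paper, is to argue on the symplectization $(\bar M,\zw_\zh)$ of Theorem~\ref{t1}(d): an $\Rt$-equivariant version of the symplectic Darboux theorem produces Darboux coordinates on $\bar M$ that are homogeneous of degrees $0$ and $1$ with respect to $\n_{\bar M}=s\,\pa_s$, and the degree-$0$ coordinates together with the quotients of the remaining degree-$1$ coordinates by $s$ descend to the contact Darboux chart on $M$; in that route it is the equivariant Darboux step that requires care.
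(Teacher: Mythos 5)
The paper does not prove this statement: it records the contact Darboux theorem as the classical local-equivalence result and moves on, so there is no in-paper argument to compare yours against. Your proof is correct and is the standard one. The three steps all check out: (i) deriving the Reeb field from the isomorphism $\#_\zh$ of Theorem~\ref{t1}(c) is a nice way to stay inside the paper's framework — contracting $\#_\zh(R)=\zh$ with $R$ kills the $\xd\zh$ term and gives $\la R,\zh\ran^2=\la R,\zh\ran$, and the value $0$ is correctly ruled out pointwise by nondegeneracy of $\xd\zh$ on $\ker\zh$, so $i_R\zh=1$, $i_R\xd\zh=0$ and $\Ll_R\zh=0$; (ii) in the flow-box coordinates the conditions $i_{\pa_t}\zh=1$ and $\Ll_{\pa_t}\zh=0$ do force $\zh=\xd t+\pi^*\za$ with $\za$ basic, and the term $\pi^*\bigl(\za\we(\xd\za)^n\bigr)$ vanishes for dimension reasons, so nonvanishing of $\zh\we(\xd\zh)^n$ is exactly nondegeneracy of $\xd\za$ on the $2n$-dimensional transversal; (iii) the Poincar\'e lemma plus the triangular change of variables $z=t+h\circ\pi$ finishes the argument. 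The only cost of this route is that it invokes the symplectic Darboux theorem on $(\zS,\xd\za)$ as a black box, which is entirely reasonable; the alternative you sketch via an $\Rt$-equivariant Darboux theorem on the symplectization of Theorem~\ref{t1}(d) would be more in the spirit of the paper's ``homogeneous symplectic'' philosophy, but as you note it shifts the burden onto establishing the equivariant normal form, so as a complete proof the transversal argument is the cleaner choice.
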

Such coordinates we call \emph{Darboux coordinates} for $\zh$.

\medskip\noindent
From Theorem \ref{t1} (but also from the Darboux Theorem) we immediately get the following properties of contact forms.
\begin{theorem} Let $\zh$ be a contact form on a manifold $M$. Then,
\begin{itemize}
\item The 1-form $\zh$ is nowhere vanishing.
\item The one form $g\cdot\zh$ is also a contact form for any nowhere vanishing function $g:M\to\R$.
\item There exist a unique vector field $\cR_\zh$ on $M$ such that $i_{\cR_\zh}\zh=1$ and $i_{\cR_\zh}\xd\zh=0$.
\item The manifold $M$ is orientable.
\item The dimension of (immersed) submanifolds $e_N:N\hookrightarrow M$ such that $e^*_N(\zh)=0$ is $\le n$.
\end{itemize}
\end{theorem}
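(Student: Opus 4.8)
The plan is to deduce each of the five bulleted properties directly from the equivalent conditions of Theorem~\ref{t1}, invoking the contact Darboux Theorem only where it is genuinely quicker. The first item, that $\zh$ is nowhere vanishing, is immediate from condition (b) (or already built into (c) and (e)), so there is nothing to do. For the second item, if $g:M\to\R$ is nowhere vanishing then $g\cdot\zh$ has the same kernel distribution $C$ as $\zh$, and a short computation gives $(g\cdot\zh)\we(\xd(g\cdot\zh))^n = (g\cdot\zh)\we(g\,\xd\zh + \xd g\we\zh)^n = g^{n+1}\,\zh\we(\xd\zh)^n$, because every term in the binomial expansion involving a factor $\xd g\we\zh$ is killed after wedging with the leading $\zh$. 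Hence condition (a) for $\zh$ transfers to $g\cdot\zh$.

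For the third item I would argue pointwise. Since $\xd\zh$ is nondegenerate on the $2n$-dimensional distribution $C=\ker\zh$ by condition (b), and $\sT_xM = C_x\oplus\langle v\rangle$ for any $v$ with $\langle v,\zh\rangle\neq 0$, the map $(\xd\zh)^\flat$ has rank exactly $2n$ at each point, so its kernel is one-dimensional and transversal to $C$; choosing the unique generator of that kernel line on which $\zh$ evaluates to $1$ gives a well-defined vector field $\cR_\zh$ with $i_{\cR_\zh}\xd\zh=0$ and $i_{\cR_\zh}\zh=1$, and uniqueness is clear since the first equation pins down the line and the second the scaling. Smoothness follows because locally $\cR_\zh$ is the solution of a linear system with coefficients depending smoothly on $x$ (equivalently, in the Darboux coordinates of the preceding theorem one reads off $\cR_\zh=\pa_z$). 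The fourth item, orientability of $M$, is then a one-line consequence of condition (a): $\zh\we(\xd\zh)^n$ is a nowhere-vanishing top form on $M$, hence a volume form, hence $M$ is orientable. (Alternatively it follows from Theorem~\ref{t1}(d)--(e) together with the earlier remark that line bundles, here $[\zh]$, admit $\pm1$-valued cocycles, but the direct argument is shortest.)

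For the last item, let $e_N:N\hookrightarrow M$ be an immersed submanifold with $e_N^*\zh=0$; I must show $\dim N\le n$. Pulling back, $e_N^*(\xd\zh)=\xd(e_N^*\zh)=0$, so at each $x\in N$ the tangent space $T_xN$ sits inside $C_x=\ker\zh(x)$ and is moreover isotropic for the symplectic form $\xd\zh|_{C_x}$ on the $2n$-dimensional space $C_x$ (nondegeneracy being exactly condition (b)). An isotropic subspace of a $2n$-dimensional symplectic vector space has dimension at most $n$, giving the bound. The main obstacle — and it is minor — is keeping the linear-algebra bookkeeping in the third bullet clean: one must be careful that the kernel of $(\xd\zh)^\flat$ is genuinely transversal to $C$ (so that the normalization $i_{\cR_\zh}\zh=1$ is possible) rather than accidentally lying inside $C$, but this is precisely what condition (b) rules out, since $\xd\zh$ restricted to $C$ is already nondegenerate and hence has trivial kernel there. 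Everything else is routine multilinear algebra or a direct appeal to Theorem~\ref{t1}.
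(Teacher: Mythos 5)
Your proof is correct, and for the first four bullets it follows the same route the paper intends (the paper simply declares them immediate from Theorem~\ref{t1} or the Darboux theorem; you have supplied the routine verifications, and your computation $(g\cdot\zh)\we(\xd(g\cdot\zh))^n=g^{n+1}\,\zh\we(\xd\zh)^n$ and your rank argument for the Reeb field are both sound). The one place where you genuinely diverge is the last bullet. The paper proves it by passing to the symplectization: it observes that $\Rt\ti N$ is an isotropic submanifold of the $(2n+2)$-dimensional symplectic manifold $(\Rt\ti M,\zw_\zh)$, hence has dimension $\le n+1$, so $\dim N\le n$. You instead argue directly on $M$: from $e_N^*\zh=0$ and $e_N^*(\xd\zh)=\xd(e_N^*\zh)=0$ you conclude that $\sT_xN$ is an isotropic subspace of the $2n$-dimensional symplectic vector space $\bigl(C_x,\xd\zh|_{C_x}\bigr)$, hence of dimension $\le n$. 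Both arguments are valid and rest on the same linear-algebra fact about isotropic subspaces; yours is more elementary and self-contained, while the paper's is deliberately phrased through the homogeneous symplectic cover, consistent with its overall philosophy that contact geometry \emph{is} homogeneous symplectic geometry. Nothing is missing from your argument.
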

\noindent The vector field $\cR_\zh$ we call the \emph{Reeb vector field} for $\zh$. As for the last statement, just observe that $\Rt\ti N$ is an isotropic submanifold for the symplectic form $\zw_\zh$, so it has dimension $\le n+1$. Note that the Reeb vector fields $\cR_\zh$ and $\cR_{g\cdot\zh}$ are different. In fact, they are generally even not proportional, $\cR_{g\cdot\zh}\ne f\cdot\cR_\zh$ for any function $f$.
\begin{definition}
A distribution of hyperplanes $C\subset\sT M$, where $M$ is a manifold of odd dimension $2n+1$,  we call \emph{maximally non-integrable} if locally $C=\ker(\zh)$ for a (local) contact form $\zh$. Such distributions we call also \emph{contact structures} on $M$. Submanifolds $N\subset M$ such that  $\sT N\subset C$ we call \emph{isotropic}. If $N$ is isotropic of the maximal possible dimension (i.e., $\dim(N)=n$), we call $N$ a \emph{Legendre submanifold}. If the contact form $\zh$ may be chosen global, we call the contact structure \emph{trivializable} (\emph{coorientable}); if a single global $\zh$ is chosen, the contact structure is \emph{trivial} (\emph{cooriented}).
\end{definition}
The term `maximal non-integrability' is justified by the fact that the Frobenius integrability criterion for the distribution $C=\ker(\zh)$ is $\xd\zh=0$ on $C$. The property that $\xd\zh$ is nondegenerate on $C$, valid for contact forms, is just the other extreme.
Of course, the contact form $\zh$ is determined only up to a nowhere vanishing factor, but it is ok, since we already know that any 1-form $g\cdot\zh$ with nowhere vanishing $g:M\to\R$ is also contact and they have the same kernel. Also isotropic and Legendre submanifolds for $\zh$ and $g\cdot\zh$ are the same, so the above definition makes sense. The contact forms $\zh$ and $g\cdot\zh$ we call \emph{equivalent}. In other words, a contact structure on $M$ is an open covering $\{ U_\za\}$ of $M$ with domains equipped with equivalence classes  of local contact forms  which coincide on the intersections $U_\za\cap U_\zb$.
Note that a contact structure may not be determined by a global contact form.
\begin{example}\label{ex}
Consider the projective cotangent bundle $M=\Pe(\sT^*\R^{n+1})$ which is the set of classes $[(q^i,p_j)]$ of points of
$$P=\left(\sT^*\R^{n+1}\right)^\ti=\R^{n+1}\ti\left((\R^{n+1})^*\setminus\{ 0\}\right)$$
modulo the equivalence relation
$$(q^i,p_j)\sim(q^i,\zl\cdot p_j)\,,$$
where $\zl\ne 0$ and $i,j=0,\dots,n$. The Liouville 1-form $\zvy_{\R^{n+1}}=p_i\xd q^i$ on $\sT^*\R^{n+1}$ is nowhere vanishing on $P$ and on every coordinate neighbourhood
$$U_k=\{[(q^i,p_j)]\,\big|\, p_k\ne 0\}\subset M$$ with the projective coordinates $$(q^i,p^k_0,\dots,p^k_{k-1},p^k_{k+1},\dots,p^k_{n})\,,\quad p^k_j([(q^i,p_j)])=p_j/p_k\,,$$
it induces the contact form
$$\zh_k=\xd q^k+\sum_{i\ne k}p^k_i\,\xd q^i=\xd q^k+\sum_{i\ne k}\left(\frac{p_i}{p_k}\right)\xd q^i\,.$$
Since on $U_k\cap U_l$ the change of projective coordinates is
$$\left(q^i,p^k_j\right)\mapsto\left(q^i,\frac{p^k_0}{p^k_l},\dots,\frac{1}{p^k_l},\dots,
\frac{p^k_n}{p^k_l}\right)\,,$$
we have
$$\zh_l=\xd q^l+\sum_{i\ne l,k}\left(\frac{p^k_i}{p^k_l}\right)\xd q^i+\left(\frac{1}{p^k_l}\right)\xd q^k=\left(\frac{1}{p^k_l}\right)\zh_k\,,$$
so that $\zh_l$ and $\zh_k$ are equivalent on $U_l\cap U_k$. This defines a canonical contact structure on $\Pe(\sT^*\R^{n+1})$. Of course, $\Pe(\sT^*\R^{n+1})$ is topologically $\R^{n+1}\ti\Pe\R^n$ and $\Pe\R^n$ is not orientable for even $n$, so that there is no globally defined contact form on
$\Pe(\sT^*\R^{n+1})$ if $n$ is even.
By a similar method we can define a canonical contact structure on the projective cotangent bundle $\Pe(\sT^*N)$ for any manifold $N$, which does not admit a global contact form if $\dim(N)$ is odd.
\end{example}
Note that the line subbundle $[\zh]$ generated by a contact form $\zh$ does not depend on the choice of a contact form from the equivalence class of $\zh$, so that $[\zh]$ determines an equivalence class of contact forms rather than a single contact form. In fact, $[\zh]$ is the annihilator $C^o$ of the field of hyperplanes $C$. This observation, together with Theorem \ref{t1}, implies the following.
\begin{theorem} Let $L$ be a line subbundle in $\sT^\ast{M}$. The followings are equivalent:
\begin{description}
\item{(a)}  $L$ is locally generated by contact one-forms.
\item{(b)}  $L^\ti$ is a symplectic submanifold of $\sT^\ast{M}$.
\item{(c)}  The annihilator $C=L^o\subset\sT M$ is a contact structure on $M$.
\end{description}
\end{theorem}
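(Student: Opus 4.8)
The plan is to deduce all three statements from the single-form equivalences of Theorem~\ref{t1} by observing that each of (a), (b), (c) is a \emph{local} property over $M$ and then matching the local data. First I would fix $x_0\in M$ and a neighbourhood $U$ on which $L$ is trivial; since $L$ has rank one, there is a nowhere-vanishing section $\zh\colon U\to\sT^*M$ with $[\zh]=L|_U$, and any other local generator has the form $g\cdot\zh$ for a nowhere-vanishing $g\colon U\to\R$. Because $g\cdot\zh$ is a contact form if and only if $\zh$ is (recorded among the elementary properties of contact forms), condition (a) localised near $x_0$ says exactly that \emph{some}, equivalently \emph{every}, such local generator $\zh$ of $L$ is a contact form on a neighbourhood of $x_0$; in particular (a) is an intrinsic condition on $L$, independent of the chosen generator.

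For (a)$\Leftrightarrow$(c): the annihilator satisfies $C|_U=(L|_U)^o=\ker(\zh)$, and by finite-dimensional linear duality $(C|_U)^o=L|_U$, so $\zh$ and $C$ determine one another over $U$. By the very definition of a contact structure, $C$ is a contact structure precisely when, near every point, it is the kernel of a local contact form; if $C|_V=\ker(\zh')$ for a contact form $\zh'$ near $x_0$, then $\zh'$ and $\zh$ have the same kernel near $x_0$, hence differ there by a nowhere-vanishing factor, so $\zh$ is contact near $x_0$. Combined with the previous paragraph, $C$ is a contact structure near $x_0$ iff the local generator $\zh$ of $L$ is a contact form near $x_0$, which is (a). Letting $x_0$ vary gives (a)$\Leftrightarrow$(c).

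For (a)$\Leftrightarrow$(b): being a symplectic submanifold is checked pointwise along $L^\ti$, hence it is a local condition on $M$. Over $U$ the map $I_\zh\colon\Rt\ti U\to\sT^*M$, $I_\zh(s,x)=s\cdot\zh(x)$, is a diffeomorphism onto $L^\ti|_U$, and the computation carried out in the proof of Theorem~\ref{t1}, part (d)$\Leftrightarrow$(e), gives $I_\zh^*(\zw_M)=\xd s\we\zh+s\cdot\xd\zh=\zw_\zh$ on $\Rt\ti U$. Hence $L^\ti|_U$ is a symplectic submanifold of $(\sT^*M,\zw_M)$ iff $\zw_\zh$ is symplectic on $\Rt\ti U$, which by the equivalence (d)$\Leftrightarrow$(a) of Theorem~\ref{t1} holds iff $\zh$ is a contact form on $U$. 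Letting $x_0$ vary, $L^\ti$ is a symplectic submanifold iff every local generator of $L$ is a contact form, i.e. iff (a) holds.

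There is no hard analytic step here; the care needed is purely bookkeeping. The points to watch are: that "locally generated by contact forms" is genuinely a property of $L$, which rests on stability of the contact condition under multiplication by a nowhere-vanishing function and on the fact that two nowhere-vanishing $1$-forms with the same kernel differ by such a factor; that the "nowhere vanishing" hypotheses in Theorem~\ref{t1}(b),(c),(e) are automatic in this context, since a section spanning a line subbundle of $\sT^*M$ never vanishes; and that nondegeneracy of $\zw_M$ along $L^\ti$ over $x_0$, i.e. at $s\cdot\zh(x)$ for every $s\ne 0$ at once, is exactly what $\zw_\zh$ being symplectic on all of $\Rt\ti U$ encodes, rather than only on the slice $s=1$.
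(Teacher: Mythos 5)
Your proposal is correct and follows exactly the route the paper intends: the paper gives no separate proof of this theorem, stating only that it follows from Theorem~\ref{t1} together with the observation that the line bundle $[\zh]$ depends only on the equivalence class of $\zh$, and your argument is precisely a careful spelling-out of that reduction (localise, note that the contact condition on a local generator is independent of the generator, and invoke the equivalences (a), (d), (e) of Theorem~\ref{t1}).
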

It is clear that $L^\ti$ is canonically an $\Rt$-principal bundle with the $\Rt$-action induced from the multiplication by reals in the vector bundle $\sT^*M$. The canonical symplectic form $\zw_M$ on $\sT^*M$ is 1-homogeneous, so its restriction to $L^\ti$ is a symplectic form which is 1-homogeneous with respect to the $\Rt$-action. In other  words, if $C\subset\sT M$ is a contact structure on $M$, then $(C^o)^\ti$ is canonically a symplectic $\Rt$-principal bundle. This observation leads us to the fundamental result of this section which was proved in a more general situation (for contact structures on supermanifolds) in \cite{Grabowski:2013}.
\begin{theorem}
There is a canonical one-to-one correspondence between contact structures $C\subset\sT M$ on a manifold $M$  and symplectic $\Rt$-principal bundles over $M$. In this correspondence,  the canonical symplectic $\Rt$-principal bundle associated with $C$ is $(C^o)^\ti\subset\sT^*M$ (and will be called the \emph{symplectic cover} of the contact manifold $(M,C)$).
\end{theorem}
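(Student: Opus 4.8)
The plan is to treat the two directions asymmetrically, because one of them is practically already done. By the theorem immediately preceding this one, for a contact structure $C\subset\sT M$ the set $(C^o)^\ti\subset\sT^*M$ is an $\Rt$-principal bundle over $M$, and the restriction to it of the canonical symplectic form $\zw_M$ --- which is $1$-homogeneous --- is again a $1$-homogeneous symplectic form; thus $(C^o)^\ti$ is a symplectic $\Rt$-principal bundle, and the assignment $C\mapsto(C^o)^\ti$ is injective, since $C^o$ is recovered from $(C^o)^\ti$ as its fibrewise linear span and then $C=(C^o)^o$. So the content of the theorem is that \emph{every} symplectic $\Rt$-principal bundle $(P,\zt,M,h,\zw)$ is, canonically, of this form.

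First I would build a canonical $\Rt$-equivariant map $\wh\zvy\colon P\to\sT^*M$ covering $\zt$. As recalled in the Remark above, $1$-homogeneity gives $\zw=\xd\zvy$ with primitive $\zvy=i_{\n_P}\zw$, and this $\zvy$ is again $1$-homogeneous and satisfies $i_{\n_P}\zvy=0$; hence $\zvy$ annihilates the vertical bundle $\ker\sT\zt=\Span(\n_P)$, so the prescription $\wh\zvy(p)\colon v\mapsto\zvy_p(\tilde v)$ (for any $\tilde v\in\sT_pP$ with $\sT\zt(\tilde v)=v$) is well defined. From $\zt\circ h_s=\zt$ and $h_s^*\zvy=s\cdot\zvy$ one reads off $\wh\zvy\circ h_s=s\cdot\wh\zvy$ (scalar multiplication in $\sT^*M$), and a one-line computation with the tautological $1$-form $\zvy_M$ gives $\wh\zvy^*\zvy_M=\zvy$, hence $\wh\zvy^*\zw_M=\pm\zw$ (the sign being a matter of the usual convention for $\zw_M$). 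In particular $\wh\zvy$ is an immersion, since a vector $w$ in its kernel would satisfy $i_w\zw=\pm\,i_w(\wh\zvy^*\zw_M)=0$, forcing $w=0$.

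The decisive step is a local trivialisation computation. Over a trivialising open set $U\subset M$, write $P|_U\simeq\Rt\ti U$ with fibre coordinate $s$, so $\n_P=s\,\pa_s$; homogeneity together with $i_{\n_P}\zvy=0$ forces $\zvy=s\cdot\zh_U$ for a $1$-form $\zh_U$ on $U$, whence $\zw=\xd(s\cdot\zh_U)=\xd s\we\zh_U+s\cdot\xd\zh_U$. Since $\zw$ is symplectic, Theorem~\ref{t1}, (d)$\Leftrightarrow$(a), shows $\zh_U$ is a contact form on $U$, and in this chart $\wh\zvy$ is just $(s,x)\mapsto s\cdot\zh_U(x)$, i.e.\ exactly the map $I_{\zh_U}$ from the proof of Theorem~\ref{t1}, which is a diffeomorphism of $P|_U$ onto $[\zh_U]^\ti\subset\sT^*U$. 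As $\wh\zvy$ is defined independently of the chart, on overlaps $[\zh_\za]^\ti=[\zh_\zb]^\ti$, so the line subbundles $[\zh_\za]$ coincide there; the local contact forms $\zh_\za$ are therefore pairwise equivalent and glue to a contact structure $C$ on $M$ whose annihilator $C^o$ restricts to $[\zh_\za]$ over each $U_\za$. Consequently $\wh\zvy(P)=\bigcup_\za[\zh_\za]^\ti=(C^o)^\ti$ is an embedded $\Rt$-principal subbundle of $\sT^*M$, and $\wh\zvy\colon P\to(C^o)^\ti$ is a bijective local diffeomorphism, hence a diffeomorphism; being $\Rt$-equivariant and pulling $\zw_M$ back to $\pm\zw$, it is an isomorphism of symplectic $\Rt$-principal bundles. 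Together with the first paragraph this gives the asserted bijection and identifies the bundle attached to $C$ with $(C^o)^\ti$.

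The step I expect to be the main obstacle is precisely the passage from ``$\wh\zvy$ is an injective immersion'' to ``$\wh\zvy(P)$ is an \emph{embedded} $\Rt$-principal subbundle of $\sT^*M$''. The soft part is easy: equivariance and injectivity are formal, and $\zvy=i_{\n_P}\zw$ never vanishes (as $\n_P$ is nowhere zero and $\zw$ is nondegenerate), so $\wh\zvy$ lands in $\sT^*M\setminus 0_M$; but none of this shows the image is a submanifold, still less a punctured line subbundle. That is exactly what the local trivialisation supplies, by reducing the global statement to Theorem~\ref{t1}. A smaller but essential point is that the local primitives $\zh_U$ generally cannot be glued into a single global contact form --- $P$ need not be trivial --- so the gluing must be carried out at the level of the line subbundles $[\zh_U]=C^o$, which is exactly why working with $(C^o)^\ti$ rather than with a fixed $\zh$ is the right formulation.
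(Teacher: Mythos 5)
Your proposal is correct and follows essentially the same route as the paper: the canonical map $P\to\sT^*M$ you build from the semi-basic $1$-form $i_{\n_P}\zw$ is exactly the paper's $\Psi$ defined via $\wt\za=i_\zD\zw$, and the local-trivialisation computation identifying $\zvy=s\cdot\zh_U$ with $\zw=\xd(s\cdot\zh_U)$ symplectic, hence $\zh_U$ contact, is the same as the paper's. Your write-up is somewhat more explicit than the paper's about the gluing of the local line subbundles $[\zh_\za]$ and about why the image is an embedded subbundle, but this is elaboration rather than a different argument.
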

\begin{proof}
We need only to prove that any symplectic $\Rt$-principal bundle $(P,\zt,M,h,\zw)$ is canonically isomorphic with the symplectic $\Rt$-principal bundle structure on a symplectic submanifold $L^\ti$ in $(\sT^*M,\zw_M)$, where  $L\subset\sT^*M$ is a line subbundle.

If $\zD$ is the Euler vector field associated with $h$, then the 1-form $\wt{\za}=i_{\zD}\zw$ on $P$ is
semi-basic and defines a map $\Psi:P\to\sT^\ast{M}$ which is an isomorphism of the principal $\R^\ti$-bundle
$P$ onto $C^\ti\subset\sT^\ast{M}$, where $C$ is the line subbundle in $\sT^\ast{M}$ spanned by the image of
$\wt{\za}$. Indeed, let us write in local bundle coordinates $(t,x)$ in $P$, with $\zD=t\pa_t$, the symplectic
form $\zw$ as $\zw=\xd t\we{\za}+\zw'$ for a certain semi-basic one-form ${\za}$ and a semi-basic two-form
$\zw'$. Since $i_\zD\zw'=0$ and  $i_{\zD}\za=0$, we get $\wt{\za}=t\cdot\za$. As $\zD$ is 0-homogeneous,
$i_{\zD}\zw$ is 1-homogeneous, so $\za$ is homogeneous of degree 0, thus $\R^\ti$-invariant. Being
simultaneously semi-basic it is actually basic, so it can be regarded as a 1-form on ${M}$. Moreover,
$\xd(t\cdot\za)=\zw$, so that $\za$ is a contact form. Hence, $\Psi(t,x)=t\cdot\za(x)$ is a principal bundle
isomorphism which does not depend on the choice of homogeneous coordinates $(t,x)$, since a change in the
choice of the local trivialization of $P$ results in multiplication of $\za$ by an invertible function on
${M}$ and in multiplication of $t$ by the inverse of this function.
\end{proof}
\begin{proposition}
Let $(P,\zt,M,h,\zw)$ be a  contact structure on $M$.  An (immersed) submanifold $e_N:N\hookrightarrow M$  is a Legendre  submanifold of $M$ if and  only  if its pullback $\zt^{-1}(N)$ to $P$ is a Lagrangian submanifold in $(P,\zw)$. In other words, there is a canonical one-to-one correspondence between $\Rt$-invariant Lagrangian submanifolds $\cL$ of $P$ and Legendre  submanifolds $\cL_0=\zt(\cL)$ of $M$.
\end{proposition}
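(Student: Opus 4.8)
The plan is to reduce the statement to a one-line symplectic computation in a local trivialization, followed by a dimension count. Since being a Legendre submanifold, being a Lagrangian submanifold, and being $\Rt$-invariant are all local conditions, I would fix $x_0\in M$, pick a local trivialization $P\big|_U\simeq\Rt\ti U$ with coordinate $s$ on the $\Rt$-factor, and recall from Theorem~\ref{t1} and the proof of the structure theorem just established that in such a trivialization $\zw=\xd s\we\zh+s\cdot\xd\zh$ for a local contact form $\zh$ on $U$ with $C\big|_U=\ker\zh$, while $\zt^{-1}(N)\cap(\Rt\ti U)=\Rt\ti(N\cap U)$.

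Next I would compute, for $x\in N$ and a point $(s,x)$, the restriction of $\zw$ to $\sT_{(s,x)}(\Rt\ti N)=\R\pa_s\oplus\sT_xN$. For tangent vectors $X=f\pa_s+v$ and $Y=g\pa_s+w$ with $v,w\in\sT_xN$, using $\zh(\pa_s)=0$ and $i_{\pa_s}\xd\zh=0$ (both $\zh$ and $\xd\zh$ being pulled back from $M$), one gets
$$\zw(X,Y)=f\,\zh(w)-g\,\zh(v)+s\cdot\xd\zh(v,w)\,.$$
Putting $g=1$, $w=0$ and letting $v$ range over $\sT_xN$ shows that isotropy of $\Rt\ti N$ forces $\zh\big|_{\sT_xN}=0$, i.e. $e_N^*\zh=0$; conversely, if $e_N^*\zh=0$ then $e_N^*\xd\zh=\xd(e_N^*\zh)=0$, so every term in the displayed formula vanishes and $\Rt\ti N$ is isotropic for $\zw$. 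Hence $\zt^{-1}(N)$ is isotropic in $(P,\zw)$ precisely when $N$ is isotropic in $(M,C)$. Since $\dim P=2n+2$ and $\dim\zt^{-1}(N)=\dim N+1$, the submanifold $\zt^{-1}(N)$ has half the dimension of $P$ exactly when $\dim N=n$; combining the two facts, $\zt^{-1}(N)$ is Lagrangian if and only if $N$ is isotropic of dimension $n$, i.e. if and only if $N$ is Legendre.

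For the reformulation as a one-to-one correspondence I would argue that $\zt$ induces a bijection between $\Rt$-invariant (immersed) submanifolds $\cL\subset P$ and (immersed) submanifolds $\cL_0\subset M$: an $\Rt$-invariant $\cL$ is a union of fibres of the principal bundle $\zt$, hence equals $\zt^{-1}(\cL_0)$ with $\cL_0=\zt(\cL)$, and $\zt$ restricted to $\cL$ is an $\Rt$-invariant surjective submersion onto $\cL_0$, so $\cL_0$ is again an immersed submanifold; conversely $\zt^{-1}(\cL_0)$ is $\Rt$-invariant and an immersed submanifold for every immersed submanifold $\cL_0$, and the two operations are mutually inverse. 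The previous paragraph then says this bijection restricts to one between $\Rt$-invariant Lagrangian submanifolds of $P$ and Legendre submanifolds of $M$. The step I expect to be the main (though modest) obstacle is precisely this last bookkeeping in the \emph{immersed} rather than embedded case: one must check carefully that $\zt(\cL)$ is a genuine immersed submanifold and that $\cL$ is recovered as $\zt^{-1}(\zt(\cL))$. Everything else is the single symplectic computation above together with the dimension count.
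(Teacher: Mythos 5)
Your proposal is correct and follows essentially the same route as the paper's proof: pass to a local trivialization where $\zw=\xd s\we\zh+s\cdot\xd\zh$, compute $\zw$ on tangent vectors of the form $f\pa_s+v$, and conclude that isotropy of $\zt^{-1}(N)$ is equivalent to $\sT N\subset\ker\zh$. You merely make explicit two points the paper leaves implicit (the dimension count matching Lagrangian with Legendre, and the bookkeeping identifying $\Rt$-invariant submanifolds with preimages), which is fine.
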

\begin{proof} It is obvious that a  submanifold $\cL$ of $P$ is invariant with respect to the action of $\Rt$ if and only if it is a pullback (inverse image) of a submanifold $\cL_0$ of $M$, $\cL=\zt^{-1}(\cL_0)$. In a local trivialization $P=\Rt\ti M$ we can write
$$\cL=\Rt\ti\cL_0=\{(s,x)\in\Rt\ti M\,\big|\,x\in\cL_0\}\,,$$
and the symplectic form $\zw$ reads $\zw=\xd s\we\zh+s\cdot\xd\zh$ for some (local) contact form $\zh$ on $M$. Local vector fields $X$ tangent to $\cL$ are of the form $X_{(s,x)}=A(s,x)\,\pa_s+Y_s(x)$, where $A(s,x)\in\R$ and $Y_s(x)$ is tangent to $\cL_0$ for $x\in\cL_0$. It is easy to see now that
$$\zw(X,X')(s,x)=(A(s,x)i_{Y'_s}\zh)(x)-(A'(s,x)i_{Y_s}\zh)(x)+s\cdot\xd\zh(Y_s,Y'_s(x))(x)=0$$ for all $(s,x)\in\cL$ and all $X,X'$ tangent to $\cL$ if and only if $Y_s(x),Y'_s(x)\in\ker(\zh)$, i.e., $\sT\cL_0\subset\ker(\zh)$.

\end{proof}
\begin{definition} The symplectic $\Rt$-principal bundle associated with a contact manifold $(M,C)$ we will call the \emph{symplectic cover of $(M,C)$}.
\end{definition}
\begin{remark}
Many elements of the above  approach to contact structures are already present in books by Arnold \cite{Arnold:1989} and  Libermann--Marle \cite{Libermann:1987} and  spread over the literature. They are associated with the concept of a \emph{symplectization} of a contact structure. Actually, $(C^o)^\ti$ is the symplectization of the contact structure $C$. However, it is more sophisticated than the standard symplectization $\xd(e^s\,\zh)$ of a contact form $\zh$, which lives on $\R\ti M$. We need nontrivial $\Rt$-principal bundles with fibers $\Rt$ rather than $\R$ to cover the cases of nontrivial contact structures. In this paper we prefer, finding it much more elegant
and fruitful, to view the symplectizations as the contact structures themselves and to identify contact structures with their symplectic covers.
\end{remark}
\begin{example}\label{jb} Let $\zt_0:L\to Q$ be a line bundle and $\zt^*_0:L^*\to Q$ be its dual. As we already know (Example \ref{ex1}) the symplectic manifold $P=\sT^*(L^\ti)$ is canonically a symplectic $\R^\ti$-principal bundle.
It is also well known (cf. \cite{Grabowski:2013}) that there is a canonical identification of the contact manifold $M=\sT^*(L^\ti)/\Rt$ with the bundle ${\zt^1(L^*)}:\sJ^1(L^*)\to Q$ of first jets of sections of the line bundle $\zp_0:L^*\to Q$. Indeed, consider a function $F:L^\ti\to\R$ such that $\za_{v_q}=\xd F(v_q)$. Note that $F$ can be chosen 1-homogeneous, i.e., of the form $F=\zi_S\,\big|_{L^\ti}$ for some section $S:Q\to L^*$ of the line bundle $L^*$. Indeed, in local coordinates $(s,q^i)$ associated with a local trivialization $\R^\ti\ti Q$ of $\Lt$ (thus $L$), the dual local coordinates $(z,q^i)$ for the dual bundle $L^*$, and for an arbitrary $F$, we have
$$\xd F(s_0,q_0)=\frac{\pa F}{\pa s}(s_0,q_0)\,\xd s+\sum_i\frac{\pa F}{\pa q^i}(s_0,q_0)\,\xd q^i\,.$$
Consider a local section $z=S(q)$ of $L^*=\R^*\ti Q$ represented by a function $S(q)$ on $Q$. Then
$\zi_S(s,q)=s\cdot S(q)$ and
$$(\xd\,\zi_S)(s_0,q_0)=S(q_0)\,\xd s+ s_0\cdot\sum_i\frac{\pa S}{\pa q^i}(q_0)\,\xd q^i\,,$$
so it suffices to take the section $S$ such that
\be\label{jb1}S(q_0)=\frac{\pa F}{\pa s}(s_0,q_0)\quad\text{and}\quad
\frac{\pa S}{\pa q^i}(q_0)=s_0^{-1}\cdot\frac{\pa F}{\pa q^i}(s_0,q_0)\,.\ee
Of course, there are many such sections $S$, but equations (\ref{jb1}) show that all of them have the same first jet at $q_0\in Q$. This defines a canonical projection $\zt:\sT^*(L^\ti)\to \sJ^1(L^*)$, which in the adapted coordinates $(s,q^i,p,p_j)$ in $\sT^*\Lt$ and the adapted coordinates $(z,q^i,\p_j)$ in $\sJ^1(L^*)$ (so that $j^1(S)(q)=(S(q),q_i,\frac{\pa S}{\pa q^j}(q)$) reads $\zt(s,q^i,p,p_j)=(p,q^i,p_j/s)$ and has fibers being orbits of the $\R^\ti$-action. Hence, $M=\sJ^1(L^*)=\sT^*(L^\ti)/\Rt$, and $\zt$ is exactly the projection $P\to P/\Rt=M$.
It  is easy to see that the $\Rt$-action and  the multiplication by reals  in the  vector bundle $\sT^*(\Lt)$ commute, so that we get a \emph{double principal-vector bundle structure} on  $\sT^*(\Lt)$ (cf. \cite{Grabowski:2013}),
$$\xymatrix{
\sT^*(L^\ti)\ar[rr]^{\zt} \ar[d]^{\zp_{\Lt}} && \sJ^1(L^*)\ar[d]^{\zt^1(L^*)} \\
{L^\ti}\ar[rr]^{{\zt_0}} && Q\,. }
$$
This is a particular example of a \emph{weighted principal bundle} (more precisely, a \emph{$\VB$-principal bundle}) defined in \cite{Grabowska:2022}.

\medskip\noindent The symplectic $\Rt$-principal bundle $P=\sT^*(L^\ti)$ we  call the \emph{canonical contact structure} on $\sJ^1(L^*)$. For the chosen local trivialization of $L$ (thus $L^*$), the corresponding local contact 1-form $\zh$ on $\sJ^1(L^*)$ has the Darboux form
$$\zh=\xd z-\p_i\,\xd q^i\,.$$
The canonical line bundle $L_P$ associated with $P$ is the base manifold of the $\Rt$-principal bundle $P\ti\R$ with the action $s.(x,u)=(\hat h_{s}(x),s^{-1}\cdot u)$, where $x\in P$ and $u\in\R$, i.e.,
$$s.(\za_{v_q},u)=(s\cdot\za_{s\cdot v_q},s^{-1}\cdot u)\,.$$
Considering separately the action $\hat h$ on $P$ and the action $s.(v_q,u)=(s\cdot v_q,s^{-1}\cdot u)$ on $L^\ti\ti\R$, we get $L_P$ in the form $L_P=\sJ^1(L^*)\ti_Q L$ with the identification
$$L_P\ni[(\za_{v_q},u)]\mapsto([\za_{v_q}],[(v_q,u)])\in\sJ^1(L^*)\ti_QL\,,$$
where $[\cdot]$ denotes the corresponding classes of the $\R^\ti$-actions.
Hence,
\be\label{fjs} L^*_P=\sJ^1(L^*)\ti_QL^*\,,\ee
with the obvious projections onto $\sJ^1(L^*)$ as the first factor. In other words, we have the canonical identification
$$ P=\sT^*(L^\ti)=\sJ^1(L^*)\ti_QL^\ti\,.$$
Note that the images $\sj^1(S)(Q)$ of first jets of sections $S$ of the line bundle $L^*\to Q$ are Legendre submanifolds in  $\sJ^1(L^*)$. Indeed, for the canonical contact structure $\zt:\sT^*(L^\ti)\to\sJ^1(L^*)$ on $\sJ^1(L^*)$ we have
$$\zt^{-1}\left(\sj^1(S)(Q)\right)=(\xd\,\zi_S)(L^\ti)\,,$$
and the latter submanifold of $\sT^*(L^\ti)$ is clearly Lagrangian.

\medskip\noindent
Let us consider now the trivial line bundle $L$, so the canonical contact manifold $M=\sJ^1(Q,\R)=\R\ti\sT^*Q$ with the contact form $\zh(z,q^i,\p_j)=\xd z-\p_i\xd q^i$. The contact structure is represented by the trivial $\R^\ti$-principal bundle $P=\R^\ti\ti M$ with the homogeneous symplectic form $\zw_\zh=\xd s\we\zh+s\,\xd\zh$. According to \cite{Grabowski:2013}, we can also write
$$P=\sT^*(\R^\ti\ti Q)=\R^\ti\ti\R\ti\sT^*Q$$ which is equipped with the canonical symplectic form $\zw=\zw_{\R^\ti\!\ti Q}$. In local coordinates $(s,q^i,z,p_j)$ on $\sT^*(\R^\ti\ti Q)$ and coordinates $(z,q^i,\p_j)$ on $\sJ^1(Q,\R)$ the symplectic form $\zw$ reads
$$\zw=\xd s\we\xd z+\xd q^i\we\xd p_i\,,$$
while the $\R^\ti$-action $h$ on $P$ is the cotangent lift of the canonical $\R^\ti$-action on $\R^\ti\ti Q$ and takes the form
$$h_{s_0}(s,q^i,z,p_j)=(s_0\cdot s,q_i,z,s_0\cdot p_j)\,.$$
The projection $\zt:P\to M$ in these coordinates looks like
$$\zt(s,q^i,z,p_j)=(z,q^i,\p_j=p_j/s)\,.$$
It is easy to see that the canonical symplectic form $\zw_{\R^\ti\ti Q}$ coincides with $\zw_\zh$.
Writing $\zw_{\R^\ti\ti Q}$ in coordinates $(s,q^i,z,\p_j)$, we get
$$\zw_{\R^\ti\!\ti Q}=\xd s\we\xd z+\xd q^i\we\xd(s\,\p_i)=\xd s\we(\xd z-\p_i\xd q^i)-s\,\xd \p_i\we\xd q^i\,.$$
\end{example}

\section{Hamiltonian dynamics on contact manifolds}
The commonly accepted in the literature approach to contact Hamiltonian dynamics is constructed almost exclusively only for trivial contact manifolds, i.e. manifolds $M$ equipped with a globally defined contact 1-form $\zh$.

\medskip
For a real valued function ${\hat H}$ (\emph{contact Hamiltonian}) on a contact manifold $({M},\eta)$, the corresponding \emph{contact Hamiltonian vector field} $X^c_{\hat H}$ is a vector field on $M$ defined as the unique one satisfying
\begin{equation}
i_{X^c_{{\hat H}}}\eta =-{\hat H},\qquad i_{X^c_{{\hat H}}}\xd\eta =\xd {\hat H}-\mathcal{R}({\hat H}) \eta\,,   \label{contact}
\end{equation}%
where $\mathcal{R}$ is the Reeb vector field for $\zh$. In this sense, a \emph{contact Hamiltonian system} is the triple  $({M},\eta,{\hat H})$. Since
\begin{equation}\label{L-X-eta}
\mathcal{L}_{X^c_{{\hat H}}}\eta =
\xd\,i_{X^c_{{\hat H}}}\eta+i_{X^c_{{\hat H}}}\xd\eta= -\mathcal{R}({\hat H})\eta\,,
\end{equation}
$X^c_{\hat H}$ is a contact vector field on $M$ with the conformal factor $\lambda=\mathcal{R}({\hat H})$.
In this realization, the contact Jacobi bracket of two smooth functions on ${M}$ is defined by
\begin{equation}\label{cont-bracket}
\{\hat  F,\hat H\}_\zh=i_{[X^c_{\hat F},X^c_{\hat H}]}\eta\,.
\end{equation}

According to \eqref{L-X-eta}, the flow of a contact Hamiltonian vector field preserves the contact structure, but it does not preserve neither the contact one-form nor the Hamiltonian function. Instead we obtain
$${\mathcal{L}}_{X^c_{\hat H}} \, {\hat H} = - \mathcal{R}({\hat H}) {\hat H}\,.$$
Referring to Darboux coordinates $(z,q^i,p_j)$, the {H}amiltonian vector field determined in \eqref{contact}, is computed to be
\begin{equation}\label{con-dyn}
X^c_{\hat H}=\frac{\partial {\hat H}}{\partial p_i}{\partial_{q^i}}  - \left(\frac{\partial {\hat H}}{\partial q^i} + \frac{\partial {\hat H}}{\partial z} p_i \right)
{\partial_{p_i}} + \left(p_i\frac{\partial {\hat H}}{\partial p_i} - {\hat H}\right){\partial_z},
\end{equation}
whereas the contact Jacobi bracket \eqref{cont-bracket} is
\begin{equation}\label{Lag-Bra}
\{\hat F,{\hat H}\}_\zh = \frac{\partial \hat F}{\partial q^i}\frac{\partial {\hat H}}{\partial p_i} -
\frac{\partial \hat F}{\partial p_i}\frac{\partial {\hat H}}{\partial q^i} + \left(\hat F  - p_i\frac{\partial \hat F}{\partial p_i} \right)\frac{\partial {\hat H}}{\partial z} -
\left({\hat H}  - p_i\frac{\partial {\hat H}}{\partial p_i}\right)\frac{\partial \hat F}{\partial z}.
\end{equation}
So, the Hamilton's equations for ${\hat H}$ read
\begin{equation}\label{conham}
\dot{q}^i= \frac{\partial {\hat H}}{\partial p_i}, \qquad \dot{p}_i = -\frac{\partial {\hat H}}{\partial q^i}-
p_i\frac{\partial {\hat H}}{\partial z}, \quad \dot{z} = p_i\frac{\partial {\hat H}}{\partial p_i} - {\hat H}.
\end{equation}
Of course, for general contact structures on $M$ equations (\ref{contact}) do not make any sense, since the Reeb vector field is not defined; the Reeb vector fields associated with different local contact 1-forms generating the contact structure are different, so do not produce any geometrical object. This also the reason why the vector field, called \emph{evolution vector field}, defined in the literature by $\mathcal{E}_{\hat H} = X_ {\hat H} + {\hat H}\cR$, has generally no geometrical meaning.
On the other hand, the Reeb vector field is not needed at all; the contact dynamics is well (and much simpler) defined if we assume that Hamiltonian functions for a contact structure $(M,C)$ live actually on its symplectic cover $(P,\zt,M,\zw,h)$, and not on $M$.
\begin{definition}
Let $(P,\zt,M,\zw,h)$ be a symplectic cover of a contact manifold $(M,C)$. A \emph{contact Hamiltonian} for this contact structure is a 1-homogeneous function $H:P\to\R$, i.e., $H\circ h_s=s\cdot H$.
\end{definition}
\begin{theorem}
Let $(P,\zt,M,\zw,h)$ be the symplectic cover of a contact manifold $(M,C)$ and let $H:P\to\R$ be a contact Hamiltonian. Then  the Hamiltonian vector field $X_H$ on $P$ associated with the symplectic form $\zw$ is $\zt$-projectable.
\end{theorem}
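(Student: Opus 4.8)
The plan is to show that $X_H$ is $\zt$-projectable by verifying that it is invariant under the principal $\Rt$-action, i.e.\ $(h_s)_* X_H = X_H$ for all $s\in\Rt$, and that it is tangent to the fibres' complement in the right sense — more precisely, that $\zt_*(X_H)_p$ depends only on $\zt(p)$. A vector field on $P$ descends to $M=P/\Rt$ exactly when it is $\Rt$-invariant \emph{and} projects to a well-defined vector field, and for a principal action $\Rt$-invariance already guarantees that the pushforwards at points in the same orbit agree once we also know the vertical component is killed upon projection (which is automatic). So the crux is $\Rt$-invariance of $X_H$.

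First I would recall that $X_H$ is defined by $i_{X_H}\zw=\xd H$, and compute how the defining equation transforms under $h_s$. Since $\zw$ is $1$-homogeneous, $h_s^*\zw = s\cdot\zw$, and since $H$ is $1$-homogeneous, $h_s^*(\xd H)=\xd(H\circ h_s)=\xd(s\cdot H)=s\cdot\xd H$. Then for the pulled-back field $(h_s^{-1})_*X_H$ one gets, using naturality of interior product under diffeomorphisms,
\[
i_{(h_{s})_* X_H}\,\zw \;=\; (h_{s^{-1}})^*\big(i_{X_H}\,(h_{s})^*\zw\big)\;=\;(h_{s^{-1}})^*\big(s\cdot\xd H\big)\;=\;s^{-1}\cdot\big(s\cdot \xd H\big)\;=\;\xd H,
\]
so by nondegeneracy of $\zw$, $(h_s)_* X_H = X_H$. (I would double-check the exponents here: $(h_{s^{-1}})^* = (h_s^{-1})^*$ scales a $1$-homogeneous form by $s^{-1}$, which matches.) Hence $X_H$ is $\Rt$-invariant.

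Next, to get actual projectability in the strong sense of the statement (that $\zt_*X_H$ is a well-defined vector field on $M$, not merely that $X_H$ is invariant), I would note that $\Rt$-invariance of $X_H$ means precisely that $X_H$ is $h_s$-related to itself for every $s$; applying $\zt_*$ and using $\zt\circ h_s=\zt$ gives $\zt_*(X_H)_{h_s(p)} = \zt_*(X_H)_p$ for all $s$, i.e.\ the value of $\zt_*X_H$ at a point $x\in M$ is independent of the choice of $p\in\zt^{-1}(x)$. Smoothness of $\zt_*X_H$ then follows from local triviality of the principal bundle (work in a trivialization $P|_U\simeq\Rt\times U$ and observe the coefficients in the $x^i$-directions are $s$-independent, hence descend). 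This defines the contact Hamiltonian vector field $X^c_H:=\zt_*(X_H)$ on $M$.

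I do not expect a serious obstacle here — the whole argument is two lines of Lie-theoretic bookkeeping once one accepts that $\zw$ and $H$ are both $1$-homogeneous, so that $X_H$ comes out $0$-homogeneous (i.e.\ invariant). The only point requiring a little care is the sign/exponent bookkeeping in $i_{(h_s)_*X_H}\zw$, making sure the factor of $s$ from $h_s^*\zw=s\zw$ exactly cancels the factor from $h_s^*\xd H = s\,\xd H$; it is perhaps cleanest to phrase this via the Euler vector field $\n_P$ as $[\n_P, X_H]=0$, using $\Ll_{\n_P}\zw=\zw$, $\Ll_{\n_P}\xd H=\xd H$ and the identity $i_{[\n_P,X_H]}\zw = \Ll_{\n_P}(i_{X_H}\zw)-i_{X_H}(\Ll_{\n_P}\zw) = \xd H - \xd H = 0$, together with the $(h_{-1})$-parity check to upgrade this from connectedness-of-$\R_{>0}$ invariance to full $\Rt$-invariance.
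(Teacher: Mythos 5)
Your proposal is correct and follows essentially the same route as the paper: both establish $(h_s)_*X_H=X_H$ from the $1$-homogeneity of $\zw$ and of $\xd H$ via the naturality of the interior product (the two displayed computations are the same up to rearrangement), and then invoke the standard fact that $\Rt$-invariant vector fields on a principal bundle are $\zt$-projectable, which the paper states as well-known and you spell out. Your closing remark about the Euler-field formulation $[\n_P,X_H]=0$ plus the $(h_{-1})$-parity check is a fine equivalent packaging, consistent with the paper's definition of homogeneity.
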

\begin{proof}
Note first that the Hamiltonian vector field $X_H$ is of weight 0, i.e. it is invariant with respect to the $\R^\ti$-principal action, $(h_s)_*(X_H)=X_H$. Indeed, as $\zw$ is 1-homogeneous
and $\xd H$ is also 1-homogeneous, the vector field $X_H$ is homogeneous of weight 0, since $\zw$ is non-degenerate. Explicitly,
$$s^{-1}\cdot i_{(h_s)_*(X_H)}\zw=i_{(h_s)_*(X_H)}(h_{s^{-1}})^*(\zw)=(h_{s^{-1}})^*(i_{X_H}\zw)
=(h_{s^{-1}})^*(\xd H)=s^{-1}\cdot\xd H\,,$$
so that $(h_s)_*(X_H)=X_H$. In other words, the Hamiltonian vector field is invariant with respect to the $\R^\ti$-principal action on $P$. It is well-known that such vector fields on principal bundles are projectable. In fact, they can be identified with sections of the corresponding Atiyah algebroid, and the projections represent the anchors of these sections.
\end{proof}
\begin{definition}
In the notation of the above theorem, we call the vector field $X^c_H=\zt_*(X_H)$ on $M=P/\R^\ti$ the \emph{contact Hamiltonian vector field} associated with the Hamiltonian $H$.
\end{definition}
\begin{remark}
According to Proposition \ref{prop1}, the 1-homogeneous Hamiltonians $H$ on $P$ can be identified with sections $\zs$ of the dual line bundle $\zt^*_P:L^*_P\to M$ as Hamiltonians $H_\zs=\zi_\zs$. We will call them \emph{Hamiltonian sections of the contact structure}. The contact Hamiltonian vector field $X_{\zi_\zs}$ we will denote simply $X_\zs$, and the corresponding contact Hamiltonian vector field with $X^c_\zs$. The homogeneous symplectic form $\zw$ induces the \emph{contact Jacobi bracket} $\{\zs,\zs'\}_\zw^J$ of sections of $L^*_P$.
We get the contact Jacobi bracket (\ref{Lag-Bra}) for trivial contact structures expressed in the corresponding Darboux coordinates.
Another way of defining the contact dynamics directly from a section $\zs$ is based on the observation that to a Hamiltonian section $\zs$ of $\zt^*_P:L^*_P\to M$ there corresponds a linear first-order differential operator
$$D_\zs=\ad_{\zs}:L^*_P\to L^*_P\,,\quad  D_{\zs}(\zs')=\{\zs,\zs'\}^J_\zw\,.$$
The contact Hamiltonian vector field $X^c_\zs$ on $M$ is represented simply by the principal symbol of $D_{\zs}$, i.e.,
$$D_\zs(f\zs')-f\,D_\zs(\zs')=X^c_\zs(f)\,\zs'\,.$$
\end{remark}
\begin{example}
Let $P=\R^\ti\ti M$ be the trivial $\R^\ti$-principal bundle with coordinates $(s,y=(y^a))$. Any trivial contact structure on $P$ consists of the symplectic form $\zw_\zh$ associated with a contact 1-form $\zh$ on $M$ and defined by $\zw_\zh=\xd s\we\zh+s\,\xd\zh$. If $\cR$ is the Reeb vector field for $\zh$, then $\cR$ viewed as homogeneous vector field of weight 0 on $P$, $\cR(s,y)=\cR(y)$ is the Hamiltonian vector field with respect to $\zw_\zh$ with the Hamiltonian $-s$, $i_\cR\zw_\zh=-\xd s$. Let us take a function $\hat H$ on $M$ and consider the homogeneous Hamiltonian
$$H:P\to\R\,,\quad H(s,y)=s\cdot\hat H(y)\,.$$
The function $\hat H$ we will call a \emph{reduced contact Hamiltonian}. The corresponding Hamiltonian vector field $X_H$ is homogeneous of weight 0, so that
$X_H(s,y)=F(y)\,s\cdot\pa_s+Y(y)$, where $F$ is a (pull-back of) function on $M$ and $i_Y\xd s=0$, so that the vector field $Y$ can be viewed as tangent to $M$ and therefore identified with the contact Hamiltonian vector field $X^c_H=(\zt)_*X_H$. In other words,
\be\label{cHvf} X_H(s,y)=s\,F(y)\cdot\pa_s+X^c_H(y)\,.\ee
We have
$$(sF)\cdot\zh-(i_{X^c_H}\zh)\,\xd s+s\cdot i_{X^c_H}\xd\zh=\hat H\,\xd s+s\,\xd\hat H\,,$$
so that $i_{X^c_H}\zh=-\hat H$ and
$$i_{X^c_H}\xd\zh=\xd\hat H-F\,\zh\,.$$
Contracting both sides with $\cR$, we get $F=\cR(\hat H)$, and in this way we reconstructed the equations (\ref{contact}) for the reduced contact Hamiltonian $\hat H$, and in a Darboux coordinates $y=(z,q^i,p_j)$ for $\zh$ we recover the contact Hamilton equations (\ref{conham}).
\end{example}
\begin{remark}
The above example shows that, despite of the fact that (\ref{conham}) are formulated in terms of objects not having a geometrical meaning for a general contact structure on $M$, they define properly the contact Hamiltonian dynamics if only we understand it as induced from a 1-homogeneous Hamiltonian $H$ on the symplectic $\R^\ti$-principal bundle $\zt:P\to M$, equipped with a 1-homogenous symplectic form $\zw$. Any local trivialization
$P=\R^\ti\ti M$ with elements $(s,x)$, where $s\in\R^\ti$ and $x\in M$, defines the local reduced contact Hamiltonian $\hat H=s^{-1}\,H$, the local contact 1-form $\zh=s^{-1}i_{\n_P}\zw$, and the local Reeb vector field understood as the semi-basic and projectable vector field $\cR$ on $P$ being the Hamiltonian vector field for the Hamiltonian $-s$, i.e., $i_\cR\zw=-\xd s$. The equations (\ref{conham})define in these terms the local form of the contact Hamiltonian vector field $X^c_H$. However, equations (\ref{conham}) look much more complicated than the global expression $X^c_{\hat H}=\zt_*(X_H)$ but nevertheless is correct, even that $\hat H$ has generally no geometrical meaning. Note that, according to Theorem \ref{atlas}, we can always choose local reduced Hamiltonians $\hat H_\za$ such that on intersections of charts $U_\za$ the differ just by sign.

The situation is much different if the evolution vector field is concerned. This vector field  differs from $X^c_{{\hat H}}$ by a vector field whose local expression in our local trivialization is ${\hat H}\cdot\cR$. This vector field has a geometrical meaning only for trivial contact structures, since changing the local trivialization into $(s',x)$, where $s'=g(x)\cdot s$ for a nowhere-vanishing function $g$ on $M$, we get a new local reduced Hamiltonian on $M$ in the form ${\hat H}'(x)=g^{-1}(x)\cdot {\hat H}(x)$, and a new local Reeb vector field $\cR'$. The point is that in general ${\hat H}'\,\cR'\ne {\hat H}\,\cR$.
Indeed, if ${\hat H}'\,\cR'= {\hat H}\,\cR$, then $\cR=g^{-1}\,\cR'$, that is generally not true, since
$$-\xd s=i_\cR\zw=g^{-1}\,i_{\cR'}\zw=-g^{-1}\,\xd s'=-g^{-1}\xd(gs)=-\xd s-s\,\xd(\ln|g|)$$
and $\ln|g|$ is constant only for constant $g$. This is the reason why in this paper we pay no attention to `evolution vector fields'.
\end{remark}
\begin{example} For a manifold $M$ consider the canonical contact structure on the projectivization of the cotangent bundle $\Pe(\sT^*M)$ (see Example \ref{ex}). The symplectic cover is $(\sT^*M)^\ti$ and contact Hamiltonians are functions $H:\sT^*M\setminus\{ 0_M\}\to\R$ such that, in Darboux coordinates $(q^i,p_j)$ on $\sT^*M$,
$$H(q,s\cdot p)=s\cdot H(q,p)\quad\text{for all}\quad s\ne 0\,.$$
In particular, functions linear in momenta, $H(q,p)=f^i(q)\,p_i$, i.e., of the form $\zi_X$, where $X$ is a vector field on $M$, are contact Hamiltonians. Our contact structure, thus the principal bundle $\zt:(\sT^*M)^\ti\to\Pe(\sT^*M)$, is non-trivial for $M$ of an odd dimension. In this case, such Hamiltonians do not allow for reduced contact Hamiltonians $\hat H$ defined on the projectivization $\Pe(\sT^*M)$ of the cotangent bundle, since 1-homogeneous functions on $\zt:(\sT^*M)^\ti$ project through $\zt$ if only if they are invariant with respect to the antipodal map $p\mapsto -p$. This means that our contact Hamiltonian vector fields $X^c_H$ cannot be obtained by the standard methods present in the literature.

A simple numerical example is the following. Put $M=\R^3$, and let $p=(p_1,p_2,p_3)$ be the canonical momenta on $(\sT^*\R^3)$, thus $(\sT^*\R^3)^\ti$, associated with coordinates $q=(q^1,q^2,q^3)$ on $\R^3$. The Hamiltonian $H=q^i\,p_i$ is 1-homogeneous and produces the Hamiltonian vector field $$X_H=\sum_i\left(q^i\pa_{q^i}+p_i\pa_{p_i}\right)$$ on $\sT^*\R^3$, thus on $(\sT^*\R^3)^\ti$. This Hamiltonian vector field projects \emph{via} the canonical projection $\zt:(\sT^*\R^3)^\ti\to\Pe(\sT^*\R^3)$ onto a vector field on $\Pe(\sT^*\R^3)$. This is exactly the contact Hamiltonian vector field $X^c_H$ associated with our 1-homogeneous Hamiltonian. But $\Pe(\sT^*\R^3)=\R^3\ti\Pe\R^2$ is not orientable, so the contact structure there is nontrivial, and our contact Hamiltonian vector field cannot be obtained as the pull-back of a `contact Hamiltonian` understood as a function on the contact manifold $\Pe(\sT^*\R^3)$.

Note finally that 1-homogeneous Hamiltonians on $(\sT^*\R^3)^\ti$ are uniquely determined by  functions on $\R^3\ti S^2$ satisfying $f(q,A(\zg))=\pm f(q,\zg)$, where $A:S^2\to S^2$ is the antipodal map of the sphere $S^2$. They are pull-backs of functions on $\Pe(\sT^*\R^3)$ if and only if $f(q,A(\zg))=f(q,\zg)$.
\end{example}
\section{Time-dependent (nonautonomous) Hamiltonians}
Let $(P,\zw)$ be a symplectic manifold (in what follows one can take also a Poisson tensor instead of the symplectic form $\zw$). A \emph{time-dependent (nonautonomous) Hamiltonian} for $(P,\zw)$ is a function $H:P\ti\R\to\R$. In fact, one can take $H$ to be defined only on a neighbourhood of the submanifold $P\ti\{ 0\}$ in $P\ti\R$, but for simplicity we will work on the whole $P\ti\R$. The corresponding non-autonomous dynamics is represented by the time-dependent vector field $X_t$ on $P$, where $X_t=X_{H_t}$ is the Hamiltonian vector field of $H_t(x)=H(x,t)$ with respect to $\zw$. The trajectory $\zg:(a,b)\to P$ of $X_t$ with the initial condition $\zg(t_0)=x_0$, where $t_0\in(a,b)$, is the unique solution of the differential equation
$$\dot\zg(t)=X_t(\zg(t))\,,\quad \zg(t_0)=x_0\,.$$
A convenient way of thinking about the non-autonomous dynamics $X_t$ is \emph{via} its \emph{autonomization}, i.e. to consider the vector field $\bar X$ on $P\ti\R$, $\bar X(x,t)=X_t(x)+\pa_t$. Then  we consider the autonomous dynamics on $P\ti\R$ induced by $\bar X$. The projection of the trajectory of the vector field $\bar X$ with the initial point $(x_0,t_0)$ to $P$ is exactly the trajectory of the time-dependent vector field $X_t$ with the same initial condition.

To obtain the autonomization of the time-dependent Hamiltonian vector field $X_{H_t}$ we can proceed as follows. On the manifold $\bar P=P\ti\sT^*\R$ consider the symplectic form $\bar\zw=\zw+\zw_\R$, where $\zw_\R$ is the canonical symplectic form on the cotangent bundle $\zp_\R:\sT^*\R\to\R$. Using the canonical global Darboux coordinates $(t,p)$ on $\sT^*\R$ we have
$$\bar\zw(x,t,p)=\zw(x)+\xd t\we\xd p\,.$$
On $\bar P$ consider the Hamiltonian $\bar H(x,t,p)=H(x,t)+p$, and the corresponding Hamiltonian vector field
\be\label{aH}X_{\bar H}(x,t,p)=X_{H_t}(x)-\frac{\pa H}{\pa t}(x,t)\pa_p+\pa_t\,.\ee
If we denote the canonical projection $(\id,\zp_\R):P\ti\sT^*\R\to P\ti\R$ with $\bar\zp$, and the canonical projection $P\ti\sT^*\R\to P$ with $\zp$, then the autonomization of $X_{H_t}$ is $(\bar\zp)_*(X_{\bar H})$ and the trajectories of the time-dependent vector field $X_t=X_{H_t}$ are exactly the projections $\zg(t)=(\zp\circ\bar\zg)(t)$ of trajectories $\bar\zg(t)$ of the Hamiltonian vector field $X_{\bar H}$. In other words, the nonautonomous dynamics associated with a time-dependent Hamiltonian $H(x,t)$ on $P\ti\R$ is given by the time-dependent Hamiltonian vector field $X_{H_t}$.

\subsection{The contact case}
To study nonautonomous contact Hamiltonian dynamics, we will work with autonomizations of contact structures.
\begin{definition}
For a contact manifold $M$ with the contact structure $(P,\zt,M,\zw,h)$ the \emph{autonomization of this contact structure} is the contact structure on the manifold $\bar M=M\ti\sT^*\R$ of the form
$(\bar P,\bar\zt,\bar M,\bar\zw,\bar h)$, where $\bar P=P\ti\sT^*\R=P\ti\R\ti\R^*$ is an $\R^\ti$-principal bundle with the $\R^\ti$-action being the product of $h$ and the standard action on a vector bundle by multiplication of vectors by reals, $\bar h_s(x,t,p)=(h_s(x),t,s\cdot p)$, so that $\bar\zt$ is the canonical projection
$$\bar P=P\ti\sT^*\R\raa\bar P/\R^\ti\simeq \bar M\,,$$
and the symplectic form reads $\bar\zw=\zw+\xd t\we\xd p$ (it is clearly 1-homogeneous).
\end{definition}
\noindent Since $(P\ti\R)/\R^\ti$ for the $\Rt$-action $\bar h_s(x,p)=(h_s(x),s\cdot p)$ is $L^*_P$, we can identify $\bar M$ with $\bar L^*_P=L^*_P\ti\R$ which is canonically a line bundle over $M\ti\R$,
$$\bar\zt^*_P:\bar L^*_P=L^*_P\ti\R\to M\ti\R\,.$$
Hence, the projection $\bar\zt$ acts as
$$\bar\zt:\bar P=P\ti\sT^*\R\to L^*_P\ti\R=\bar L^*_P=M\ti\sT^*\R\,.$$

\medskip
Let now $H:P\ti\R\to\R$ be a time-dependent contact Hamiltonian on $P$, i.e., $H_t=H(\cdot,t)$ is a contact (1-homogeneous) Hamiltonian on $P$ for all $t\in\R$. As above, we define the autonomization of $H$ as the Hamiltonian $\bar H:\bar P=P\ti\sT^*\R\to \R$ of the form $\bar H(x,t,p)=H(x,t)+p$; this Hamiltonian is a contact (1-homogeneous) Hamiltonian on $\bar P$.
We already know that the Hamiltonian vector field $X_{\bar H}$ reads as in (\ref{aH}), and it projects onto the contact Hamiltonian vector field $X^c_{\bar H}$ on $\bar L^*_P$.
Since the projection $\bar\zt$ reads
\be\label{barzt}\bar\zt(x,t,p)=(p\cdot i^*_P(x),t)\,,\ee
where
$$i^*_P:P\to L^*_P\,,\quad i^*_P(x)=[(x,1)]$$
is the canonical embedding, we have the equality $(\zt^*_P,\id_\R)\circ \bar\zt=(\zt,\zp_\R)$ for projections
$$(\zt^*_P,\id_\R)\circ \bar\zt\,,(\zt,\zp_\R):P\ti\sT^*\R\to M\ti\R\,,$$
so $X^c_{\bar H}$ i projectable onto the vector field $X^c_{H_t}+\pa_t$ on $M\ti\R$ which is the autonomization of the time-dependent contact Hamiltonian vector field $X^c_{H_t}$.
In other words, the nonautonomous contact dynamics associated with the time-dependent contact Hamiltonian $H:P\ti\R\to\R$ on $P\ti\R$ is given by the time-dependent Hamiltonian contact vector field $X^c_{H_t}$ on $M$.
\begin{example}
For the trivial (or a local trivialization) principal bundle $\zt:P=\R^\ti\ti M\to M$ and the corresponding local coordinates $(s,y^a)$, $s\ne 0$, where $s$ is the standard coordinate on $\R^\ti\subset\R$ and $y=(y^a)$ are coordinates on $M$, we have $L^*_P=\R^*\ti M$ with coordinates $(z,y)$ and the embedding
$$i^*_P:\R^\ti\ti M\to\R^*\ti M\,,\quad i^*_P(s,y)=(s^{-1},y)\,.$$
The trivialization induces also a contact 1-form $\zh=i_{\pa_s}\zw$ on $M$, so that the symplectic form $\zw$ on $P$ reads
$$\zw(s,y)=\xd s\we\zh(y)+s\cdot\xd\zh(y)\,.$$
The projection $\bar\zt:\bar P\to\bar L^*_P=L^*_P\ti\R$ takes the form (cf. \ref{barzt})
$$\bar\zt(s,y,t,p)=(z=p/s,y,t)\,.$$
Any function $\hat H(y,t)$ (a reduced time-dependent contact Hamiltonian) on $M\ti\R$ induces a time-dependent contact Hamiltonian $H(s,y,t)=s\cdot\hat H(y,t)$ on $P\ti\R$ whose
autonomization is the function
$$ \bar H(s,y,t,p)=s\cdot \hat H(y,t)+p$$
on $\bar P=P\ti\sT^*\R$.
The corresponding Hamiltonian vector field $X_{\bar H}$ reads (cf. (\ref{cHvf}))
$$X_{\bar H}(s,y,t,p)=s\cdot\cR(\hat H_t)(y)\pa_s+X^c_{\hat H_t}(y)-s\cdot\frac{\pa\hat H}{\pa t}(y,t)\pa_{p}+\pa_t\,,$$
where $X^c_{\hat H_t}$ is uniquely determined by (cf. \ref{contact})
\begin{equation}\label{tdHvf}
i_{X^c_{\hat H_t}}\eta =-\hat H_t\,,\qquad i_{X^c_{\hat H_t}}\xd\eta =\xd\zs_t-\mathcal{R}(\hat H_t)\,\eta\,,
\end{equation}%
and $\cR$ is the Reeb vector field for the contact form $\zh$.
Since in coordinates $(s,y,t,p)$ on $\bar P$ we have $p=s\cdot z$, the Hamiltonian vector field $X_{\bar H}$ in these coordinates looks like
$$X_{\bar H}(s,y,t,z)=s\cdot\cR(\hat H_t)(y)\pa_s+X^c_{\hat H_t}(y)-\frac{\pa\hat H}{\pa t}(y,t)\pa_{z}+\pa_t\,,$$
so that
$$X^c_{\bar H}(z,y,t)=X^c_{\hat H_t}(y)-\frac{\pa\hat H}{\pa t}(y,t)\pa_{z}+\pa_t\,,$$
where $X^c_{\hat H_t}$ is uniquely determined by equations (\ref{tdHvf}).
Projecting once more, this time on $M\ti\R$, we get the autonomization $X^c_{\hat H_t}+\pa_t$
of the time-dependent contact Hamiltonian vector field $X^c_{\hat H_t}$. The corresponding contact Hamilton equations in Darboux coordinates for $\zh$ read
$$
\dot{q}^i= \frac{\partial \hat H_t}{\partial p_i}, \qquad \dot{p}_i = -\frac{\partial \hat H_t}{\partial q^i}-
p_i\frac{\partial \hat H_t}{\partial z}, \quad \dot{z} = p_i\frac{\partial \hat H_t}{\partial p_i} - \hat H_t.
$$
\end{example}

\medskip To present the autonomizations of time-dependent contact Hamiltonians in terms of sections of line bundles, let us observe that we may identify $L^*_{\bar P}$ with $\sV(L^*_P)\ti\R$ which is canonically a line bundle over $\bar L^*_P$,
$$\zt^*_{\bar P}:L^*_{\bar P}=\sV(L^*_P)\ti\R=(L^*_P\ti_ML^*_P)\ti\R\to L^*_P\ti\R=\bar L^*_P\,.$$
Here, for any vector bundle $E\to M$ we denote with $\sV E\subset\sT E$ the corresponding vertical bundle (vertical subbundle in $\sT E$), and with $\sV^*E$ its dual. It is well known that $\sV E=E\ti_ME$, so that $\sV^* E=E\ti_ME^*$.  Indeed, the line bundle $L^*_{\bar P}$ is the manifold of orbits of the $\Rt$-principal bundle $P\ti\sT^*\R\ti\R^*$ with the $\Rt$-action
$$\bar h_s(x,t,p,u)=(h_s(x),t,s\,p,s\,u)\,,$$
that immediately leads to this identification. Consequently, the line bundle $L_{\bar P}$ is
$$\zt_{\bar P}:L_{\bar P}=\sV^*(L^*_P)\ti\R=(L^*_P\ti_ML_P)\ti\R\to L^*_P\ti\R=\bar L^*_P\,,$$
and the embedding of $\bar P$ into $L_{\bar P}$ reads
\be\label{wsad}i_{\bar P}:P\ti\sT^*\R\to(L^*_P\ti_ML_P)\ti\R\,,\quad
i_{\bar P}(x,t,p)=\left(p\cdot i^*_P(x),i_P(x),t\right)\,.\ee
It is convenient to identify sections of the line bundle $L^*_{\bar P}$ with time-dependent sections of $\sV(L^*)$; to a time-dependent section $\zr:L^*_P\ti\R\to\sV(L^*)$ there corresponds the autonomous section $\zr^a$ of $L^*_{\bar P}$,
$$\zr^a(v_y,t)=(\zr(v_y,t),t)\,.$$
Note that the line bundle $\sV(L^*)$ has a distinguished section $\zm$ represented by the identity map $\zm(v)=(v,v)$, where $v\in L^*_P$.
With every time-dependent section $\zs:M\ti\R\to L^*_P$ we associate the time-dependent contact Hamiltonian
$$H_\zs:P\ti\R\to\R\,,\quad H_\zs(x,t)=H_{\zs_t}(x)=\zi_{\zs_t}(x)\,.$$
As we already know, the autonomization of this contact Hamiltonian is the autonomous contact Hamiltonian $\bar H_\zs$ on $\bar P$,
$$\bar H_\zs:\bar P=P\ti\sT^*\R\to\R\,,\quad \bar H_\zs(x,t,p)=H_\zs(x,t)+p\,.$$
Hence, $\bar H_\zs=H_{\bar\zs}$ for a time-dependent section $\bar\zs:\bar L^*_P\to\sV(L^*_P)$. The section $\bar\zs$ reads
$$\bar\zs(v_y,t)=\zm(v_y)+\zs(y,t)\,,$$
where $v_y\in(L^*_P)_y$.  Indeed, $\zi_{\bar\zs}$ as a linear function on $L_{\bar P}$ reads
$$\zi_{\bar\zs}(v_y,u_y,t)=\la u_y,\zs_t(y)+v_y\ran\,,$$
where $v_y\in(L^*_P)_y$ and $u_y\in (L_P)_y$\,, $y\in M$.
Using the form (\ref{wsad}) of the embedding $i_{\bar P}$, we get $\zi_{\bar\zs}$ restricted to $\bar P$ in the form
$$\zi_{\bar\zs}(x,t,p)=\la i_P(x),\zs_t(\zt(x))+p\cdot i^*_P(x)\ran=\zi_{\zs_t}(x)+p\cdot\la i_P(x),i^*_P(x)\ran=\bar H_\zs(x,t,p)\,.$$
Hence, we get the following.
\begin{theorem}
Let $\zs:M\ti\R\to L^*_P$ be a time-dependent section of the line bundle $\zt^*_P:L^*_P\to M$. Then  the Hamiltonian vector field $X_{\bar\zs}$ on the symplectic $\Rt$-principal bundle $\bar P=P\ti\sT^*\R$ described above projects under the obvious projection $(\zt,\zp_\R):P\ti\sT^*\R\to M\ti\R$ onto the autonomization $X^c_{\zs_t}+\pa_t$ of the time-dependent contact Hamiltonian vector field $X^c_{\zs_t}$ on $M$.
\end{theorem}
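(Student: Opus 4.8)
The plan is to recognise that the statement merely repackages, in terms of the Hamiltonian sections of the contact structure, the projection facts established earlier in this section, so that the proof amounts to collecting them.

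First I would recall the computation performed just before the statement: along the embedding $i_{\bar P}$ of (\ref{wsad}) the linear function $\zi_{\bar\zs}$ on $L_{\bar P}$ restricts to $\zi_{\bar\zs}(x,t,p)=\zi_{\zs_t}(x)+p=\bar H_\zs(x,t,p)$, where $\bar H_\zs$ is the autonomization of the time-dependent contact Hamiltonian $H_\zs$. Hence the Hamiltonian vector field $X_{\bar\zs}$ of $\zi_{\bar\zs}$ on $(\bar P,\bar\zw)$, with $\bar\zw=\zw+\xd t\we\xd p$, coincides with $X_{\bar H_\zs}$, and formula (\ref{aH}) applied to $H=H_\zs$ (so that $H_t=(H_\zs)_t=\zi_{\zs_t}$) gives
\[
X_{\bar\zs}(x,t,p)=X_{\zi_{\zs_t}}(x)-\frac{\pa H_\zs}{\pa t}(x,t)\,\pa_p+\pa_t ,
\]
where $X_{\zi_{\zs_t}}$ is the Hamiltonian vector field on the symplectic $\Rt$-principal bundle $(P,\zw)$ of the 1-homogeneous function $\zi_{\zs_t}$ (Proposition \ref{prop1}).

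Next I would push $X_{\bar\zs}$ forward by $(\zt,\zp_\R)$. The $\pa_p$-term is $(\zt,\zp_\R)$-vertical and disappears, the $\pa_t$-term maps to $\pa_t$ on the $\R$-factor, and for each fixed $t$ the projectability theorem of Section 4 shows that $X_{\zi_{\zs_t}}$ is $\zt$-projectable with $\zt_*(X_{\zi_{\zs_t}})=X^c_{\zs_t}$, the contact Hamiltonian vector field of the section $\zs_t$ (by the very definition of $X^c_\zs$). Assembling these over $t\in\R$ yields that the pushforward of $X_{\bar\zs}$ equals $X^c_{\zs_t}+\pa_t$ on $M\ti\R$, which is exactly the autonomization of the time-dependent contact Hamiltonian vector field $X^c_{\zs_t}$. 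Alternatively, one may factor $(\zt,\zp_\R)=(\zt^*_P,\id_\R)\circ\bar\zt$ and simply chain the two projection statements already recorded above: $X_{\bar H_\zs}$ is $\bar\zt$-related to $X^c_{\bar H_\zs}$ on $\bar L^*_P$, and $X^c_{\bar H_\zs}$ is $(\zt^*_P,\id_\R)$-related to $X^c_{\zs_t}+\pa_t$.

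I do not anticipate a substantive obstacle; the geometric content lives entirely in the constructions preceding the statement. The only point that requires care is the bookkeeping among the several line bundles involved — verifying that the distinguished section $\zm$ of $\sV(L^*_P)$ together with $\zs$ indeed realises $\bar H_\zs$ as $\zi_{\bar\zs}$ under the identifications $L^*_{\bar P}\simeq(L^*_P\ti_ML^*_P)\ti\R$ and $L_{\bar P}\simeq(L^*_P\ti_ML_P)\ti\R$ and the embedding $i_{\bar P}$ of (\ref{wsad}) — but this verification has already been carried out, so the theorem follows by assembling it with formula (\ref{aH}) and the projectability theorem of Section 4.
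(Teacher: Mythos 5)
Your proposal is correct and follows essentially the same route as the paper, which states this theorem as an immediate consequence (``Hence, we get the following'') of the preceding identification $\zi_{\bar\zs}\,\big|_{\bar P}=\bar H_\zs$, formula (\ref{aH}), the projectability theorem of Section 4, and the factorization $(\zt,\zp_\R)=(\zt^*_P,\id_\R)\circ\bar\zt$. Your assembly of these ingredients, including the observation that the $\pa_p$-component dies under the projection, is exactly the argument the paper leaves implicit.
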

\noindent The time-dependent contact Hamiltonian vector field $X^c_{\zs_t}$ on $M$ we call \emph{induced by $\zs$}. In other words, the nonautonomous contact dynamics associated with the time-dependent section $\zs$ of $L^*_P$ is represented by the time-dependent contact Hamiltonian vector field $X_{\zs_t}$. Hence, the contact Hamilton equations for time-dependent sections of $L^*_P$ are formally the same as in the autonomous case, with the only difference that we derive the contact Hamiltonian vector fields $X^c_{\zs_t}$ for every fixed time parameter $t\in\R$.
\begin{example}\label{FE}
We will construct an example of a time-dependent contact Hamiltonian for the M\"obius band $B\to S^1$ which is the simplest model of a non-trivializable line bundle. More precisely, our contact manifold will be $\sJ^1B^*$ with the symplectic cover $\zt:\sT^*B^\ti\to\sJ^1B^*$ (cf. Example \ref{jb}).

\medskip\noindent Let us start with two charts $\cO_1$ and $\cO_2$  on $B$ with coordinates $(x,s)\in]0,2\pi[\ti\R$ and $(x',s')\in]\zp,3\zp[\ti\R$, respectively. The intersection $\cO_{12}=\cO_1\cap\cO_2$ consists of two disjoint sets $\cO_{12}=\cO_{12}^1\cup\cO_{12}^2$ represented by $]\zp,2\zp[\ti\R$ in $\cO_1$ and $\cO_2$, and $\cO_{12}^2$ represented by $]0,\zp[\ti\R$ in $\cO_1$ and with $]2\pi,3\zp[$ on $\cO_2$, with the transition functions
$(x',s')=(x,s)$ on $\cO_{12}^1$, and $(x',s')=(x+2\pi,-s)$ on $\cO_{12}^2$.
Since the transition function on $]\zp,2\zp[$ is the identity, it will be more convenient to look at $B$ as obtained from the trivial line bundle $L=]0,3\zp[\ti\R$, with global coordinates $(x,s)$, by gluing $]0,\zp[\ti\R$ with $]2\zp,3\zp[\ti\R$ by the map $(x,s)\mapsto(x+2\zp,-s)$. This is how we show the M\"obius band to children. In this picture $B^\ti$ is obtained in almost the same way, with the only difference that we admit only $s\ne 0$. Now, sections $\zs$ of $B$ are identified with functions $\zs:]0,3\zp[\to\R$ respecting the gluing, i.e., such that
$\zs(x+2\zp)=-\zs(x)$ for $x\in]0,\zp[$. This makes clear that every section must take the value 0, so the bundle $B$ is non-trivializable.

\medskip\noindent From this description of $B$, thus $B^\ti$, we obtain an analogous description of $\sT^*B^\ti$, being $\sT^*L^\ti\simeq]0,3\zp[\ti\R^3$, with global coordinates $(x,s,p,z)$, and subject of gluing by the map
$$(x,s,p,z)\mapsto(x+2\zp,-s,p,-z)\quad\text{for}\quad x\in]0,\zp[\,.$$
The cotangent bundle $\sT^*B^\ti$ is canonically a symplectic $\Rt$-principal bundle, with the $\Rt$-action (cf. Example \ref{pl}) represented by
$$s_0\cdot(x,s,p,z)=(x,s_0\cdot s,s_0\cdot p,z)$$ on $\sT^*L^\ti$
(this respects the gluing), and with the symplectic form $\zw$ represented by the canonical symplectic form
$$\zw=\xd x\we\xd p+\xd s\we\xd z$$ on $\sT^*\Lt$ (it is easy to see that this form is invariant with respect to the gluing).

\medskip\noindent Since we have two homogeneous coordinates of degree 1, namely $s$ and $p$, it is convenient to change the coordinates so, that we will have a manifestly trivial $\Rt$-principal bundle (with one homogeneous coordinate of degree 1). Like in Example \ref{jb}, we just replace the coordinate $p$ with the homogeneous coordinate $\p=p/s$ of degree 0, so that $(x,\p,z)$ are coordinates in $\sJ^1L^*$, representing coordinates in the corresponding two charts in $\sJ^1B^*$ \emph{via} gluing. In the new coordinates, the gluing in $\sT^*\Lt$ looks like
$$(x,s,\p,z)\mapsto(x+2\zp,-s,-\p,-z)\quad\text{for}\quad x\in]0,\zp[$$
and the symplectic form like (see Example \ref{jb})
\be\label{sf}\zw=\xd s\we(\xd z-\p\,\xd x)+s\cdot\xd x\we\xd\p\,.\ee
This symplectic form on $\sT*\Lt$ is invariant with respect to the gluing, so it represents a symplectic form on $\sT^*B^\ti$, and it is the symplectization of the contact form $\zh=\xd z-\p\,\xd x$ on $\sJ^1L^*$, that means that the canonical contact structure on $\sJ^1L^*$ is trivial. But this contact form is not invariant with respect to gluing, since the gluing map changes $\zh$ to $-\zh$. This is because the canonical contact structure on $\sJ^1B^*$ is non-trivializable and represented by $\zh$ and $-\zh$ in our two charts.

\medskip\noindent A 1-homogeneous Hamiltonian $H(x,s,\p,z)=s\cdot\hat H(x,\p,z)$ on $\sT^*\Lt$ represents a 1-homogeneous Hamiltonian on $\sT^*B^\ti$ if and only if it respects the gluing, i.e.,
$$s\cdot\hat H(x,\p,z)=-s\cdot\hat H(x+2\zp,-\p,-z)\quad\text{for}\quad x\in]0,\zp[.$$
In consequence, the reduced Hamiltonian $\hat H$ on $\sJ^1L^*$ should satisfy
\be\label{hh}\hat H(x,\p,z)=-\hat H(x+2\zp,-\p,-z)\quad\text{for}\quad x\in]0,\zp[.\ee
This also shows that such a $\hat H$ does not define any function (reduced contact Hamiltonian) on $\sJ^1B^*$, since it is not invariant with respect to the gluing in $\sJ^1L^*$. It defines functions in our two charts which differ by sign. This reflects the fact that, since the contact structure is non-trivializable, contact Hamiltonians for $\sJ^1B^*$ (i.e., 1-homogeneous functions on $\sT^*B^\ti$) cannot be represented by functions on $\sJ^1B^*$. This is exactly the difference with the standard approach to contact Hamiltonian mechanics. We can also consider time-dependent Hamiltonians for which $\hat H$ depends additionally on $t\in\R$, $\hat H_t(x,\p,z)=\hat H(x,\p,z,t)$, and each of $\hat H_t$ satisfies (\ref{hh}).

\medskip\noindent As a numerical example let us take
$$\hat H_t(x,\p,z)=\frac{\cos(x/2)}{2}\left(\p^2-z^2\right)+f(t)\sin(x/2)\p z,$$
where $f=f(t)$ is any function, which clearly satisfies (\ref{hh}) for each $t$. This is particularly interesting Hamiltonian, since for $f(t)\ne 0$ it is hyperregular in the contact sense (cf. \cite{Grabowska:2022a}), i.e., the Legendre map
$$(x,\p,z)\mapsto\left(x,\frac{\pa\hat H_t}{\pa \p}(x,\p,z),\frac{\pa\hat H_t}{\pa z}(x,\p,z)\right)$$
is a diffeomorphism. Indeed, for each $x$ the Legendre map is linear in coordinates $\p,z$ with the matrix
$$\begin{bmatrix}
\cos(x/2) & \ f(t)\sin(x/2)\\
f(t)\sin(x/2) & -\cos(x/2)\end{bmatrix}$$
which is clearly invertible.

\medskip To obtain the contact Hamiltonian vector field $X^c_{H_t}$, we calculate first the standard Hamiltonian vector field $X_{H_t}$ on $\sT^*L^\ti$, where $H_t(x,s,\p,z)={s\cdot\hat H_t(x,\p,z)}$. We have
\beas\xd(s\cdot\hat H_t)&=&\hat H_t\,\xd s+s\big(\cos(x/2)\p+f(t)\sin(x/2)z\big)\xd\p
+s\big(-\cos(x/2)z+f(t)\sin(x/2)\p\big)\\
&&+\frac{s}{4}\big(\sin(x/2)\left(z^2-\p^2\right)+2f(t)\cos(x/2)\p z\big)\xd x.
\eeas
This, together with the symplectic form (\ref{sf}), gives the Hamiltonian vector field
\beas X_{H_t}&=&s\Big(f(t)\sin(x/2)\p-\cos(x/2)z\Big)\frac{\pa}{\pa s}\\
&&+\Big(\cos(x/2)\p+f(t)\sin(x/2)z\Big)\frac{\pa}{\pa x} +\frac{\cos(x/2)}{2}\Big(\p^2+z^2\Big)\frac{\pa}{\pa z}\\
&&-\bigg(\frac{\sin(x/2)}{4}\Big(z^2+\big(4f(t)-1\big)\p^2\Big)
+\frac{\cos(x/2)}{2}\Big(f(t)-2\Big)\p z\bigg)\frac{\pa}{\pa\p}\,.
\eeas
As it should be, the vector field $X_{H_t}$ is projectable onto $\sJ^1L^*$ giving the contact Hamiltonian vector field
\beas X^c_{H_t}&=&\Big(\cos(x/2)\p+f(t)\sin(x/2)z\Big)\frac{\pa}{\pa x} +\frac{\cos(x/2)}{2}\Big(\p^2+z^2\Big)\frac{\pa}{\pa z}\\
&&-\bigg(\frac{\sin(x/2)}{4}\Big(z^2+\big(4f(t)-1\big)\p^2\Big)
+\frac{\cos(x/2)}{2}\Big(f(t)-2\Big)\p z\bigg)\frac{\pa}{\pa\p}\,.
\eeas
One can immediately see that, indeed, this vector field respects the gluing $(x,\p,z)\mapsto(x+2\zp,-\p,-z)$, so it represents the corresponding time-dependent contact Hamiltonian vector field on $\sJ^1B^*$.
\end{example}
\begin{remark}
We should note that an interesting time-dependent contact dynamics is described in \cite{deLeon:2022} with the use of the concept of a \emph{cocontact manifold}, with the extended contact manifolds $\R\ti\sT^*\R\ti\R$ as working examples. The difference with our approach is that these are not the time-dependent Hamiltonians in our sense, but \emph{cocontact Hamiltonians}. However, it seems that, at least for trivial cocontact manifolds, the resulted time-dependent dynamics coincides with ours. In any case, a deeper comparison of the two approaches deserves further studies.
\end{remark}
\section{Contact Hamilton-Jacobi theory}
\subsection{Classical Hamilton-Jacobi equations}
In our understanding, the geometrical essence of the Hamilton-Jacobi theory is based on the following trivial observation.
\begin{theorem}\label{HJ}
Let $(P,\zw)$ be a symplectic manifold, $H:P\to\R$ be a Hamiltonian function, and $\cL\subset P$ be a Lagrangian submanifold. Then  $H\,\big|_\cL$ is closed (locally constant) if and only if the Hamiltonian vector field $X_H$ is tangent to $\cL$.
\end{theorem}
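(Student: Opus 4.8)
The plan is to reduce the statement to a pointwise identity of linear algebra on each tangent space $\sT_p\cL$, $p\in\cL$, the only structural input being the defining property of a Lagrangian submanifold: that $\sT_p\cL$ coincides with its own $\zw$-orthogonal complement $(\sT_p\cL)^{\perp_\zw}=\{\,w\in\sT_pP\;:\;\zw(w,v)=0\ \text{for all}\ v\in\sT_p\cL\,\}$. Since $X_H$ is an already globally defined vector field on $P$, tangency to $\cL$ is simply the condition $X_H(p)\in\sT_p\cL$ to be checked at each $p\in\cL$, so no flows or integrability considerations enter.

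First I would record the elementary computation behind everything: for any $p\in P$ and any $v\in\sT_pP$ one has $\zw\big(X_H(p),v\big)=\big(i_{X_H}\zw\big)(v)=\langle v,\xd H\rangle=v(H)$, directly from the defining equation $i_{X_H}\zw=\xd H$ of the Hamiltonian vector field. In particular, when $v$ is restricted to lie in $\sT_p\cL$, the number $\zw(X_H(p),v)$ is nothing but the derivative of $H$ along the submanifold $\cL$, i.e. the pairing of $v$ with $\xd\big(H\,\big|_\cL\big)(p)$.

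Then I would argue the two implications using, respectively, the two halves of the Lagrangian condition. If $X_H$ is tangent to $\cL$, then $X_H(p)\in\sT_p\cL$ for every $p\in\cL$; isotropy of $\cL$ gives $\zw(X_H(p),v)=0$ for all $v\in\sT_p\cL$, hence $v(H)=0$ for all such $v$, which says exactly that $\xd\big(H\,\big|_\cL\big)(p)=0$; as this holds at every $p\in\cL$, $H\,\big|_\cL$ is closed, i.e. locally constant (the parenthetical just records that $\cL$ need not be connected). Conversely, if $H\,\big|_\cL$ is locally constant then $v(H)=0$ for every $v\in\sT_p\cL$ and every $p\in\cL$, so by the computation above $\zw(X_H(p),v)=0$ for all $v\in\sT_p\cL$, i.e. $X_H(p)\in(\sT_p\cL)^{\perp_\zw}$; but $\cL$ being Lagrangian, this orthogonal complement is again $\sT_p\cL$, so $X_H(p)\in\sT_p\cL$ and $X_H$ is tangent to $\cL$.

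I do not expect a genuine obstacle: the proof is a one-line dualisation of $i_{X_H}\zw=\xd H$ through the self-orthogonality of Lagrangian subspaces. The only point deserving a moment's care is to use both inclusions of the Lagrangian condition — isotropy $\sT_p\cL\subseteq(\sT_p\cL)^{\perp_\zw}$ for the forward direction and maximality $(\sT_p\cL)^{\perp_\zw}\subseteq\sT_p\cL$ for the converse — rather than just one of them.
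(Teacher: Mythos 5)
Your proof is correct and is essentially the paper's own argument: both rest on the identity $\zw_x(X_H(x),Y)=\la\xd H(x),Y\ran$ for $Y\in\sT_x\cL$ together with the self-orthogonality $(\sT_x\cL)^{\perp_\zw}=\sT_x\cL$ of a Lagrangian subspace. You merely spell out the two inclusions of the Lagrangian condition more explicitly than the paper does.
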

\begin{proof}
For any $x\in\cL$ and any $Y\in\sT_x\cL$ we have
$$\la\xd H(x),Y\ran=\zw_x(X_H(x),Y)\,,$$
so that $\xd H(x)$ acts trivially on $\sT_x\cL$ if and only if $X_H(x)$ is tangent to the Lagrangian submanifold $\cL$.

\end{proof}
\noindent In standard applications, $P=\sT^*Q$ and the Lagrangian submanifold is taken to be $\cL(S)=\xd S(Q)$ for a function $S:Q\to\R$. Then  the above observation means that $\xd(H\circ\xd S)=0$ is equivalent to the fact that the trajectories of $X_H$ starting from points of $\xd S(Q)$ lie entirely in $\xd S(Q)$, thus project onto trajectories of the projected vector field $(\zp_Q)_*\left(X_H\,\big|_{\xd S(Q)}\right)$, where $\zp_Q:\sT^*Q\to Q$ is the canonical projection (note that $\zp_Q\,\big|_{\cL(S)}:\xd S(Q)\to Q$
is a diffeomorphism). We call the equation $\xd(H\circ\xd S)=0$, i.e.,
$$H(q^i,\frac{\pa S}{\pa q^j})=const\,,$$
the \emph{Hamilton-Jacobi equation}.

\medskip\noindent
The time-dependent (nonautonomous) version of the Hamilton-Jacobi theory is essentially based on the same principle. Like in the previous section, we associate with a time-dependent Hamiltonian $H:P\ti\R\to\R$ on the symplectic manifold $(P,\zw)$ the autonomous Hamiltonian
$$\bar H(x,t,p)=H(x,t)+p$$
on the symplectic manifold $\bar P=P\ti\sT^*\R$ (with the symplectic 2-form $\bar\zw=\zw+\xd t\we\xd p$), where $(t,p)$ are the standard Darboux coordinates on $\sT^*\R$, and we apply Theorem \ref{HJ} to $\bar H$ and a Lagrangian submanifold $\bar\cL\subset\bar P$.
If $\bar H$ is locally constant on $\bar\cL$, say $\bar H(x,t,p)=H(x,t)+p=c$, then replacing $p$ with $p+c$ (note that $(t,p+c)$ are also global Darboux coordinates on $\sT^*\R$), we can always assume that $\bar H$ is zero on $\bar\cL$; this change of coordinates does not change the Hamiltonian vector field. The condition $\bar H\,\big|_{\bar\cL}=0$ we call the (nonautonomous) \emph{Hamilton-Jacobi equation}. As we already know, it is equivalent to the fact that  the Hamiltonian vector field,
$$X_{\bar H}(x,t,p)=X_{H_t}(x)-\frac{\pa H}{\pa t}(x,t)\pa_p+\pa_t\,,$$
is tangent to $\bar\cL$. Moreover, as on $\bar\cL$ the momentum $p$ is a function of $(x,t)$, the Lagrangian submanifold projects diffeomorphically with respect to the canonical projection $\bar\zp:P\ti\sT^*\R\to P\ti\R$ onto a submanifold $\cL$ of $P\ti\R$ and the Hamiltonian vector field $X_{\bar H}$ projects onto the vector field $X_H(x,t)=X_{H_t}(x)+\pa_t$ which is the autonomization of the time-dependent vector field $X_{H_t}$ on $P$. Hence, the trajectory of $X_H$ with the initial point $(x_0,t_0)$ projects \emph{via} the canonical projection $P\ti\R\to\R$ ono the trajectory $\zg:(a,b)\to P$ of the time-dependent vector field $X_{H_t}$ with the initial condition $\zg(t_0)=x_0$.

\medskip\noindent In the classical situation, when $P=\sT^*Q$ with the canonical symplectic form $\zw_Q=\xd q^i\we\xd p_i$ and $\bar\cL(S)=\xd S(Q\ti\R)$ for a function $S:Q\ti\R\to\R$, we have the corresponding \emph{nonautonomous Hamilton-Jacobi equation} in the form $\bar H\circ\xd S=0$, i.e.,
$$ H\left(q^i,\frac{\pa S}{\pa q^j},t\right)+\frac{\pa S}{\pa t}=0\,.$$
In this case the Hamiltonian vector field $X_{\bar H}$ is tangent to $\bar\cL(S)$, so that the trajectories of $X_{\bar H}$ starting from points of $\bar\cL(S)$ project \emph{via}
the canonical projection $\zp_{Q\ti\R}:\sT^*(Q\ti\R)\to Q\ti\R$ onto trajectories of the projected vector field
$$\ul{X}_{\bar H}=(\zp_{Q\ti\R})_*\left(X_{\bar H}\,\big|_{\bar\cL(S)}\right)$$
on $Q\ti\R$. As
$$\ul{X}_{\bar H}(q,t)=(\zp_Q)_*\left(X_{H_t}(q,\xd S_t(q)\right)+\pa_t$$
is the autonomization of the time-dependent  vector field
$$\ul{X}_{H_t}(q)=(\zp_Q)_*\left(X_{H_t}(q,\xd S_t(q))\right)$$
on $Q$, we get the reduction of the Hamilton equations on $\bar\cL(S)=\xd S(Q\ti\R)$ onto a system of time-dependent first-order ordinary differential equations on $Q$, represented by the time-dependent vector field $\ul{X}_{H_t}$. This gives the following classical \emph{Jacobi Theorem}.
\begin{theorem}
Let $H:\sT^*Q\ti\R\to\R$ be a nonautonomous Hamiltonian on a symplectic manifold $P=\sT^*Q$ and
let $X_{\bar H}$ be the Hamiltonian vector field on $\sT^*(Q\ti\R)$ associated with the autonomization
$$\bar H:\bar P=\sT^*(Q\ti\R)=P\ti\sT^*\R\to\R\,,\quad \bar H(x,t,p)=H(x,t)+p$$
of $H$. Take a function $S:Q\ti\R\to\R$, so that $\xd S:Q\ti\R\to\sT^*(Q\ti\R)$ maps $Q\ti\R$ onto the Lagrangian submanifold $\bar\cL(S)=(\xd S)(Q\ti\R)$ of $\bar P=\sT^*(Q\ti\R)$. Then  the canonical projection $\zp_{Q\ti\R}:\sT^*(Q\ti\R)\to Q\ti\R$ maps diffeomorphically the Lagrangian submanifold $\bar\cL(S)$ onto $Q\ti\R$, so the vector field $X_{\bar H}\,\big|_{\bar\cL(S)}$ onto a vector field $\ul{X}_{\bar H}$ on $Q\ti\R$, and the followings are equivalent:
\begin{enumerate}
\item the nonautonomous Hamilton-Jacobi equation
$$H\left(q,\frac{\pa S}{\pa q},t\right)+\frac{\pa S}{\pa t}=0$$
is satisfied;
\item the Hamiltonian vector field $X_{\bar H}$ is tangent to $\bar\cL(S)$;
\item the vector fields $\ul{X}_{\bar H}$ and $X_{\bar H}$ are $\xd S$-related;
\item the section $\xd S:Q\ti\R\to\sT^*(Q\ti\R)$ maps trajectories of the vector field $\ul{X}_{\bar H}$ onto trajectories of the Hamiltonian vector field $X_{\bar H}$.
\end{enumerate}
\end{theorem}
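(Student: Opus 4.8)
The plan is to obtain the theorem as a direct consequence of Theorem~\ref{HJ}, applied to the symplectic manifold $\bar P=\sT^*(Q\ti\R)$ with its canonical symplectic form, the Hamiltonian $\bar H$, and the Lagrangian submanifold $\bar\cL(S)=(\xd S)(Q\ti\R)$. First I would record the standing facts that set this up: $\bar\cL(S)$ is Lagrangian because it is the image of the closed (indeed exact) $1$-form $\xd S$ on $Q\ti\R$; the section $\xd S\colon Q\ti\R\to\sT^*(Q\ti\R)$ is an embedding with image $\bar\cL(S)$ satisfying $\zp_{Q\ti\R}\circ\xd S=\id_{Q\ti\R}$, so $\zp_{Q\ti\R}$ restricts to a diffeomorphism $\bar\cL(S)\to Q\ti\R$ with inverse $\xd S$; and $\ul{X}_{\bar H}:=(\zp_{Q\ti\R})_*\!\left(X_{\bar H}\,\big|_{\bar\cL(S)}\right)$ is therefore a well-defined smooth vector field on $Q\ti\R$, with the explicit coordinate form displayed just before the theorem.

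For the equivalence of item (1) with item (2) I would pull $\bar H$ back along $\xd S$: since $\bar H(x,t,p)=H(x,t)+p$, and $\xd S(q,t)$ carries the $Q$-momenta $\pa S/\pa q$ and the $\R$-momentum $\pa S/\pa t$, one gets $(\bar H\circ\xd S)(q,t)=H\!\left(q,\frac{\pa S}{\pa q},t\right)+\frac{\pa S}{\pa t}$. By Theorem~\ref{HJ}, $X_{\bar H}$ is tangent to $\bar\cL(S)$ if and only if $\bar H\,\big|_{\bar\cL(S)}$ is locally constant, that is, if and only if this pulled-back function is locally constant. As already explained before Theorem~\ref{HJ}, the substitution $p\mapsto p+c$ (equivalently $S\mapsto S-ct$) is a Darboux change that leaves $X_{\bar H}$ unchanged and normalizes the constant to $0$; hence ``locally constant'' may be replaced by ``$=0$'', which is precisely the nonautonomous Hamilton--Jacobi equation in (1).

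For the equivalence (2)~$\Leftrightarrow$~(3): since $\xd S$ is an embedding onto $\bar\cL(S)$, the vector $X_{\bar H}(\xd S(q,t))$ is tangent to $\bar\cL(S)$ precisely when it lies in $(\xd S)_*\bigl(\sT_{(q,t)}(Q\ti\R)\bigr)$, that is, equals $(\xd S)_*v$ for a then unique $v\in\sT_{(q,t)}(Q\ti\R)$; applying $(\zp_{Q\ti\R})_*$ and using $\zp_{Q\ti\R}\circ\xd S=\id$ forces $v=\ul{X}_{\bar H}(q,t)$. Thus $X_{\bar H}$ being tangent to $\bar\cL(S)$ is the same as the identity $(\xd S)_*\ul{X}_{\bar H}=X_{\bar H}\circ\xd S$, i.e.\ as $\ul{X}_{\bar H}$ and $X_{\bar H}$ being $\xd S$-related. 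Finally (3)~$\Leftrightarrow$~(4) is the standard correspondence between related vector fields and transport of integral curves: if the two fields are $\xd S$-related, the chain rule shows that $\xd S$ carries each trajectory of $\ul{X}_{\bar H}$ to a trajectory of $X_{\bar H}$; conversely, since through every point of $Q\ti\R$ there runs a trajectory $\zg$ of $\ul{X}_{\bar H}$, differentiating $\xd S\circ\zg$ at that point recovers $(\xd S)_*\ul{X}_{\bar H}=X_{\bar H}\circ\xd S$.

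I expect no genuine obstacle in this argument: the theorem is essentially a repackaging of Theorem~\ref{HJ} together with two elementary lemmas --- tangency to the graph of a section versus $\xd S$-relatedness, and $\xd S$-relatedness versus transport of trajectories. The one point that needs a word of care is the passage from ``locally constant'' (the conclusion of Theorem~\ref{HJ}) to the sharp form ``$=0$'' of item (1), handled by the translation $p\mapsto p+c$ discussed above. The explicit coordinate expression for $\ul{X}_{\bar H}$ recorded before the theorem plays no role in the equivalences themselves; it serves only to interpret the statement as the reduction of Hamilton's equations on $\bar\cL(S)$ to a system of time-dependent first-order ODEs on $Q$.
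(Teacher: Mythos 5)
Your proposal is correct and follows essentially the same route as the paper, which derives the theorem from Theorem~\ref{HJ} applied to $\bar H$ and $\bar\cL(S)$ on $\bar P=\sT^*(Q\ti\R)$, together with the $p\mapsto p+c$ normalization and the standard facts about sections, $\xd S$-relatedness, and transport of trajectories. The only difference is that you spell out the elementary equivalences $(2)\Leftrightarrow(3)\Leftrightarrow(4)$ that the paper leaves implicit in the discussion preceding the statement.
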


\subsection{Contact Hamilton-Jacobi equations}
Let now $M$ be a contact manifold with the contact structure $(P,\zt,M,\zw,h)$, where $\zt:P\to M$
is an $\R^\ti$-principal bundle with the $\R^\ti$-action $h:\R^\ti\ti P\to P$ and $\zw$ is a 1-homogeneous symplectic form on $P$, $h_t^*\zw=t\cdot\zw$. Let us choose a Legendre submanifold $\cL_0\subset M$, so that $\cL=\zt^{-1}(\cL_0)$ is an $\R^\ti$-invariant Lagrangian submanifold of $P$. According to Theorem \ref{HJ}, a Hamiltonian $H:P\to\R$ is locally constant on $\cL$ and  1-homogeneous if and only if its Hamiltonian vector field $X_H$ is tangent to $\cL$. Since $\cL$ is a union of fibers of $\zt:P\to M$ over $\cL_0$, any locally constant 1-homogeneous Hamiltonian on $\cL$ must be 0 on $\cL$. The vector field $X_H$ is projectable and tangent to $\cL$, so that $\zt_*(X_H)=X^c_H$ is tangent to $\cL_0$. Conversely, if $X^c_H$ is tangent to $\cL_0$, then clearly the projectable vector field $X_H$ must be tangent to $\cL=\zt^{-1}(\cL_0)$, thus $H\,\big|_\cL=0$ and we get a contact version of Theorem \ref{HJ}, the \emph{contact Jacobi theorem}.
\begin{theorem}\label{cHJ}
Let $\cL_0$ be a Legendre submanifold of a contact manifold $M$ with the contact structure $(P,\zt,M,\zw,h)$. A 1-homogeneous Hamiltonian $H$ on $P$ equals 0 on the submanifold $\cL=\zt^{-1}(\cL_0)$ if and only if the Hamiltonian vector field $X^c_H$ is tangent to $\cL_0$.
\end{theorem}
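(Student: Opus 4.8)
The plan is to obtain Theorem~\ref{cHJ} as an immediate consequence of the symplectic Hamilton--Jacobi statement, Theorem~\ref{HJ}, applied on the symplectic cover $(P,\zw)$. The Lagrangian submanifold to feed into Theorem~\ref{HJ} is $\cL=\zt^{-1}(\cL_0)$: by the Proposition identifying Legendre submanifolds of $M$ with $\Rt$-invariant Lagrangian submanifolds of $P$, the set $\cL$ is indeed Lagrangian in $(P,\zw)$, and by construction it is \emph{saturated}, i.e.\ a union of entire fibres of $\zt$. Moreover, since $H$ is $1$-homogeneous, the projectability theorem proved above gives that $X_H$ is $\zt$-projectable with $\zt_*(X_H)=X^c_H$ well defined on $M$.

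With these ingredients in place, I would argue the equivalence in two chained steps. First, Theorem~\ref{HJ} says that $X_H$ is tangent to $\cL$ precisely when $H|_\cL$ is locally constant. For a $1$-homogeneous $H$ on the saturated $\cL$, local constancy is the same as vanishing: in a local trivialisation $P\simeq\Rt\ti M$ one has $H(s,x)=s\cdot\ul H(x)$, so the restriction of $H$ to a fibre $\{(s,x_0):s\in\Rt\}\subset\cL$ is $s\mapsto s\,\ul H(x_0)$, which is locally constant along the fibre only if $\ul H(x_0)=0$; hence $H|_\cL$ locally constant $\Longleftrightarrow H|_\cL=0$. Second, because $\zt$ is a surjective submersion and $\cL=\zt^{-1}(\cL_0)$, one has $\sT_p\cL=(\xd\zt_p)^{-1}\bigl(\sT_{\zt(p)}\cL_0\bigr)$ for every $p\in\cL$ (in particular $\ker(\xd\zt_p)\subset\sT_p\cL$), so for the projectable field $X_H$ one gets $X_H(p)\in\sT_p\cL\Longleftrightarrow X^c_H(\zt(p))\in\sT_{\zt(p)}\cL_0$; letting $p$ range over $\cL$ (equivalently $\zt(p)$ over $\cL_0$) shows that $X_H$ is tangent to $\cL$ iff $X^c_H$ is tangent to $\cL_0$. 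Concatenating the two steps yields $H|_\cL=0\Longleftrightarrow X^c_H$ tangent to $\cL_0$.

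As the argument shows, this statement is really a corollary of Theorem~\ref{HJ} together with the Legendre/Lagrangian correspondence, so there is no genuine obstacle; the only point deserving a word of care is the homogeneity observation that upgrades ``locally constant on $\cL$'' to ``identically zero on $\cL$'', which is exactly what makes the contact Hamilton--Jacobi condition a clean PDE ($\zs\circ\sj^1(S)=0$ in the jet-bundle model) rather than a ``constant-on-the-right-hand-side'' equation as in the classical case. Everything else is routine bookkeeping with projectable vector fields and saturated submanifolds, already prepared by the Proposition and the projectability theorem.
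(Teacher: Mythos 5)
Your proposal is correct and follows essentially the same route as the paper: apply Theorem~\ref{HJ} on the symplectic cover to the saturated Lagrangian $\cL=\zt^{-1}(\cL_0)$, use $1$-homogeneity along the fibres to upgrade ``locally constant'' to ``identically zero'', and use projectability of $X_H$ together with saturation of $\cL$ to translate tangency to $\cL$ into tangency of $X^c_H$ to $\cL_0$. The only difference is that you spell out the fibrewise computation $s\mapsto s\,\ul H(x_0)$ and the identity $\sT_p\cL=(\xd\zt_p)^{-1}(\sT_{\zt(p)}\cL_0)$, which the paper leaves implicit.
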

\noindent Since 1-homogeneous Hamiltonians are in a one-to-one correspondence with sections $\zs$ of the line bundle $\zt^*_P:L^*_P\to M$, we can write $H=H_\zs=\zi_\zs$ for some section $\zs$. The vanishing of $H_\zs$ on $\zt^{-1}(\cL_0)$ is equivalent to the fact that $\zs$ vanishes on $\cL_0\subset M$.
In the trivial case there is a one-to-one correspondence between 1-homogeneous (contact) Hamiltonians $H:P\to\R$ and functions $\hat H:M\to\R$ which we call reduced contact Hamiltonians. In local coordinates,
$$H(s,y)=s\cdot \hat H(y)\,.$$
The Hamiltonian vector field $X_H$ on $P$ projects \emph{via} $\zt:P\to M$ onto the contact Hamiltonian vector field $X^c_{\hat H}$ on $M$.

\medskip As an example,
consider the canonical trivial contact manifold $M=\sJ^1(Q;\R)=\R\ti\sT^*Q$ and  let  $X^c_{\hat H}$ be the contact Hamiltonian vector field on $M$ associated with a reduced contact Hamiltonian $\hat H:\R\ti\sT^*Q\to\R$ and described in (\ref{con-dyn}). Consider a function $S:Q\to\R$ and the Legendre submanifold $\cL_0(S)$ in $\R\ti\sT^*Q$ being the image of  $\sj^1(S):Q\to\R\ti\sT^*Q$, i.e., $\cL_0(S)=\sj^1(S)(Q)$. In local coordinates,
$$\cL_0(S)=\big\{(S(q),\xd S(q))\,\big|\,q\in Q\big\}=\bigg\{\left(S(q),q^i,\frac{\pa S}{\pa q^j}(q)\right)\,\big|\,q\in Q\bigg\}\,.$$
The canonical projection $\zp^1_Q:\sJ^1(Q,\R)=\R\ti\sT^*Q\to Q$ maps the Legendre submanifold $\cL_0(S)$ diffeomorphically onto $Q$, so the contact Hamiltonian vector field along $\cL_0(S)$, $X^c_{\hat H}\,\big|_{\cL_0(S)}$, onto a vector field $\ul{X}^c_{\hat H}$ on $Q$.
Theorem \ref{cHJ} implies now immediately the following version of \emph{contact Jacobi Theorem}.
\begin{theorem}
The vector fields $\ul{X}^c_{\hat H}$ and $X^c_{\hat H}$ are $\sj^1(S)$-related if and only if
$$\hat H\circ\sj^1(S)(q)=\hat H\left(S(q),q^i,\frac{\pa S}{\pa q^j}(q)\right)=0$$
for all $q\in Q$.
\end{theorem}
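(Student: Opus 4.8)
The plan is to obtain this statement as a direct corollary of the contact Jacobi Theorem (Theorem~\ref{cHJ}), combined with the elementary observation that, along a section, relatedness of vector fields is the same thing as tangency to the image of the section.

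First I would fix the identifications in the trivial case. Here $M=\sJ^1(Q;\R)=\R\ti\sT^*Q$ is the trivial contact manifold, so in a (global, in this case) trivialization $P=\Rt\ti M$ the $1$-homogeneous Hamiltonian attached to the reduced contact Hamiltonian $\hat H$ is $H(s,y)=s\cdot\hat H(y)$, and $X^c_{\hat H}=\zt_*(X_H)$. By the correspondence between Legendre submanifolds of $M$ and $\Rt$-invariant Lagrangian submanifolds of $(P,\zw)$ (equivalently, by the computation in Example~\ref{jb} that $\zt^{-1}(\sj^1(S)(Q))=(\xd\,\zi_S)(L^\ti)$), the preimage $\cL=\zt^{-1}(\cL_0(S))$ is an $\Rt$-invariant Lagrangian submanifold of $(P,\zw)$. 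Since in the trivialization $\cL=\Rt\ti\cL_0(S)$ and $s$ never vanishes on $P$, the function $H=s\cdot\hat H$ is identically $0$ on $\cL$ if and only if $\hat H$ is identically $0$ on $\cL_0(S)$, i.e.\ if and only if $\hat H\circ\sj^1(S)=0$ on $Q$. This is the only step where the triviality of the contact structure is used.

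Next, Theorem~\ref{cHJ} applied to the $1$-homogeneous Hamiltonian $H=\zi_\zs$ gives that $H$ vanishes on $\cL=\zt^{-1}(\cL_0(S))$ if and only if $X^c_{\hat H}$ is tangent to $\cL_0(S)$. It then remains to match ``$X^c_{\hat H}$ tangent to $\cL_0(S)$'' with ``$\ul{X}^c_{\hat H}$ and $X^c_{\hat H}$ are $\sj^1(S)$-related''. For this I would use that $\sj^1(S):Q\to M$ is an embedding with image $\cL_0(S)$ and that $\zp^1_Q$ restricts to a diffeomorphism $\cL_0(S)\to Q$ inverse to $\sj^1(S)$. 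By construction $\ul{X}^c_{\hat H}(q)=\sT\zp^1_Q\big(X^c_{\hat H}(\sj^1(S)(q))\big)$, and since $\sT\sj^1(S)\circ\sT\zp^1_Q$ restricts to the identity on $\sT\cL_0(S)$, the equality $\sT\sj^1(S)\big(\ul{X}^c_{\hat H}(q)\big)=X^c_{\hat H}(\sj^1(S)(q))$ holds for every $q\in Q$ precisely when $X^c_{\hat H}(\sj^1(S)(q))\in\sT_{\sj^1(S)(q)}\cL_0(S)$, that is, precisely when $X^c_{\hat H}$ is tangent to $\cL_0(S)$. Chaining the two equivalences yields the theorem.

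There is no real obstacle here; the substance lies entirely in Theorem~\ref{cHJ}, and the rest is bookkeeping. The mildest subtlety is the first step, namely recognising that the \emph{geometric} vanishing condition ``$H=0$ on the Lagrangian cover $\cL$'' coincides with the \emph{coordinate} condition ``$\hat H=0$ on $\cL_0(S)$'', which is immediate since $s\neq 0$ everywhere on $P$. As a self-contained alternative one could substitute the explicit expression (\ref{con-dyn}) for $X^c_{\hat H}$ into the parametrization $q\mapsto\big(S(q),q^i,\tfrac{\pa S}{\pa q^j}(q)\big)$ of $\cL_0(S)$ and verify the tangency conditions directly, recovering $\hat H\circ\sj^1(S)=0$; I would mention this route but not carry out the computation, the conceptual argument via Theorem~\ref{cHJ} being both shorter and coordinate-free.
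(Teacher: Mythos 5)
Your proposal is correct and follows essentially the same route as the paper, which likewise presents this statement as an immediate consequence of Theorem \ref{cHJ} together with the fact that $\zp^1_Q$ maps $\cL_0(S)$ diffeomorphically onto $Q$, so that tangency of $X^c_{\hat H}$ to $\cL_0(S)$ is the same as $\sj^1(S)$-relatedness of $\ul{X}^c_{\hat H}$ and $X^c_{\hat H}$. The details you supply --- that $H=s\cdot\hat H$ vanishes on $\zt^{-1}(\cL_0(S))=\Rt\ti\cL_0(S)$ if and only if $\hat H\circ\sj^1(S)=0$ because $s\neq 0$ everywhere on $P$, and that $\sT\sj^1(S)\circ\sT\zp^1_Q$ restricts to the identity on $\sT\cL_0(S)$ --- are exactly the bookkeeping the paper leaves implicit.
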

\noindent The PDE
$$\hat H\circ\sj^1(S)=0$$
we call the \emph{contact Hamilton-Jacobi equation}.
\begin{remark}
The above contact Hamilton-Jacobi equations for extended cotangent bundle reduce to the ones obtained in \cite{deLeon:2021c,Esen:2021a}. Below we extend them to the case of first jet bundles $\sJ^1(L)$ for arbitrary line bundles.
\end{remark}

\medskip\noindent In the above example the contact manifold was the bundle of first jets of sections of the trivial line bundle $\R\ti Q\to Q$ with its canonical contact 1-form. We can easily generalize all this to the case of an arbitrary line bundle.
As we already know, 1-homogeneous contact Hamiltonians $H:P\to\R$ on the symplectic $\Rt$-bundle $P=\sT^*(\Lt)$  are in a one-to-one correspondence with sections $\zs:\sJ^1(L^*)\to L^*_P$ of the line bundle $L^*_P=\sJ^1(L^*)\ti_QL^*$; the Hamiltonian $\zi_\zs$ associated with $\zs$ we denote also $H_\zs$. Locally everything looks like in the case of $J^1(Q,\R)$, so the contact Jacobi theorem takes the following form.
\begin{theorem}\label{cHJ1}
Let $\zt_0:L\to Q$ be a line bundle, $\zt^1(L^*):\sJ^1(L^*)\to Q$ be the bundle of first jets of sections of the dual bundle $\zt^*_0:L^*\to Q$ equipped with the canonical contact structure $$\left(P=\sT^*(L^\ti),\zt,\sT^*h,\zw_{L^\ti}\right)\,,$$
where $h$ is the canonical action of $\R^\ti$ on the $\Rt$-principal bundle $L^\ti$. If $$\zs:\sJ^1(L^*)\to L^*_P=\sJ^1(L^*)\ti_QL^*$$
is a section of the line bundle $L^*_P$, $X^c_\zs$ is the contact Hamiltonian vector field on $\sJ^1(L^*)$ associated with the contact Hamiltonian $H_\zs$, and $S:Q\to L^*$ is a section of the line bundle $L^*\to Q$, then the following are equivalent:
\begin{enumerate}
\item we have \be\label{cHJe} \zs\circ\sj^1(S)=0\,;\ee
\item the contact Hamiltonian vector field  $X^c_\zs$ restricted to the Legendre submanifold $\cL_0(S)=\sj^1(S)(Q)$ in $\sJ^1(L^*)$ and its projection $\ul{X}^c_\zs=({\zt^1(L^*)})_*(X^c_\zs\,\big|_{\cL_0})$ onto $Q$ are
$\zt^1(L^*)$-related.
\end{enumerate}
\end{theorem}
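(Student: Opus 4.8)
The plan is to obtain the equivalence by specialising the abstract contact Jacobi theorem, Theorem~\ref{cHJ}, to the symplectic $\Rt$-principal bundle $P=\sT^*(L^\ti)$, the Legendre submanifold $\cL_0=\cL_0(S)=\sj^1(S)(Q)$ of $M=\sJ^1(L^*)$, and the $1$-homogeneous Hamiltonian $H=H_\zs=\zi_\zs$. From Example~\ref{jb} we already know that $\cL_0(S)$ is a Legendre submanifold of $\sJ^1(L^*)$, that its $\zt$-preimage
$$\cL(S):=\zt^{-1}\bigl(\cL_0(S)\bigr)=(\xd\,\zi_S)(L^\ti)$$
is an $\Rt$-invariant Lagrangian submanifold of $\sT^*(L^\ti)$, and that the Hamiltonian vector field $X_\zs$ of $H_\zs$ projects onto $X^c_\zs$. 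Theorem~\ref{cHJ} then asserts that $H_\zs$ vanishes on $\cL(S)$ if and only if $X^c_\zs$ is tangent to $\cL_0(S)$, so it only remains to match the two ends of this equivalence with conditions~(1) and~(2) of the statement.

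For the left-hand end I would use the double-bundle picture of Example~\ref{jb}: under the identifications $P=\sT^*(L^\ti)=\sJ^1(L^*)\ti_Q L^\ti$ and $L^*_P=\sJ^1(L^*)\ti_Q L^*$ from \eqref{fjs}, with $\zt\colon P\to\sJ^1(L^*)$ the projection onto the first factor, the $1$-homogeneous function $\zi_\zs$ at a point $(j,w_q)\in\sJ^1(L^*)\ti_Q L^\ti$ equals the canonical pairing $\langle w_q,\zs(j)\rangle$ of $w_q\in L^\ti_q\subset L_q$ with $\zs(j)\in L^*_q$. Since $\cL(S)$ is the union of the full $\zt$-fibres $\{j\}\ti L^\ti_q$ over the points $j\in\cL_0(S)$, and these fibres linearly span the corresponding fibres of $L$, the function $\zi_\zs$ vanishes on $\cL(S)$ precisely when $\zs(j)=0$ for every $j\in\cL_0(S)=\sj^1(S)(Q)$, that is, precisely when $\zs\circ\sj^1(S)=0$. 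This is the step I expect to demand the most care, since it rests on keeping the several canonical identifications of Example~\ref{jb} mutually consistent; a safe alternative is to work locally with a trivialising atlas supplied by Theorem~\ref{atlas}, apply on each chart the trivial-case contact Jacobi theorem proved just above for $\sJ^1(Q;\R)$, and observe that conditions~(1) and~(2) are manifestly independent of the local trivialisation and hence patch.

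For the right-hand end I would use that $\cL_0(S)=\sj^1(S)(Q)$ is the image of a section of $\zt^1(L^*)\colon\sJ^1(L^*)\to Q$, so $\zt^1(L^*)$ restricts to a diffeomorphism of $\cL_0(S)$ onto $Q$ with inverse $\sj^1(S)$. If $X^c_\zs$ is tangent to $\cL_0(S)$, this diffeomorphism carries $X^c_\zs\big|_{\cL_0(S)}$ to the vector field $\ul{X}^c_\zs$ on $Q$, and $\sj^1(S)$ being its inverse, $\ul{X}^c_\zs$ and $X^c_\zs$ are $\zt^1(L^*)$-related; conversely, $\zt^1(L^*)$-relatedness forces the values of $X^c_\zs$ along $\cL_0(S)$ into the image of $\sj^1(S)_*$, hence tangent to $\cL_0(S)$. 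Combining these two matchings with Theorem~\ref{cHJ} yields the asserted equivalence of~(1) and~(2); in a local trivialisation the equation $\zs\circ\sj^1(S)=0$ takes exactly the familiar coordinate form of the contact Hamilton-Jacobi equation.
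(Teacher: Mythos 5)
Your proposal is correct and follows essentially the same route as the paper, which simply specialises the abstract contact Jacobi theorem (Theorem~\ref{cHJ}) to $P=\sT^*(L^\ti)$ and remarks that locally everything reduces to the $\sJ^1(Q,\R)$ case; you merely supply the details the paper leaves implicit, in particular the verification via the pairing that vanishing of $\zi_\zs$ on the $\Rt$-fibres over $\cL_0(S)$ is equivalent to $\zs\circ\sj^1(S)=0$.
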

\noindent The equation (\ref{cHJe}) we call the \emph{contact Hamilton-Jacobi equation}. Note that the section $\zs$ can be identified with a morphism $\tilde\zs:\sJ^1(L^*)\to L^*$ of fiber bundles over $Q$ defined by $\tilde\zs=pr_2\circ\zs$, where
$$pr_2:\sJ^1(L^*)\ti_QL^*\to L^*$$
is the projection onto the second factor.
\subsection{Contact Hamilton-Jacobi equations -- the nonautonomous case}
In Section 5.1 we described the autonomization of a contact structure and the corresponding contact Hamiltonian dynamics. Let $(P,\zt,M,\zw,h)$ be a contact structure and
$(\bar P,\bar\zt,\bar M,\bar\zw,\bar h)$, where $\bar P=P\ti\sT^*\R$, be its autonomization.

Let us consider now a time-dependent section $\zs:M\ti\R\to L^*_P$ and the corresponding time-dependent 1-homogeneous contact Hamiltonian $H_\zs:P\ti\R\to\R$, so that every Hamiltonian $H_{\zs_t}:P\to\R$ is 1-homogeneous. Note that we can identify $\zs$ with a section ${\tilde\zs}$ of the line bundle
$$\bar\zt^*_P=(\zt^*_P,\id_\R):\bar L^*_P=L^*_P\ti\R\to M\ti\R\,,\quad {\tilde\zs}(y,t)=(\zs(y,t),t)\,.$$
The autonomization of $H_\zs$ is the contact Hamiltonian $\bar H_\zs:\bar P\to\R$ of the form $\bar H_\zs(x,t,p)=H_\zs(x,t)+p$ and corresponds to the autonomization $\bar\zs$ of the time-dependent section $\zs$,
$$\bar\zs:\bar L^*_P=L^*_P\ti\R\to \sV(L^*_{P})=(L^*_P\ti_M L^*_P)\,,\quad \bar\zs(v_y,t)=(v_y,v_y+\zs(y,t))\,,$$
i.e., $\bar H_\zs=H_{\bar\zs}$. The corresponding Hamiltonian vector field $X_{\bar\zs}$ on $\bar P$  reads
$$X_{\bar\zs}(x,t,p)=X_{{\zs_t}}(x)-\frac{\pa H_\zs}{\pa t}(x,t)\pa_{p}+\pa_t$$ and
projects onto the contact Hamiltonian vector field $X^c_{\bar\zs}$ on $\bar L^*_P=L^*_P\ti\R$.
If $P=\R^\ti\ti M$ is a local trivialization of the $\R^\ti$-principal bundle $\zt:P\to M$ with coordinates $(s,y)$ and the corresponding contact 1-form $\zh=\zh(y)$ on $M$, then we have the corresponding local trivialization $\bar P=\R^\ti\ti M\ti\sT^*\R$ of $\bar P$ (with coordinates $(s,y,t,p)$) and the corresponding local trivialization $\bar L^*_P=\R^*\ti M\ti\R$ of $\bar L^*_P$ (with coordinates $(\p,y,t)$), so that $\bar\zt:\bar P\to\bar L^*_P$ reads
$\bar\zt(s,y,t,p)=(p/s,y,t)$, the corresponding contact 1-form $\bar\zh$ on $\bar L^*_P$ reads $\bar\zh(\p,y,t)=\zh(y)-\p\,\xd t$, and the section $\bar\zs:\bar L^*_P\to \sV(L^*_{P})$ reads $\bar\zs(\p,y,t)=\zs(y,t)+\p$. The contact Hamiltonian vector field on $\bar L^*_P$ is
$$X^c_{\bar\zs}(\p,y,t)=X^c_{\zs_t}-\frac{\pa\zs}{\pa t}(y,t)\pa_{\p}+\pa_t\,.$$
Now, we can use the autonomous contact Hamilton-Jacobi theory which implies that, for any Legendre submanifold of $\cL_0$ of $\bar L^*_P$, the section $\bar\zs$ vanishes on $\cL_0$ if and only if $X^c_{\bar\zs}$ is tangent to $\cL_0$. In such a case $\p=-\zs(y,t)$
on $\cL_0$ and thus the canonical projection $\bar\zt^*_P:\bar L^*_P\to M\ti\R$ maps diffeomorphically $\cL_0$ onto a submanifold $\ul{\cL}_0$ of $M\ti\R$ and hence relates the contact Hamiltonian vector field $X^c_{\bar\zs}\,\big|_{\cL_0}$ along $\cL_0$ with its projection $$\ul{X}^c_{\bar\zs}=(\bar\zt^*_P)_*(X^c_{\bar\zs}\,\big|_{\cL_0})$$
onto $\ul{\cL}_0$.

\medskip\noindent
A particularly interesting is the case of the canonical contact structures on first jet bundles of line bundles, so let us take a line bundle $\zt_0:L\to Q$, and $P=\sT^*(L^\ti)$ which carries the canonical contact structure of $\sJ^1(L^*)$, where $\zt^*_0:L^*\to Q$ is the line bundle dual to $L$. We have then
$$\bar P=\sT^*(L^\ti)\ti\sT^*\R=\sT^*\left(L^\ti\ti\R\right)=\sT^*(\bar L^\ti)\,,$$
where $\bar L$ is the line bundle
$$\bar\zt_0=(\zt_0,\id_\R):\bar L=L\ti\R\to Q\ti\R\,,$$
so that $\bar\zt:\bar P\to\sJ^1(\bar L^*)$, where $\bar\zt^*_0:\bar L^*=L^*\ti\R\to Q\ti\R$ is the dual of $\bar L$. According to (\ref{fjs}),
$$\zt^*_P:L^*_P=\sJ^1(L^*)\ti_QL^*\to\sJ^1(L^*)\,,$$
so that $\sV(L^*_P)=L^*_P\ti_QL^*\to L^*_P$.
Note the canonical isomorphism
$$\Psi:\sJ^1(\bar L^*)\to\left(\sJ^1(L^*)\ti_QL^*\right)\ti\R=L^*_P\ti\R\,,\quad \Psi\left(\sj^1(\tilde S)(q,t)\right)=\left(\sj^1(S_t)(q),\frac{\pa S}{\pa t}(q,t),t\right)\,,$$
where $S_t(q)=S(q,t)$ is a section of $L^*$ and $\frac{\pa S}{\pa t}(q,t)\in\sV_{S(q,t)}L^*$ is the
vector tangent to $L^*$ at $S(q,t)$, represented by the vertical curve $S_q(t)=S(q,t)$ and identified with an element of $L^*_q$. Of course, $\sj^1$ refers the first jet of a section.
%In consequence, we have a canonical isomorphism
%$$L^*_{\bar P}=\sJ^1(\bar L^*)\ti_{(Q\ti\R)}\bar L^*\simeq\sJ^1(\bar L^*)\ti_QL^*\,.$$

\medskip\noindent Let us choose a time-dependent section
$$\zs:\sJ^1(L^*)\ti\R\to\sJ^1(L^*)\ti_QL^*=L^*_P$$
of $\zt^*_P$, and let $X^c_{\bar\zs}$ be the corresponding contact Hamiltonian vector field on $\sJ^1(\bar L^*)$. The section $\zs$ is represented by a time-dependent morphism $\zs_0:\sJ^1(L^*)\ti\R\to L^*$ of fiber bundles over $Q$ which covers the identity on $Q$,  $\zs(y,t)=(y,\zs_0(y,t))$. The autonomization
$$\bar\zs:\sJ^1(\bar L^*)=L^*_P\ti\R\to L^*_P\ti_Q L^*\,,\quad \bar\zs(w,t)=(w,\bar\zs_0(w,t))\,,$$
in turn, is represented by the morphism $\bar\zs_0$ of fiber bundles over $Q$,
$$\bar\zs_0:\sJ^1(\bar L^*)\to L^*\,,$$ which covers the identity on $Q$,
\be\label{ee1}\bar\zs_0\left(\sj^1(\tilde S)(q,t)\right)=\zs_0\bigg(\sj^1(S_t)(q),t\bigg)+\frac{\pa S}{\pa t}(q,t)\,.\ee
Any time-dependent section $S:Q\ti\R\to L^*$ (i.e., a section $\tilde S(q,t)=(S(q,t),t)$ of $\bar L^*$) generates a  Legendre submanifold $\cL_0(S)$ of $\sJ^1(\bar L^*)$ being the image of $\sj^1(\tilde S):Q\ti\R\to \sJ^1(\bar L^*)$,
$$\cL_0(S)=\sj^1(\tilde S)(Q\ti\R)\,.$$
Let $X^c_{\bar\zs}$ be the contact Hamiltonian vector field on $\sJ^1(\bar L^*)$ corresponding to the autonomization of the section $\zs$. Since the canonical projection $\zt^1(\bar L^*):\sJ^1(\bar L^*)\to Q\ti\R$ maps the submanifold $\cL_0(S)$ diffeomorphically onto $Q\ti\R$, it maps the vector field $X^c_{\bar\zs}$ along $\cL_0(S)$ onto a vector field $\ul{X}^c_{\bar\zs}$ on $Q\ti\R$. As now, after the autonomization, we are working with the autonomous case, the autonomous Jacobi Theorem \ref{cHJ1} implies the following \emph{nonautonomous contact Jacobi Theorem}.
\begin{theorem} The following are equivalent.
\begin{enumerate}
\item the map $\bar\zs$ vanishes on $\cL_0(S)$, i.e. (cf. (\ref{ee1})),
\be\label{najhje}
\zs_0\bigg(\sj^1(S_t),t\bigg)+\frac{\pa S}{\pa t}=0\,;
\ee
\item the contact Hamiltonian vector field $X^c_{\bar\zs}$ is tangent to the Legendre submanifold $\cL_0(S)$ of $\sJ^1(\bar L^*)$;
\item The vector fields $\ul{X}^c_{\bar\zs}$ and ${X}^c_{\bar\zs}$ are $\sj^1(\tilde S)$-related.
\end{enumerate}
\end{theorem}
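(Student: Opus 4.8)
The plan is to deduce the theorem directly from the autonomous contact Jacobi Theorem \ref{cHJ1}, applied to the autonomized data, after checking that the autonomization satisfies its hypotheses verbatim. First I would assemble the structural identifications. Since $\sT^*(A\ti B)\simeq\sT^*A\ti\sT^*B$ and $\bar L^\ti=L^\ti\ti\R$ (the $\R$-factor of $Q\ti\R$ being a base coordinate, so that the zero section of $\bar L$ is $0_L\ti\R$), we get $\bar P=\sT^*(\bar L^\ti)=\sT^*(L^\ti)\ti\sT^*\R=P\ti\sT^*\R$, and by Example \ref{pl} the phase lift of multiplication by reals on $\bar L^\ti$ is exactly the $\R^\ti$-action $\bar h$ used in the autonomization ($t$ not rescaled, its conjugate momentum $p$ rescaled by $s$). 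Hence $\bar P$ is canonically the symplectic cover of $\sJ^1(\bar L^*)$, so Theorem \ref{cHJ1} applies verbatim to the line bundle $\bar\zt_0:\bar L=L\ti\R\to Q\ti\R$. By the computation recalled in Section 5.1, the autonomization $\bar H_\zs(x,t,p)=H_\zs(x,t)+p$ is a $1$-homogeneous Hamiltonian on $\bar P$ equal to $H_{\bar\zs}$ for the section $\bar\zs$ of $L^*_{\bar P}$ represented by the fibre-bundle morphism $\bar\zs_0$ of (\ref{ee1}); and $\cL_0(S)=\sj^1(\tilde S)(Q\ti\R)$ is precisely the canonical Legendre submanifold of $\sJ^1(\bar L^*)$ attached to the section $\tilde S(q,t)=(S(q,t),t)$ of $\bar L^*$.

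With these identifications, Theorem \ref{cHJ1} applied to $(\bar L,\bar\zs,\tilde S)$ immediately yields the equivalence of the vanishing $\bar\zs\circ\sj^1(\tilde S)=0$ with item (3), that is, the $\sj^1(\tilde S)$-relatedness of $X^c_{\bar\zs}$ and $\ul{X}^c_{\bar\zs}=(\zt^1(\bar L^*))_*(X^c_{\bar\zs}\,\big|_{\cL_0(S)})$; and the general contact Jacobi Theorem \ref{cHJ}, applied to $\bar P$, $\cL_0(S)$ and $\bar\zs$, yields the equivalence of the same vanishing with item (2), the tangency of $X^c_{\bar\zs}$ to $\cL_0(S)$. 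It then only remains to unfold ``$\bar\zs$ vanishes on $\cL_0(S)$'' into (\ref{najhje}): since $\cL_0(S)$ is the image of $\sj^1(\tilde S)$, this vanishing is $\bar\zs_0\circ\sj^1(\tilde S)=0$, which by (\ref{ee1}) reads $\zs_0(\sj^1(S_t),t)+\pa S/\pa t=0$.

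The one step that genuinely requires care --- and which I expect to be essentially the only obstacle --- is the identity (\ref{ee1}) itself: one must verify that, under the canonical isomorphism $\Psi:\sJ^1(\bar L^*)\to L^*_P\ti\R$, the jet $\sj^1(\tilde S)(q,t)$ is carried to $\big(\sj^1(S_t)(q),\,\pa S/\pa t(q,t),\,t\big)$, so that the extra momentum coordinate created by the autonomization $\bar H_\zs=H_\zs+p$ corresponds precisely to the vertical vector $\pa S/\pa t\in\sV_{S(q,t)}L^*$. Everything else is bookkeeping with the identifications $\bar L^\ti=L^\ti\ti\R$ and $L^*_{\bar P}=\sV(L^*_P)\ti\R$ already set up in Sections 5.1 and 6.2; conceptually, the content is just the familiar fact that passing from the autonomous to the nonautonomous Hamilton--Jacobi equation amounts to adding $\pa S/\pa t$, here transported through the symplectic-cover picture.
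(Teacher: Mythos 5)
Your proposal is correct and follows essentially the same route as the paper: the authors also set up the identifications $\bar P=\sT^*(\bar L^\ti)$, the isomorphism $\Psi$ carrying $\sj^1(\tilde S)(q,t)$ to $\big(\sj^1(S_t)(q),\tfrac{\pa S}{\pa t}(q,t),t\big)$, and the formula (\ref{ee1}) for $\bar\zs_0$, and then simply invoke the autonomous contact Jacobi Theorem \ref{cHJ1} (together with Theorem \ref{cHJ}) for the autonomized data. You have correctly isolated the only nontrivial verification, namely that the extra jet coordinate under $\Psi$ is the vertical vector $\pa S/\pa t$, which is exactly the content the paper establishes before stating the theorem.
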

The PDE (\ref{najhje}) is our \emph{nonautonomous contact Hamilton-Jacobi equation}. For some particular $\zs_0$ we obtain various variants of Hamilton–Jacobi equations considered already in the literature, e.g. \emph{discounted Hamilton-Jacobi equations} which have been treated recently in \cite{Cannarsa:2020,Jin:2000}, where a relationship between the Herglotz variational principle and the Hamilton–Jacobi equation was established, and in \cite{Davini:2016,Davini:2021}, where the problem of the convergence of solutions was studied.
\begin{example}
For the trivial line bundle $L=\R\ti Q$ and $P=\sT^*(\Rt\ti Q)=\Rt\ti\R^*\ti\sT^*Q$, we have $L^*=\R^*\ti Q$, $\sJ^1(L^*)=\R^*\ti\sT^*Q$, $\bar L^*=\R^*\ti Q\ti\R$, $L^*_P=\sJ^1(L^*)\ti\R^*$, and $\sJ^1(\bar L^*)=\sJ^1(L^*)\ti\R^*\ti\R$. Starting from local coordinates $(q^i)$ on $Q$, we will use on the latter objects the coordinates $(z,q^i)$, $(z,q^i,p_j)$, $(z,q^i,t)$, $(z,q^i,p_j,p)$, and $(z,q^i,p_j,p,t)$, respectively. Moreover, we identify $L^*_P\ti\R$ with $\sJ^1(\bar L^*)$ as line bundles over $\sJ^1(L^*)\ti\R$. A time-dependent section $\zs$ of $L^*_P$ is represented by a function $H_0$ on $\sJ^1(L^*)\ti\R$,
$$\zs(z,q_i,p_j,t)=\left(z,q_i,p_j,H_0(z,q_i,p_j,t)\right)\,.$$
The autonomization $\bar\zs$ as a section of $\sV(L^*_P)=\sJ^1(L^*)\ti\R^*\ti\R^*$ is represented by the function $H=H_0+p$ on $\sJ^1(\bar L^*)=\sJ^1(L^*)\ti\R^*\ti\R$,
$$\bar\zs(z,q^i,p_j,p,t)=\left(z,q^i,p_j,p,t,H_0(z,q_i,p_j,t)+p\right)\,.$$
If we choose a Legendre submanifold in $\sJ^1(\bar L^*)$ to be $\sj^1(S)$ for a function
$S:Q\ti\R\to\R^*$, viewed as a section of $\bar L^*$, then the corresponding contact Hamilton-Jacobi equation takes the form
$$H_0\left(S(q,t),q_i,\frac{\pa S}{\pa q^j},t\right)+\frac{\pa S}{\pa t}=0\,.$$
In the case when $H_0(z,q_i,p_j,t)=H_1(q_i,p_j)+\zl\,t\,z$, we get
$$H_1\left(q_i,\frac{\pa S}{\pa q^j},t\right)+\zl\,S=0\,,$$
which is a time dependent version of the discounted Hamilton-Jacobi equation.
For a `free' Hamiltonian $H_1$ on $\sT^*Q$, $H_1(q^i,p_j)=\frac{1}{2}\sum_jp_j^2$, we get the nonlinear PDE
$$\zl\,S+\frac{1}{2}\sum_i\left(\frac{\pa S}{\pa q^i}\right)^2=0\,.$$
\end{example}
\section{Concluding remarks}
We proposed a novel approach to contact Hamiltonian mechanics which serves for contact structures of all kinds, contrary to the trend dominating in the existing literature to work exclusively with trivial (cooriented) contact structures. The Hamiltonian vector fields there are defined by means of Hamiltonians being functions on the contact manifold $M$ and the Reeb vector field for the given global contact form. This approach is non-geometric in the context of general (non-trivial) contact structures, since the obtained contact Hamiltonian vector fields strongly depend on the contact form chosen from an equivalence class. Our approach is completely intrinsic and free from this deficiency for the price that Hamiltonians live not on $M$ but on a principal bundle $P\to M$ with the structure group $\Rt=\GL(1,\R)$, equipped additionally with a homogeneous symplectic form. In this sense, contact geometry is not an odd-dimensional cousin of symplectic geometry, but a particular (homogeneous) symplectic geometry. This reminds the Kaluza-Klein theory, which is a classical unified field theory of gravitation and electromagnetism built around the idea of a fifth dimension beyond the common 4D of the space-time approach.

\medskip\noindent
We developed also a contact Hamilton-Jacobi theory which produces contact Hamilton-Jacobi equations which can be written in particulary compact and simple form in the case of canonical contact structures on the first jet bundles $\sJ^1(L)$ of sections of line bundles $L$. These contact structures can be characterized as linear and play in contact  mechanics a fundamental r\^ole, analogous to the r\^ole played by cotangent bundles in symplectic geometry. It is also interesting that particular cases of our contact Hamilton-Jacobi equations have been already studied in the literature as \emph{ad hoc} variants of the standard ones.

\medskip\noindent
The next step in our investigations in contact mechanics will be the Lagrangian side of the picture. We intend to include singular Lagrangians and follow therefore the powerful idea of \emph{Tulczyjew triple} \cite{Tulczyjew:1974,Tulczyjew:1977,Tulczyjew:1999} (some attempts in the case of \emph{extended tangent bundles} $\sT Q\ti\R$ one can find in \cite{Esen:2021}). Of course, non-trivial contact structures should be included, so one has to consider more sophisticated objects than just extended tangent bundles $\sT Q\ti\R$. As the approach \emph{via} Tulczyjew triples allow for immediate generalizations in the form of geometric mechanics on Lie algebroids, i.e., for linear Poisson structures (see e.g. \cite{Bruce:2015,Grabowska:2008,Grabowska:2011,Grabowska:2006,Grabowska:2016} for particle mechanics and \cite{Bruce:2016,Grabowska:2013,Grabowska:2015} for field theories), we believe that
an analogous theory can be developed for linear Jacobi structures. A relevant paper is in progress.

\small{\vskip1cm}

\noindent Katarzyna GRABOWSKA\\
Faculty of Physics \\
University of Warsaw\\
Pasteura 5,
02-093 Warszawa, Poland
\\Email: konieczn@fuw.edu.pl\\

\noindent Janusz GRABOWSKI\\ Institute of
Mathematics\\  Polish Academy of Sciences\\ \'Sniadeckich 8, 00-656 Warszawa, Poland
\\Email: jagrab@impan.pl \\

\end{document}